\documentclass[11pt]{article}

\usepackage{geometry}
\usepackage{amscd}                		
\usepackage{graphicx}			
\usepackage{amssymb}
\usepackage{amsmath}
\usepackage{amsthm}
\usepackage{amsfonts}
\usepackage{pb-diagram}
\usepackage{array}
\usepackage{multirow}
\usepackage{dcolumn}
\usepackage{enumerate}
\usepackage{comment}
\usepackage{multicol}
\usepackage{mathrsfs}
\usepackage[all]{xy}
\usepackage{bm}
\usepackage{stmaryrd}
\usepackage[abbrev]{amsrefs}

\geometry{a4paper}

\theoremstyle{plain}
\newtheorem{thm}{Theorem}[subsection]
\newtheorem{lem}[thm]{Lemma}
\newtheorem{claim}{Claim}
\newtheorem{prop}[thm]{Proposition}
\newtheorem{cor}[thm]{Corollary}

\theoremstyle{definition}
\newtheorem{dfn}[thm]{Definition}

\newtheorem{ex}[thm]{Example}

\newtheorem{rmk}[thm]{Remark}

\newcommand{\Aut}{\mathop{\mathrm{Aut}}\nolimits}

\newcommand{\nrd}{\mathop{\mathrm{nrd}}\nolimits}
\newcommand{\trd}{\mathop{\mathrm{trd}}\nolimits} 
\newcommand{\tr}{\mathop{\mathrm{tr}}\nolimits} 
\newcommand{\id}{\mathop{\mathrm{id}}\nolimits}
\newcommand{\chara}{\mathop{\mathrm{char}}\nolimits}
\newcommand{\rank}{\mathop{\mathrm{rank}}\nolimits}
\newcommand{\kernel}{\mathop{\mathrm{ker}}\nolimits}
\newcommand{\coker}{\mathop{\mathrm{coker}}\nolimits}
\newcommand{\Z}{\mathbb{Z}}
\def\Q{\mathbb{Q}}
\def\P{\mathbb{P}}
\newcommand{\R}{\mathbb{R}}
\newcommand{\C}{\mathbb{C}}

\newcommand{\re}[1]{\operatorname{Re}(#1)}
\newcommand{\im}[1]{\operatorname{Im}(#1)}
\newcommand{\brr}[1]{\bm{(}#1\bm{)}}
\newcommand{\bsr}[1]{\bm{[}#1\bm{)}}
\newcommand{\brs}[1]{\bm{(}#1\bm{]}}
\newcommand{\bss}[1]{\bm{[}#1\bm{]}}
\newcommand{\bs}{\backslash}

\newcommand{\quat}[2]{\left(\dfrac{#1}{#2}\right)}

\allowdisplaybreaks

\numberwithin{equation}{section}

\title{Geodesic continued fraction for Shimura curves and its periodicity: 
the case of $(2,3,7)$-triangle group
}
\author{Hohto Bekki}
\date{
\small\textit{
Department of Mathematics, Faculty of Science and Technology, Keio University, 3-14-1 Hiyoshi, Kohoku-ku, Yokohama, Kanagawa, 223-8522, Japan \\
Email: bekki@math.keio.ac.jp
}}

\begin{document}
\maketitle

\begin{abstract}
In this paper we study the geodesic continued fraction in the case of the Shimura curve coming from the $(2,3,7)$-triangle group. We construct a certain continued fraction expansion of real numbers using the so-called coding of the geodesics on the Shimura curve, and prove the Lagrange type periodicity theorem for the expansion which captures the fundamental relative units of quadratic extensions of $\Q(\cos(2\pi/7))$ with rank one relative unit groups. We also discuss the convergence of these continued fractions.

\vspace{1mm}
\noindent
{\textit{Keywords}: continued fraction; Shimura curve; (2,3,7)-triangle group; Lagrange's theorem; unit group}

\noindent
{\textit{2020 Mathematics Subject Classification}: 11A55, 11R27.}
\end{abstract}


\section{Introduction}\label{intro}

Let $\eta:= 2 \cos \left(\frac{2\pi}{7}\right) \in \R$ be the unique positive root of $X^3+X^2-2X-1$. 
The aim of this paper is to present a geometric construction of the continued fraction expansion such as
\begin{align}
 &(1-\eta ^2)\sqrt{\eta} +\sqrt{1+3 \eta -2\eta^2} \nonumber  \\
 &= (1-\eta ^2)\sqrt{\eta} + \eta^2-\eta + \cfrac{2(1+2 \eta -2\eta^2)}
 {\eta^2-\eta+ \cfrac{1+2 \eta -2\eta^2}
 {\eta^2-\eta+\cfrac{1+2 \eta -2\eta^2}
 {\eta^2-\eta+\cfrac{1+2 \eta -2\eta^2}{ \cdots {\text (periodic)}}}}} \label{eqn ex1}
\end{align}
with the Lagrange type periodicity property (Theorem \ref{lagrange closed geod} and Theorem \ref{thm lagrange}). 
Here the term $(1-\eta ^2)\sqrt{\eta}$ on the both sides should not be deleted in order for this continued fraction expansion to have the natural geometric meaning. See Example \ref{ex1}. 

The classical Lagrange theorem in the theory of continued fraction says that the continued fraction expansion of a given real number becomes periodic if and only if the number is a real quadratic irrational, and that the period of the continued fraction expansion describes the fundamental unit of the associated order in the real quadratic field. 
Analogously, we know that a given geodesic on the upper-half plane $\mathfrak h$ becomes a closed geodesic (``periodic'') on the modular curve $SL_2(\Z)\bs \mathfrak h$ if and only if the two end points of the geodesic are conjugate real quadratic irrationals, and that the length of the closed geodesic becomes the regulator of the associated order in the real quadratic field. Cf. Theorem \ref{lagrange1}.

Our motivation for this study is to extend the Lagrange theorem to number fields other than real quadratic fields based on this geometric analogue of the Lagrange theorem. 
In a previous paper \cite{bekki17}, based on this analogy and inspired by the works of Artin~\cite{artin24}, Sarnak~\cite{sarnak82}, Series~\cite{series85}, Katok~\cite{katok96}, Lagarias~\cite{lagarias94}, Beukers~\cite{beukers14}, etc., we have studied the geodesic multi-dimensional continued fraction and its periodicity using the geodesics on the locally symmetric space of $SL_n$. 
As a result we have established a geodesic multi-dimensional continued fraction and a $p$-adelic continued fraction with the Lagrange type periodicity theorems in the case of extensions $E/F$ of number fields with rank one relative unit group, and in the case of imaginary quadratic fields with rank one $p$-unit group, respectively. 
Recently the author found that Vulakh~\cite{vulakh02}, \cite{vulakh04} had also used the same idea to compute the fundamental units of some families of number fields.

In this paper we mainly study the case of the Shimura curve coming from the $(2,3,7)$-triangle group $\Delta(2,3,7)$. 
We construct a continued fraction expansion using the geodesics on the Shimura curve $\Delta(2,3,7)\bs \mathfrak h$, and prove the Lagrange type periodicity theorem for quadratic extensions $K/\Q(\cos(2\pi/7))$ with rank one relative unit groups. 
We refer to such an extension the ``relative rank one'' extension. 
Although these number fields can already be treated in the previous paper \cite{bekki17}, a major difference is that we can actually expand numbers in the form of ``continued fraction'' as in (\ref{eqn ex1}), while the geodesic multi-dimensional continued fraction in \cite{bekki17} only gives a sequence of matrices in $SL_n(\mathbb Z)$. 

The idea of considering the geodesic continued fraction for some arithmetic Fuchsian groups is briefly discussed in \cite{katok86} by Katok. In the final sections (Remark and Examples) of \cite{katok86}, Katok considers the arithmetic groups coming from quaternion algebras over $\Q$, and gives some examples of the periodic geodesic continued fraction expansions (Katok calls this the ``code'') of closed geodesics. 
Some of the essential ideas in this paper are generalization of Katok's ideas to our case of quaternion algebras over totally real fields $F$, especially $\Q(\cos(2\pi/7))$.

Our strategy is as follows. First we extend the geometric analogue of the Lagrange theorem (Theorem \ref{lagrange1}) to Shimura curves (Proposition \ref{geodesic lagrange}). We treat a general Shimura curve, not only the one coming from the $(2,3,7)$-triangle group, since the argument does not change too much. 
Then we restrict ourselves to the Shimura curve $\Delta(2,3,7)\bs \mathfrak h$ and construct the continued fraction expansion which can detect the relative units of the relative rank one quadratic extensions of $\Q(\cos(2\pi/7))$ as the periods of the continued fraction expansion. 

For this purpose, we use the technique called the geodesic continued fraction studied by Series~\cite{series85}, Katok~\cite{katok96}, etc. in the field of reduction theory, dynamical systems, etc. (Some authors refer to the geodesic continued fraction also as the cutting sequence or the Morse coding.)
In order to obtain a convergent continued fraction expansion which is similar to the classical one such as (\ref{eqn ex1}), we slightly modify the original algorithm by considering the regular geodesic heptagon which is the union of some copies of the fundamental domain. 
Here we use the generator of $\Delta(2,3,7)$ given by Elkies~\cite{elkies98}, Katz-Schaps-Vishne~\cite{katzschapsvishne11}.  See Figure \ref{FD(2,3,7)} and Definition \ref{GCF algo}. 
Then we consider the formal continued fraction expansion (\ref{CF}) associated to the geodesic continued fraction, and discuss its convergence. In fact, although the traditional $k$-th convergent of (\ref{CF}) does not converge in general, we show that there is a natural regularization of the $k$-th convergent and prove its convergence. See Theorem \ref{thm convergence}, Corollary \ref{cor convergence} and Corollary \ref{conv cor2}.  

Finally we study the Lagrange type periodicity of our continued fraction expansion. 
We prove two versions of the periodicity theorem: the first version Theorem \ref{lagrange closed geod} is about the closed geodesics on the Shimura curve $\Delta(2,3,7)\bs \mathfrak h$, and the second refined version Theorem \ref{thm lagrange} is about geodesics not necessarily closed on $\Delta(2,3,7)\bs \mathfrak h$. 
Note that it is a well known fact that the geodesic continued fraction (or the cutting sequence/Morse code) of a ``generic'' geodesic becomes periodic if and only if the geodesic becomes closed geodesic on the quotient space. Cf. Katok-Ugarcovici~\cite{katokugarcovici07}*{p.94}. 
Therefore, (1) of Theorem \ref{lagrange closed geod}, i.e., the periodicity of the geodesic continued fraction expansion itself, follows naturally from Proposition \ref{geodesic lagrange} and this fact. 
On the other hand, we need more argument for the latter part of Theorem \ref{lagrange closed geod} about the fundamental unit. 
Actually, we have to take care about the vertices of the fundamental domain in order to obtain the fundamental units as a minimal period of the continued fraction expansions.   
We also need some delicate arguments for the periodicity in Theorem \ref{thm lagrange}.  
See also Remark \ref{rmk beta free}.

\paragraph{Remark on some relevant preceding studies}

There are many literatures which study the closed geodesics on hyperbolic surfaces (not necessarily Shimura curves) using the periodic geodesic continued fractions or the cutting sequences/Morse codes. 

For example, in \cite{series86}, \cite{katok86}, \cite{abramskatok19}, Series, Katok and Abrams-Katok study the reduction theory for the general Fuchsian groups or the symbolic dynamics associated to the geodesic flows on hyperbolic sufaces using the cutting sequence for (certain classes of) Fuchsian groups.  
As we have mentioned above, in \cite{katok86}, Katok considers the case where the Fuchsian group is coming from the quaternion algebras over $\Q$. 
Some new features in this paper are to give the continued fraction expression such as (\ref{CF}) and to establish an explicit correspondence between the period of continued fraction expansions and the fundamental relative unit of certain quadratic extensions over the totally real field $\Q(\cos(2\pi/7))$. 

As another example, in \cite{vogeler03}, Vogeler studies the closed geodesics on the Hurwitz surface ($= \Delta(2,3,7)\bs \mathfrak h$). He associates each ``edge path'' the hyperbolic element in the $(2,3,7)$-triangle group $\Delta(2,3,7)$, and hence the geodesic on $\mathfrak h$ which becomes closed on $\Delta(2,3,7)\bs \mathfrak h$. On the other hand the ``edge path'' admits a very simple combinatorial description using the words consisting of $R$ and $L$. Using this correspondence, he combinatorially studies the length spectra of closed geodesics on $\Delta(2,3,7)\bs \mathfrak h$. 
In this paper, we basically study the reverse direction, that is, we input the geodesics (not necessarily closed) and output the $RL$-sequences or the hyperbolic elements if the geodesic continued fraction becomes periodic, and discuss its relation to unit groups of the quadratic extension of $\Q(\cos(2\pi/7))$.

%

To sum up, the upshots of this paper are 
\begin{enumerate}[(1)]
\item to give a new explicit presentation of any real number as a ``convergent'' continued fraction such as (\ref{eqn ex1}) or (\ref{CF}) which becomes periodic if and only if the number is a certain algebraic number, and 
\item to establish the correspondence between the periods of such continued fractions and the fundamental relative units of the ``relative rank one'' extensions of $\Q(\cos(2\pi/7))$, 
\end{enumerate}
by using the arithmetic and geometric properties of the Shimura curve $\Delta(2,3,7)\bs \mathfrak h$. 

\section{Preliminaries on Shimura curves}\label{section shimura curve}

Let $F$ be a totally real number field of degree $d \geq 1$ and let $\mathcal O_F$ be the ring of integers of $F$. We denote by $\sigma_1,\dots, \sigma_d$ the set of archimedean places of $F$. We also denote by  $\sigma_i: F\hookrightarrow F_{\sigma_i}:=\R$ $(1\leq i\leq d)$ the completion map of $F$ at $\sigma_i$. Let $A$ be a quaternion algebra over $F$ and let $\mathcal O \subset A$ be a maximal order, i.e., an $\mathcal O_F$-subalgebra of $A$ which is finitely generated as an $\mathcal O_F$-module such that $\mathcal O \otimes_{\mathcal O_F} F=A$, and not properly contained in any other such $\mathcal O_F$-subalgebra. 
We denote by $\mathcal O^1:=\{x\in \mathcal O^{\times} \mid \nrd (x)=1\}$ the group of reduced norm one units in $\mathcal O$, where $\nrd: A\rightarrow F$ is the reduced norm on $A$. 
 Suppose that $A$ is unramified at $\sigma_1$ and ramified at $\sigma_2, \dots, \sigma_d$, i.e., $A\otimes_F F_{\sigma_1}\simeq M_2(F_{\sigma_1})$ and $A\otimes_F F_{\sigma_i}\simeq \mathbb H$ (the Hamilton quaternion) for $i=2, \dots, d$. Let us fix such an isomorphism (as $F_{\sigma_1}$-algebras)
\begin{align}\label{iota}
\iota :A\otimes_F F_{\sigma_1}\overset{\sim}{\rightarrow} M_2(F_{\sigma_1})=M_2(\R),
\end{align}
and set $\Gamma_{\mathcal O} := \iota(\mathcal O^1)$. 
By the definition of the reduced norm, $\Gamma_{\mathcal O}$ is a subgroup of $SL_2(\R)$ and acts on the upper-half plane by the linear fractional transformation.

To be precise, we denote by $\mathfrak h:=\{z=x+\sqrt{-1}y \in \C \mid \im z=y>0\} \subset \C$ the upper-half plane. We naturally embed $\mathfrak h$ into $\P^1(\C) :=\C \cup \{\infty\}$, the complex projective line with the usual topology as a manifold. Then the boundary $\partial \mathfrak h$ of $\mathfrak h$ in $\P^1(\C)$ becomes $\P^1(\R):=\R \cup \{\infty\}$, and we denote by $\overline{\mathfrak h}:= \mathfrak h \cup \P^1(\R)$ the compactified upper-half plane in $\P^1(\C)$. 
The group $GL_2(\R)$ acts on $\P^1(\C)$ by the linear fractional transformation:
\begin{align}
\gamma z =\frac{az+b}{cz+d}  \quad \text{ for} \quad 
\gamma=
\begin{pmatrix}
a & b \\
c & d \\
\end{pmatrix}
\in GL_2(\R), z \in \P^1(\C),
\end{align}
and the action of $SL_2(\R)$ preserves $\mathfrak h$ and $\P^1(\R)$. Thus $SL_2(\R)$ also acts on $\mathfrak h$ and $\P^1(\R)$ by the linear fractional transformation. 
We also equip $\mathfrak h$ with the Poincar\'e metric $ds^2=\frac{dx^2+dy^2}{y^2}$ ($z=x+\sqrt{-1}y \in \mathfrak h$). The action of $SL_2(\R)$ on $\mathfrak h$ is preserves this metric, and hence preserves the geodesics on $\mathfrak h$.

Then it is known that $\Gamma_{\mathcal O}$ acts properly discontinuously on $\mathfrak h$, and the quotient space $\Gamma_{\mathcal O}\bs \mathfrak h$ has a canonical structure of an algebraic curve over $\overline{\Q}$. See Shimura~\cite{shimura67}. Algebraic curves obtained in this way are called the \textit{Shimura curves} (of level 1).

In the following, for $\alpha, \beta \in \P^1(\R)=\R\cup \{\infty\}$ such that $\alpha\neq \beta$, we mean by {\it the oriented geodesic on $\mathfrak h$ joining $\beta$ to $\alpha$} the geodesic on $\mathfrak h$ joining $\alpha$ and $\beta$ equipped with the orientation from $\beta$ to $\alpha$, and denote by $\varpi_{\beta \to \alpha}$.

\subsection{The modular curve}\label{sect modular curve}
In this subsection we recall the case of the modular curve $SL_2(\Z)\bs \mathfrak h$ as an example of Shimura curve and  explain the geometric interpretation of the Lagrange theorem which is the key idea for our generalization of the Lagrange theorem. 

We consider the case where $F=\Q, A=M_2(\Q)$ and $\mathcal O= M_2(\Z) \subset A$. We choose the canonical base change isomorphism $\iota=\id : M_2(\Q)\otimes \R \simeq M_2(\R)$ as an identification (\ref{iota}). Then we have $\Gamma_{\mathcal O}= SL_2(\Z)$ and the resulting Shimura curve $\Gamma_{\mathcal O}\bs \mathfrak h=SL_2(\Z) \bs \mathfrak h$ is the classical modular curve. 
Then the following is known about the closed geodesics on the modular curve $SL_2(\Z) \bs \mathfrak h$. See Sarnak~\cite{sarnak82} for example.

\begin{thm}[The geodesic Lagrange theorem]\label{lagrange1}~
\begin{enumerate}[$(1)$]
\item Let $\alpha, \beta \in \R \cup \{\infty\} =\partial \mathfrak h$ such that $\alpha \neq \beta$, and let $\varpi$ be the oriented geodesic on the upper-half plane $\mathfrak h$ joining $\beta$ to $\alpha$. We denote by $\overline{\varpi}$ the projection of the geodesic $\varpi$ on the modular curve. The following conditions are equivalent.
\begin{enumerate}[{\rm (i)}]
\item The projected geodesic $\overline{\varpi}$ becomes a closed geodesic, i.e., $\overline{\varpi}$ has a compact image in $SL_2(\Z)\bs \mathfrak h$.
\item There exists a hyperbolic element $\gamma \in SL_2(\Z)$ (i.e., $\gamma$ has two distinct real eigenvalues) such that $\gamma \varpi=\varpi$, i.e., $\gamma \alpha=\alpha$ and $\gamma \beta =\beta$.
\item The end points $\alpha,\beta$ are real quadratic irrationals conjugate to each other over $\Q$. 
\end{enumerate}
\item Suppose that the above conditions are satisfied. 
Let $\Gamma_{\varpi}:=\{\gamma \in SL_2(\Z) \mid \gamma \varpi=\varpi\}$ be the stabilizer subgroup of $\varpi$ in $SL_2(\Z)$, and define an order $\mathcal O_{\alpha}$ in the real quadratic field $\Q(\alpha)$ by
$\mathcal O_{\alpha}:=\{x \in \Q(\alpha) \mid x(\Z\alpha+\Z) \subset (\Z\alpha+\Z)\}$.
We denote by $\mathcal O_{\alpha}^1:=\{x \in \mathcal O_{\alpha}^{\times}\mid N_{\Q(\alpha)/\Q}(x)=1\}$ the group of norm one units. Then the following natural map is an isomorphism of groups.
\begin{align}
\Gamma_{\varpi} \overset{\sim}{\rightarrow} \mathcal O_{\alpha}^1; 
\begin{pmatrix}
a&b\\
c&d\\
\end{pmatrix}
\mapsto c\alpha+d.
\end{align}
\end{enumerate}
\end{thm}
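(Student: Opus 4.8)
The plan is to prove the three equivalences first and then the group isomorphism, following the classical pattern but keeping everything inside $SL_2(\Z)$. For the cycle (i) $\Rightarrow$ (ii) $\Rightarrow$ (iii) $\Rightarrow$ (i), the cleanest starting point is (iii) $\Rightarrow$ (ii): if $\alpha$ is a real quadratic irrational and $\beta$ its conjugate, then the order $\mathcal{O}_\alpha$ has a nontrivial norm-one unit of infinite order (Dirichlet's unit theorem for real quadratic orders), and multiplication by such a unit on the lattice $\Z\alpha+\Z$ gives a matrix $\gamma\in SL_2(\Z)$ fixing $\alpha$; since $\gamma$ also has the Galois conjugate eigenvalue it fixes $\beta$ too, and having two distinct real eigenvalues it is hyperbolic, so $\gamma\varpi=\varpi$. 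For (ii) $\Rightarrow$ (iii): a hyperbolic $\gamma\in SL_2(\Z)$ has irrational eigenvalues lying in a real quadratic field, its two fixed points on $\P^1(\R)$ are exactly the slopes of the two eigenlines, and these are quadratic irrationals conjugate over $\Q$ because $\gamma$ has integer (hence $\Q$-rational) entries; since $\gamma\varpi=\varpi$ forces $\{\alpha,\beta\}$ to be this fixed-point pair, we are done.

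The analytic heart is the geometric step (i) $\Leftrightarrow$ (ii), and this is where I expect the main work. For (ii) $\Rightarrow$ (i): if $\gamma$ is hyperbolic with $\gamma\varpi=\varpi$, conjugate $\gamma$ in $SL_2(\R)$ to a diagonal matrix $\mathrm{diag}(\lambda,\lambda^{-1})$; then $\varpi$ becomes the imaginary axis, $\gamma$ acts on it by a fixed hyperbolic translation of length $2\log|\lambda|$, and the segment of $\varpi$ between a point $z_0$ and $\gamma z_0$ is a compact fundamental domain for $\langle\gamma\rangle$ acting on $\varpi$, so its image in $SL_2(\Z)\bs\mathfrak h$ is the continuous image of a compact set, hence compact — i.e. $\overline\varpi$ is closed. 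For the converse (i) $\Rightarrow$ (ii), which is the genuinely harder direction: assuming $\overline\varpi$ has compact image, one shows the geodesic must return to itself. The standard argument is that the unit-tangent lift of $\varpi$ in $SL_2(\Z)\bs SL_2(\R)$ is a geodesic-flow orbit contained in a compact set; either it is periodic — giving directly an element $\gamma\in SL_2(\Z)$ with $\gamma\varpi=\varpi$, necessarily hyperbolic since it translates along $\varpi$ — or its closure is a nontrivial minimal set, which one rules out using that $\varpi$ has two distinct endpoints in $\P^1(\R)$ (a geodesic that is not eventually periodic and stays in a compact part of the modular surface would have to accumulate, but the discreteness of $SL_2(\Z)$ and properness of the action force a returning self-intersection that produces the stabilizing $\gamma$). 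Alternatively, and perhaps more elementarily for this paper, one invokes that on $SL_2(\Z)\bs\mathfrak h$ a geodesic has compact image iff it avoids every cusp neighborhood in both directions iff both endpoints are badly approximable, which forces them to be quadratic irrationals — but the self-map argument is cleaner to state.

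For part (2), assume the equivalent conditions hold and fix the quadratic irrational $\alpha$ with conjugate $\beta$. First I would check the map $\Gamma_\varpi\to\Q(\alpha)$, $\left(\begin{smallmatrix}a&b\\c&d\end{smallmatrix}\right)\mapsto c\alpha+d$, is well-defined into $\mathcal{O}_\alpha^1$: if $\gamma\alpha=\alpha$ then $\gamma\binom{\alpha}{1}=(c\alpha+d)\binom{\alpha}{1}$, so $c\alpha+d$ is an eigenvalue of $\gamma$, its norm down to $\Q$ equals $\det\gamma=1$, and multiplication by $c\alpha+d$ preserves $\Z\alpha+\Z$ because it is realized by the integer matrix $\gamma$ acting on the basis $\{\alpha,1\}$ — so $c\alpha+d\in\mathcal{O}_\alpha^1$. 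It is a group homomorphism because composing the matrices corresponds to multiplying the eigenvalues on the common eigenvector $\binom{\alpha}{1}$. Injectivity: if $c\alpha+d=1$ then $\gamma$ fixes $\binom{\alpha}{1}$, and since $\gamma$ is either identity or hyperbolic (it lies in $\Gamma_\varpi$, which consists of elements fixing both $\alpha$ and $\beta$) the other eigenvalue is also $1$, forcing $\gamma=I$. Surjectivity: given $u\in\mathcal{O}_\alpha^1$, multiplication by $u$ on the basis $\{\alpha,1\}$ of $\Z\alpha+\Z$ gives an integer matrix $\gamma$ with $\det\gamma=N_{\Q(\alpha)/\Q}(u)=1$, and since $\gamma\binom{\alpha}{1}=u\binom{\alpha}{1}$ we get $\gamma\alpha=\alpha$; its conjugate eigenvector $\binom{\beta}{1}$ shows $\gamma\beta=\beta$, so $\gamma\in\Gamma_\varpi$ and maps to $u$. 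The only subtlety is confirming that $\Gamma_\varpi$ contains no nontrivial elliptic or parabolic elements — elliptic is excluded because such an element has a fixed point in $\mathfrak h$ not on $\P^1(\R)$, incompatible with fixing both distinct boundary points $\alpha,\beta$, and parabolic is excluded because a parabolic fixes only one point of $\P^1(\R)$; so every element of $\Gamma_\varpi\setminus\{I\}$ is hyperbolic, which is exactly what the injectivity argument used.
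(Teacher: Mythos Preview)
The paper does not supply its own proof of this theorem: it is stated as a known classical result with a reference to Sarnak, so your proposal is filling in what the paper takes for granted rather than competing against a written argument.

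Your outline is essentially the standard one and is correct. Two remarks:

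For (i) $\Leftrightarrow$ (ii), the paper's own generalization to Shimura curves (Proposition~\ref{geodesic lagrange}) simply declares this equivalence ``clear,'' so your level of detail already exceeds the paper's. That said, your (i) $\Rightarrow$ (ii) sketch has a soft spot: ``closed geodesic'' in condition (i) must be read as \emph{periodic}, not merely ``has compact image as a point set.'' Indeed, for the division-algebra Shimura curves treated later the quotient is compact, so every geodesic has compact image while most are not periodic; thus ``compact image'' cannot be the operative condition. Once (i) is read as periodicity, (i) $\Rightarrow$ (ii) is immediate: a period of the projected geodesic lifts to a hyperbolic $\gamma\in SL_2(\Z)$ with $\gamma\varpi=\varpi$. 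Your attempt to rule out non-periodic compact orbits via ``two distinct endpoints'' does not work (every geodesic has two distinct endpoints), and the alternative via badly-approximable endpoints gives only boundedness, not periodicity. So it is cleanest just to interpret (i) correctly.

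For (ii) $\Leftrightarrow$ (iii) and for part (2), your argument via the eigenvector $\binom{\alpha}{1}$ and the order $\mathcal O_\alpha$ is correct and matches exactly the approach the paper takes in its Shimura-curve analogue, Lemma~\ref{lem ass field}: the map $\rho_\alpha$ defined there is precisely your $\gamma\mapsto c\alpha+d$, and the existence of a non-torsion norm-one unit (Dirichlet) is what drives (iii) $\Rightarrow$ (ii) in both your argument and the paper's proof of Proposition~\ref{geodesic lagrange}. One small sharpening of your injectivity step: $c\alpha+d=1$ with $c,d\in\Z$ and $\alpha$ irrational forces $c=0$, $d=1$ directly, whence $a=1$, $b=0$ from $\gamma\alpha=\alpha$; this avoids appealing to the eigenvalue structure.
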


Recall that the classical Lagrange theorem says that the continued fraction expansion of a real number $\alpha$ becomes periodic if and only if $\alpha$ is a real quadratic irrational, and we can actually compute the fundamental unit of $\mathcal O_{\alpha}$ from the period of continued fraction expansion of $\alpha$. Therefore, Theorem \ref{lagrange1} can be seen as a geometric interpretation of the Lagrange theorem.

In the following we first extend Theorem \ref{lagrange1} to the Shimura curves $\Gamma_{\mathcal O}\bs \mathfrak h$ explicitly (Proposition \ref{geodesic lagrange}). Then we restrict ourselves to the special case where $\Gamma_{\mathcal O}$ becomes the so called $(2,3,7)$-triangle group, and construct our continued fraction explicitly. We first discuss the convergence of the continued fraction expansion, and then deduce the Lagrange type periodicity theorem from Proposition \ref{geodesic lagrange}.

\subsection{Closed geodesics on Shimura curves}\label{subsection closed geodesics}
Now we return to the general setting and use the notations in the beginning of Section \ref{section shimura curve}, i.e., $F$ is a totally real field of degree $d$, $A$ is a quaternion algebra over $F$ such that $A\otimes_{\Q}\R\simeq M_2(\R) \times \mathbb H^{d-1}$, $\mathcal O\subset A$ is a maximal order of $A$, etc. In the following we also assume that $A \not\simeq M_2(F)$ (hence $A$ is a division algebra), since the case where $A\simeq M_2(F)$ has already explained in the previous subsection. (Note that if $A\simeq M_2(F)$ then $F$ must be $\Q$ by the assumption $A\otimes_{\Q}\R\simeq M_2(\R) \times \mathbb H^{d-1}$.)

For simplicity, we regard $F$ as a subfield of $\R$ via the embedding $\sigma_1:F \hookrightarrow F_{\sigma_1}=\R$.
In order to extend Theorem \ref{lagrange1} we fix the identification $\iota: A\otimes_{F}\R \overset{\sim}{\rightarrow} M_2(\R)$ (\ref{iota}) explicitly as follows. Since $\chara F=0\neq 2$, the quaternion algebra $A$ is isomorphic to $\quat{a,b}{F}$ for some $a,b \in F^{\times}$. Here $\quat{a,b}{F}$ is the quaternion algebra generated by the basis $1, i, j, k$ of the following form.  
\begin{align}
&\quat{a,b}{F}=F+Fi+Fj+Fk \\
&~~ i^2=a, j^2=b, ij=-ji=k.
\end{align}
By the assumptions $A\not\simeq M_2(F)$ and $A\otimes_F \R\simeq M_2(\R)$, we have $a,b \notin (F^{\times})^2$ and we may assume $a,b>0$. We take a splitting field $L:=F(\sqrt{b})\subset \R$. For $z \in L$ we denote by $\bar{z}$ the conjugate of $z$ over $F$, i.e.,
\begin{align}
\bar{~}:L\rightarrow L; z=x+y\sqrt{b} \mapsto \bar{z} = x-y\sqrt{b} \quad (x,y \in F).
\end{align}
Then we have an embedding
\begin{align}
\iota: A\simeq \quat{a,b}{F} \hookrightarrow M_2(L)\subset M_2(\R); 
\begin{cases}
1 &\mapsto 
\begin{pmatrix}
1&0\\
0&1\\
\end{pmatrix}\vspace{1mm}\\
i &\mapsto 
\begin{pmatrix}
0&a\\
1&0\\
\end{pmatrix}\vspace{1mm}\\
j &\mapsto 
\begin{pmatrix}
\sqrt{b}&0\\
0&-\sqrt{b}\\
\end{pmatrix}
\end{cases}\label{iota2}
\end{align}
as $F$-algebras which induces an isomorphism $\iota: A\otimes_F \R \overset{\sim}{\rightarrow}M_2(\R)$. Note that the image of $A$ under $\iota$ can be described as follows:
\begin{align}\label{image of A}
\iota: A \overset{\sim}{\rightarrow} \left\{ \left.
\begin{pmatrix}
z&a\bar{w}\\
w&\bar{z}
\end{pmatrix}\right|
z,w \in L
\right\} \subset M_2(L).
\end{align}
In the following we regard $A$ as a subalgebra of $M_2(L)$ via this identification. 
Then the reduced norm and the reduced trace on $A$ is nothing but the restriction of the determinant and the trace on $M_2(L)$ respectively, i.e., 
\begin{align}
\nrd=\det: &A \rightarrow F; 
\begin{pmatrix}
z&a\bar{w}\\
w&\bar{z}
\end{pmatrix}
\mapsto z\bar z -aw\bar w, \\
\trd=\tr: &A \rightarrow F; 
\begin{pmatrix}
z&a\bar{w}\\
w&\bar{z}
\end{pmatrix}
\mapsto z + \bar z.
\end{align}

Now let $\alpha, \beta \in \R\cup \{\infty\} =\partial \mathfrak h$ such that $\alpha \neq \beta$, and let $\varpi_{\beta \to \alpha}$ be the oriented geodesic on $\mathfrak h$ joining $\beta$ to $\alpha$. We denote by $\overline{\varpi}_{\beta \to \alpha}$ the projection of $\varpi_{\beta \to \alpha}$ on the Shimura curve $\Gamma_{\mathcal O}\bs \mathfrak h$. Let 
\begin{align}
\Gamma_{\varpi_{\beta \to \alpha}}:=\{\gamma\in \Gamma_{\mathcal O}\mid \gamma \varpi_{\beta\to\alpha}= \varpi_{\beta\to\alpha} \text{, i.e., } \gamma \alpha=\alpha \text{ and } \gamma \beta =\beta\}
\end{align}
be the stabilizer subgroup of $ \varpi_{\beta\to\alpha}$ in $\Gamma_{\mathcal O}$. 
We recall the following elementary fact. 

\begin{lem}\label{lem hyp}
An element $\gamma \in \Gamma_{\varpi_{\beta \to \alpha}}$ is a hyperbolic element (i.e., an element with distinct real eigenvalues) if and only if $\gamma \neq \pm 1$.
\end{lem}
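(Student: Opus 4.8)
The plan is to prove both directions of the equivalence. One direction is immediate: if $\gamma$ is hyperbolic then $\gamma$ has two distinct real eigenvalues, so $\gamma$ is not a scalar matrix, hence $\gamma \neq \pm 1$. (Strictly, one should note $\gamma \in SL_2(\R)$ so the only scalars available are $\pm 1$.)

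For the converse, suppose $\gamma \in \Gamma_{\varpi_{\beta \to \alpha}}$ with $\gamma \neq \pm 1$; I want to show $\gamma$ is hyperbolic. The key input is that $\gamma$ fixes the two distinct boundary points $\alpha, \beta \in \P^1(\R)$. First I would handle the degenerate issue that one of $\alpha,\beta$ might be $\infty$: since $SL_2(\Z) \subset SL_2(\R)$ is conjugate-transitive enough, or more simply since we may move $\varpi_{\beta \to \alpha}$ by an element of $SL_2(\R)$ (which does not change the conjugacy type of $\gamma$, only conjugates it), I reduce to the case $\alpha, \beta \in \R$. Then the fixed-point equation $\gamma z = z$ for $\gamma = \begin{pmatrix} a & b \\ c & d \end{pmatrix}$ reads $cz^2 + (d-a)z - b = 0$, a polynomial of degree $\leq 2$ with the two distinct roots $\alpha \neq \beta$. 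Hence the polynomial is genuinely quadratic (so $c \neq 0$) and $\alpha, \beta$ are exactly its two roots. The eigenvalues of $\gamma$ are then real and distinct precisely when $\gamma$ acts with two distinct fixed points on $\P^1(\R)$, which we have; equivalently, $(\tr \gamma)^2 - 4\det\gamma = (\tr\gamma)^2 - 4 = (a-d)^2 + 4bc > 0$ because the quadratic $cz^2 + (d-a)z - b$ has distinct real roots and its discriminant is exactly $(d-a)^2 + 4bc$. So $|\tr\gamma| > 2$, which is the definition of $\gamma$ being hyperbolic.

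An alternative, perhaps cleaner, route avoids the reduction step: classify $\gamma \in SL_2(\R)$ by its number of fixed points on $\overline{\mathfrak h} = \mathfrak h \cup \P^1(\R)$. An element of $SL_2(\R)$ other than $\pm 1$ is elliptic (one fixed point in $\mathfrak h$, none on the boundary, $|\tr| < 2$), parabolic (one fixed point on $\P^1(\R)$, $|\tr| = 2$), or hyperbolic (two fixed points on $\P^1(\R)$, $|\tr| > 2$); this trichotomy is standard. Since $\gamma$ fixes the two distinct boundary points $\alpha \neq \beta$, it cannot be elliptic or parabolic, hence it is hyperbolic (and $\gamma \neq \pm 1$ is forced, giving the other direction for free as well).

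I expect no real obstacle here — this is the standard classification of isometries of the hyperbolic plane. The only point requiring a line of care is making sure that $\infty$ as a possible endpoint is handled, and that one does not conflate "$\gamma$ is a scalar in $M_2(\R)$" with "$\gamma = \pm 1$" without invoking $\det\gamma = 1$; both are routine.
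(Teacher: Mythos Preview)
Your proposal is correct. Both your routes and the paper's proof rest on the same underlying fact---an element of $SL_2(\R)$ with two distinct fixed points on $\P^1(\R)$ must be hyperbolic unless it is $\pm 1$---but the executions differ slightly. The paper argues directly via eigenvectors: the two distinct fixed points $\alpha,\beta \in \P^1(\R)$ correspond to two linearly independent eigenlines in $\R^2$, so $\gamma$ is diagonalizable over $\R$; if the two (real) eigenvalues coincide then $\gamma$ is scalar, and $\det\gamma=1$ forces $\gamma=\pm 1$. This avoids both the conjugation step you use to move $\infty$ out of the picture and the discriminant computation, and it is marginally more self-contained than invoking the elliptic/parabolic/hyperbolic trichotomy as a black box. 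Conversely, your discriminant calculation has the small virtue of making the inequality $|\tr\gamma|>2$ explicit. All three arguments are standard and equally valid here.
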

\begin{proof}
Let $\gamma \in \Gamma_{\varpi_{\beta \to \alpha}}$. First note that $\gamma$ is diagonalizable in $M_2(\R)$ since it has two distinct fixed points $\alpha, \beta \in \P^1(\R)$. More precisely, let 
\begin{align}
v=\begin{pmatrix}
\alpha_1\\
\alpha_2
\end{pmatrix}, \ 
w=\begin{pmatrix}
\beta_1\\
\beta_2
\end{pmatrix} \in \R^2\!-\!\{0\}, 
\end{align}
be the eigenvectors of $\gamma$ corresponding to the distinct fixed points $\alpha$, $\beta$ respectively, i.e., $\alpha=[\alpha_1: \alpha_2], \beta=[\beta_1:\beta_2]$ in $\P^1(\R)$. Let $\lambda, \mu \in \R$ be the eigenvalues of $\gamma$ corresponding to $v, w$ respectively. Then we have
\begin{align}
\gamma = 
\begin{pmatrix}
\alpha_1&\beta_1\\
\alpha_2&\beta_2
\end{pmatrix}
\begin{pmatrix}
\lambda&0\\
0&\mu
\end{pmatrix}
\begin{pmatrix}
\alpha_1&\beta_1\\
\alpha_2&\beta_2
\end{pmatrix}^{-1}. 
\end{align}
Now, if $\lambda=\mu$, then $\lambda=\mu=\pm1$ since $\det \gamma =1$, and hence $\gamma=\pm 1$. Therefore, we see that $\gamma$ has distinct real eigenvalues if and only if $\gamma \neq \pm 1$. 
\end{proof}
%
The following proposition extends Theorem \ref{lagrange1} to the Shimura curve $\Gamma_{\mathcal O}\bs \mathfrak h$.

\begin{prop}[The geodesic Lagrange theorem for Shimura curves]\label{geodesic lagrange}
Let the notations be as above. Then the following conditions are equivalent.
\begin{enumerate}[{\rm (i)}]
\item The projection $\overline{\varpi}_{\beta \to \alpha}$ becomes a closed geodesic, i.e., $\overline{\varpi}_{\beta \to \alpha}$ has a compact image in $\Gamma_{\mathcal O}\bs \mathfrak h$.
\item There exists a hyperbolic element in $\Gamma_{ \varpi_{\beta\to\alpha}}$, i.e., $\Gamma_{ \varpi_{\beta\to\alpha}} \neq \{\pm 1\}$.
\item The two endpoints $\alpha$ and $\beta$ are of the following form:
\begin{align}\label{alpha beta}
\begin{cases}
\alpha &=\frac{1}{2w}(z-\bar z \pm \sqrt{D_{z,w}}) \\
\beta &= \frac{1}{2w}(z-\bar z \mp \sqrt{D_{z,w}})
\end{cases}
\end{align}
for some $z,w\in L$ such that $D_{z,w}:=(z-\bar z)^2+4aw\bar w > 0$. 
Here if $w=0$, we assume $(\alpha, \beta)=(0,\infty)$ or $(\infty,0)$.
\end{enumerate}
\end{prop}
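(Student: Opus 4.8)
The plan is to prove the cyclic chain of implications (i) $\Rightarrow$ (ii) $\Rightarrow$ (iii) $\Rightarrow$ (i), mirroring the structure of Theorem~\ref{lagrange1} but carrying along the explicit matrix description \eqref{image of A} of $A\subset M_2(L)$. The implication (i) $\Rightarrow$ (ii) is the softest part and the only place where the geometry of the quotient enters. First I would invoke the standard fact that $\Gamma_{\mathcal O}$ acts properly discontinuously and cocompactly on $\mathfrak h$ (cocompactness because $A$ is a division algebra, so $\Gamma_{\mathcal O}\bs\mathfrak h$ is a compact Shimura curve with no cusps). If $\overline{\varpi}_{\beta\to\alpha}$ has compact image, then lifting back to $\mathfrak h$ there must exist two distinct points $p, p'$ on the geodesic $\varpi_{\beta\to\alpha}$ and an element $\gamma\in\Gamma_{\mathcal O}$ with $\gamma p = p'$; since $\gamma$ maps the geodesic through $p$ and $p'$ to itself (isometries send geodesics to geodesics, and $\gamma p, \gamma p'$ both lie on $\varpi_{\beta\to\alpha}$), $\gamma$ fixes the unordered pair of endpoints $\{\alpha,\beta\}$. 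Because the orientation is preserved along a recurring geodesic (the recurrence happens at two distinct parameter values with the same direction), $\gamma$ fixes $\alpha$ and $\beta$ individually, so $\gamma\in\Gamma_{\varpi_{\beta\to\alpha}}$; and $\gamma\neq\pm1$ since $\gamma p\neq p$. By Lemma~\ref{lem hyp} this $\gamma$ is hyperbolic, giving (ii).

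For (ii) $\Rightarrow$ (iii): take a hyperbolic $\gamma=\iota(x)\in\Gamma_{\varpi_{\beta\to\alpha}}$ with $x\in\mathcal O^1\subset A$, and write $x=\begin{pmatrix} z & a\bar w\\ w & \bar z\end{pmatrix}$ with $z,w\in L$, using \eqref{image of A}. The fixed points of $\gamma$ on $\P^1(\R)$ are the two roots $t$ of $w t^2 + (\bar z - z) t - a\bar w = 0$ (from $t = \frac{zt + a\bar w}{wt + \bar z}$), which when $w\neq 0$ are exactly $\frac{1}{2w}\bigl(z - \bar z \pm \sqrt{(z-\bar z)^2 + 4aw\bar w}\,\bigr)$, i.e.\ the pair in \eqref{alpha beta} with $D_{z,w} = (z-\bar z)^2 + 4aw\bar w$. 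These are the endpoints $\{\alpha,\beta\}$, and assigning $\pm$ to $\alpha$ and $\mp$ to $\beta$ is just a labeling choice. It remains to check $D_{z,w}>0$: the eigenvalues $\lambda,\mu$ of $\gamma$ satisfy $\lambda\mu = \det\gamma = \nrd(x) = 1$ and $\lambda+\mu = \trd(x) = z+\bar z \in F\subset\R$, and being hyperbolic they are real and distinct, so the discriminant $(\trd x)^2 - 4 = (z+\bar z)^2 - 4z\bar z + 4aw\bar w = (z-\bar z)^2 + 4aw\bar w = D_{z,w}$ (using $z\bar z - aw\bar w = \nrd(x) = 1$) is strictly positive. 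The case $w=0$ forces the fixed points to be $0$ and $\infty$ and is handled separately as stipulated.

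For (iii) $\Rightarrow$ (i): given $\alpha,\beta$ of the form \eqref{alpha beta}, I want to produce a hyperbolic element of $\Gamma_{\varpi_{\beta\to\alpha}}$; then running the argument in reverse — a geodesic invariant under a hyperbolic $\gamma\in\Gamma_{\mathcal O}$ projects to a closed geodesic of length equal to the translation length of $\gamma$ — gives (i). The construction goes through the relative unit group: the field $K := F(\alpha) = F(\sqrt{D_{z,w}})$ is a quadratic extension of $F$, it embeds into $A$ (since $\alpha$ is a fixed point, $F[\gamma]$ for a suitable $\gamma$, or more directly $F[x]$ for the element $x$ above, is a quadratic $F$-subalgebra of $A$), and intersecting such an embedding of $K$ with $\mathcal O^1$ yields a group of reduced-norm-one units. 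The key input is that this relative unit group has rank one — this is the Dirichlet-type unit theorem for orders in $A$ / the structure of $\mathcal O^1$, which is infinite precisely because $A$ is ramified at all but one infinite place — so it contains an element of infinite order, whose image under $\iota$ is a non-torsion, hence (by Lemma~\ref{lem hyp}) hyperbolic, element fixing $\alpha$ and $\beta$. I expect the main obstacle to be exactly this step: one must verify carefully that $K$ embeds into $A$ with the chosen $\alpha$ among its fixed points (an Eichler/Hasse local-global embedding condition, which is automatic here since $K$ is totally real of the right signature and $A$ is unramified at $\sigma_1$), and that the resulting unit group is genuinely of rank one rather than finite — the latter being the crux, and the point where the division-algebra hypothesis $A\not\simeq M_2(F)$ is essential. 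Everything else is linear algebra over $L$ and the elementary hyperbolic-geometry facts about translation lengths of geodesics.
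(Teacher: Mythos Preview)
Your argument for (i)$\Rightarrow$(ii) has a real gap. You assert that ``$\gamma p, \gamma p'$ both lie on $\varpi_{\beta\to\alpha}$'', but only $\gamma p = p'$ lies on the geodesic by hypothesis; nothing places $\gamma p'$ there. Worse, since $A$ is a division algebra the quotient $\Gamma_{\mathcal O}\bs\mathfrak h$ is \emph{compact}, so every geodesic has compact image as a point-set---your argument, were it valid, would then show that every geodesic is the axis of a hyperbolic element of $\Gamma_{\mathcal O}$, which is false. The phrase ``compact image'' in condition (i) has to be read as ``periodic'' (the two are literally equivalent for the modular curve of Theorem~\ref{lagrange1}, which has a cusp, and the paper is carrying the same wording over). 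With that reading the correct argument passes to the unit tangent bundle: periodicity produces unit tangent vectors $(p,v),(p',v')$ along the lift of $\varpi$ to $T^1\mathfrak h$ and an element $\gamma\in\Gamma_{\mathcal O}$ with $\gamma_*(p,v)=(p',v')$, so $\gamma$ sends the oriented geodesic determined by $(p,v)$, namely $\varpi$, to itself. The paper itself simply declares (i)$\Leftrightarrow$(ii) ``clear'' and does not spell this out.

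The remainder of your proposal is correct and follows the paper's route, which proves (iii)$\Rightarrow$(ii) via Lemma~\ref{lem ass field}. One simplification: the Eichler--Hasse embedding concern is unnecessary. Given $z,w\in L$ from (iii), the element $\theta_{z,w}=\left(\begin{smallmatrix} z & a\bar w\\ w & \bar z\end{smallmatrix}\right)$ already lies in $A$ by \eqref{image of A}, so $K_{z,w}:=F[\theta_{z,w}]\subset A$ is embedded by construction---no local-global criterion is invoked. (Relatedly, the relevant field is this $K_{z,w}$ rather than $F(\alpha)$: the number $\alpha$ sits in $L(\sqrt{D_{z,w}})$, not inside $A$; the isomorphism $K_{z,w}\simeq F(\sqrt{D_{z,w}})$ is the evaluation map $\rho_\alpha$ of Lemma~\ref{lem ass field}(4).) The rank-one computation you sketch is exactly Lemma~\ref{lem ass field}(6), and the division-algebra hypothesis enters there, as you note, through the signature of $K_{z,w}$ at the archimedean places (Lemma~\ref{lem ass field}(3)).
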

%


Before proving this proposition we introduce some more notations. 
Suppose that $\alpha, \beta \in \R \cup\{\infty\}$ can be written in the form 
\begin{align}
\alpha &= \frac{1}{2w}(z-\bar z \pm \sqrt{D_{z,w}}) \\
\beta &= \frac{1}{2w}(z-\bar z \mp \sqrt{D_{z,w}})
\end{align}
for $z,w \in L$ such that $D_{z,w}=(z-\bar z)^2+4aw\bar w >0$. Note that we have $D_{z,w} \in F$ by the definition. 
Set $\theta_{z,w}:=
\begin{pmatrix}
z&a\bar w\\
w&\bar z\\
\end{pmatrix} \in A \subset M_2(L)$. 
Then we define 
\begin{align}
K_{z,w}:=F[\theta_{z,w}] \subset A
\end{align}
to be the $F$-subalgebra of $A$ generated by $\theta_{z,w}$, and set $\mathcal O_{z,w}:=K_{z,w}\cap \mathcal O$. Note that we have $\theta_{z,w} \notin F$ because $D_{z,w} \neq 0$.
We denote by $\mathcal O_{z,w}^1:=\mathcal O_{z,w}^{\times} \cap \mathcal O^1$ the group of reduced norm one units in $\mathcal O_{z,w}$.

\begin{lem}\label{lem ass field}
\begin{enumerate}[$(1)$]
\item The subalgebra $K_{z,w}$ is a maximal (commutative) subfield in $A$. 
\item The field $K_{z,w}$ is a quadratic extension of $F$, and the reduced norm on $A$ restricted to $K_{z,w}$ coincides with the field norm of $K_{z,w}/F$.
\item The field $K_{z,w}$ splits at the place $\sigma_1$ and ramifies at the places $\sigma_2, \dots, \sigma_d$, i.e., $K_{z,w} \otimes_F \R \simeq \R \times \R$ and $K_{z,w} \otimes_F F_{\sigma_i}\simeq \C$ for $2\leq i \leq d$. 
\item Let $F(\sqrt{D_{z,w}}) \subset \R$ be the quadratic extension of $F$ in $\R$ generated by $\sqrt{D_{z,w}}$. Then we have the following isomorphism of fields:
\begin{align}
\rho_{\alpha}: K_{z,w}\overset{\sim}{\rightarrow} F(\sqrt{D_{z,w}}); 
\begin{pmatrix}
q&r\\
s&t\\
\end{pmatrix} \mapsto s\alpha +t. \label{ev map} 
\end{align}
\item We have the following identity:
\begin{align}
K_{z,w}^{\times} 
&=\{ \gamma \in A^{\times}\subset GL_2(L) \mid \gamma \alpha=\alpha, \gamma \beta=\beta \} \label{eqn217} \\
&=\{ \gamma \in A^{\times}\subset GL_2(L) \mid \gamma \alpha=\alpha \} \label{eqn218}
\end{align}
In particular, the subfield $K_{z,w} \subset A$ depends only on $\alpha $, and does not depend on the choice of $z,w\in L$. By taking the intersection with $\mathcal O^1$ we also obtain $\mathcal O_{z,w}^1=\Gamma_{\varpi_{\beta \to \alpha}}$.
\item  The subring $\mathcal O_{z,w} \subset K_{z,w}$ is an order in $K_{z,w}$. In particular $\rank_{\Z} \mathcal O_{z,w}^1=1$, and there exists $\varepsilon_0 \in \mathcal O_{z,w}^1$ such that $\mathcal O_{z,w}=\{\pm \varepsilon_0^k \mid k \in \Z\}$.
\end{enumerate}
\end{lem}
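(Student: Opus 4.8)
The plan is to extract (1) and (2) from the characteristic polynomial of $\theta_{z,w}$, to obtain (3) and (4) by base change together with the ``evaluation at $\alpha$'' map, to deduce (5) from the structure of subalgebras of a quaternion \emph{division} algebra, and to get (6) from Dirichlet's unit theorem; throughout I use that $A$ is a division algebra and that $\nrd=\det\circ\iota$, $\trd=\tr\circ\iota$. For (1) and (2): the characteristic polynomial of $\theta_{z,w}$ in $M_2(L)$ is $X^2-(z+\bar z)X+(z\bar z-aw\bar w)$, of discriminant $D_{z,w}\neq 0$; hence $\theta_{z,w}\notin F$ and $\dim_F K_{z,w}=\dim_F F[\theta_{z,w}]=2$. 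Being a finite-dimensional commutative subalgebra of the division algebra $A$, $K_{z,w}$ is a domain, hence a field, so $K_{z,w}/F$ is quadratic; it is a maximal commutative subfield because in a quaternion algebra the centralizer of a quadratic subfield is that subfield itself. For the norm statement, if $x\in K_{z,w}$ then $\nrd(x)=\det\iota(x)$ is the constant term of the characteristic polynomial of $\iota(x)$, which is (a power of) the minimal polynomial of $x$ over $F$, so $\nrd(x)=N_{K_{z,w}/F}(x)$.

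I treat (4) before (3). Since $\alpha$ is by construction a fixed point of the linear fractional action of $\theta_{z,w}$, the line $\ell_\alpha:=F\cdot{}^{t}(\alpha,1)$ (or $F\cdot{}^{t}(1,0)$ when $\alpha=\infty$) is a common eigenline for the whole algebra $K_{z,w}=F[\theta_{z,w}]$, and $\rho_\alpha$ is the homomorphism assigning to $\gamma\in K_{z,w}$ its eigenvalue on $\ell_\alpha$, which is automatically additive and multiplicative. Computing $\rho_\alpha(\theta_{z,w})=w\alpha+\bar z=\tfrac12(z+\bar z\pm\sqrt{D_{z,w}})$ shows $\rho_\alpha(K_{z,w})=F+F\sqrt{D_{z,w}}=F(\sqrt{D_{z,w}})$; since $\rho_\alpha$ is nonzero on the field $K_{z,w}$ it is injective, and it is an isomorphism by comparing $F$-dimensions (in particular $\sqrt{D_{z,w}}\notin F$, automatically). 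Then (3) is a base-change computation: $K_{z,w}\otimes_F\R\simeq F(\sqrt{D_{z,w}})\otimes_F\R\simeq\R\times\R$ because $D_{z,w}>0$ in $\R=F_{\sigma_1}$, while for $i\geq 2$ the quadratic $F_{\sigma_i}$-algebra $K_{z,w}\otimes_F F_{\sigma_i}$ embeds into $A\otimes_F F_{\sigma_i}\simeq\mathbb H$, hence has no zero divisors and must be $\C$.

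For (5): $B_\alpha:=\{\gamma\in A\mid\gamma\ell_\alpha\subseteq\ell_\alpha\}$ is an $F$-subalgebra of $A$ containing $1$ and, by the previous paragraph, containing $K_{z,w}$, and $B_\alpha\setminus\{0\}=\{\gamma\in A^\times\mid\gamma\alpha=\alpha\}$ because $A$ is division. An $F$-subalgebra of a quaternion algebra has $F$-dimension $1$, $2$ or $4$: dimension $3$ is excluded because a $3$-dimensional finite-dimensional domain would be a division algebra, central simple of square dimension over its center, forcing a cubic subfield of $A$, which cannot exist (subfields of a quaternion algebra have degree at most $2$). Since $\dim_F B_\alpha\geq 2$, either $B_\alpha=K_{z,w}$ or $B_\alpha=A$, and the latter is impossible since $A$ is Zariski-dense in $A\otimes_F\bar F\simeq M_2(\bar F)$, which stabilizes no line. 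Hence $\{\gamma\in A^\times\mid\gamma\alpha=\alpha\}=K_{z,w}^\times$; combined with the obvious chain $K_{z,w}^\times\subseteq\{\gamma\in A^\times\mid\gamma\alpha=\alpha,\gamma\beta=\beta\}\subseteq\{\gamma\in A^\times\mid\gamma\alpha=\alpha\}$ this gives (\ref{eqn217}) and (\ref{eqn218}), and the independence of $K_{z,w}$ from $(z,w)$ follows. Intersecting with $\mathcal O^1$ and using $K_{z,w}^\times\cap\mathcal O^\times=\mathcal O_{z,w}^\times$ (if $x$ and $x^{-1}$ both lie in $K_{z,w}\cap\mathcal O$ then $x\in\mathcal O_{z,w}^\times$) yields $\mathcal O_{z,w}^1=\Gamma_{\varpi_{\beta\to\alpha}}$.

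For (6): $\mathcal O_{z,w}=K_{z,w}\cap\mathcal O$ is a unital subring of $K_{z,w}$, finitely generated over $\mathcal O_F$ (a submodule of the $\mathcal O_F$-lattice $\mathcal O$, with $\mathcal O_F$ Noetherian) and of full rank (every element of $K_{z,w}$ has an $\mathcal O_F$-multiple in $\mathcal O$ after clearing denominators), hence an order in the quadratic field $K_{z,w}$. By (3) the signature of $K_{z,w}$ is $(2,d-1)$, so Dirichlet's unit theorem gives $\rank_\Z\mathcal O_{z,w}^\times=2+(d-1)-1=d$. The norm map $N_{K_{z,w}/F}\colon\mathcal O_{z,w}^\times\to\mathcal O_F^\times$ has kernel $\mathcal O_{z,w}^1$ (by (2)) and image containing $(\mathcal O_F^\times)^2$, of rank $d-1$ (Dirichlet for the totally real field $F$); being a subgroup of $\mathcal O_F^\times$ the image has rank exactly $d-1$, so $\rank_\Z\mathcal O_{z,w}^1=d-(d-1)=1$. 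Since $K_{z,w}$ has a real place its only roots of unity are $\pm1$, so the torsion of $\mathcal O_{z,w}^1$ is $\{\pm1\}$ and $\mathcal O_{z,w}^1=\{\pm\varepsilon_0^{k}\mid k\in\Z\}$ for any generator $\varepsilon_0$ of the free part. The one step I expect to require real care is the inclusion $\{\gamma\in A^\times\mid\gamma\alpha=\alpha\}\subseteq K_{z,w}^\times$ in (5): unlike the rest it is not a local matrix identity and genuinely uses that $A$ is a division algebra Zariski-dense in $M_2$ over $\bar F$; everything else is routine bookkeeping with quaternion algebras and Dirichlet's theorem.
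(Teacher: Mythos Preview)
Your proof is correct and follows essentially the same route as the paper: characteristic polynomial for (1)--(2), discriminant together with the embedding into $\mathbb{H}$ for (3), the eigenvalue/evaluation map for (4), a density contradiction for the hard inclusion in (5), and Dirichlet's unit theorem for (6). Two cosmetic remarks: the line $\ell_\alpha$ should be taken over $\R$ (or $F(\sqrt{D_{z,w}})$) rather than $F$, since $\alpha\notin F$ in general; and in (5) you reach ``$B_\alpha=A$'' via a subalgebra-dimension count whereas the paper argues directly that $F[\theta_{z,w},\gamma]=A$ for any $\gamma\notin K_{z,w}$ fixing $\alpha$ --- but both arguments terminate in the same observation that $A$ (being dense in $M_2$) cannot stabilize a single line.
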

\begin{proof}
(1) This is because we have assumed that $A$ is a division algebra and $\theta_{z,w} \notin F$. 

(2) Now since $\theta_{z,w} \notin F$, the characteristic polynomial $P_{z,w}(X)=X^2-\tr(\theta_{z,w})X+\det(\theta_{z,w})=X^2-\trd(\theta_{z,w})X+\nrd(\theta_{z,w}) \in F[X]$ of $\theta_{z,w}$ as a matrix in $M_2(L)$ becomes the minimal polynomial of $\theta_{z,w}$ with respect to the field extension $K_{z,w}/F$. Therefore, $K_{z,w}$ is a quadratic extension of $F$, and the reduced norm and the field norm coincide. 

(3) We easily see that the discriminant of the characteristic polynomial $P_{z,w}(X)$ is $\tr(\theta_{z,w})^2-4\det(\theta_{z,w})= D_{z,w}$. Therefore the assumption $D_{z,w}>0$ implies that $K_{z,w}/F$ splits at $\sigma_1$. On the other hand, for $2\leq i \leq d$, the assumption $A\otimes_F F_{\sigma_i}=\mathbb H$ implies that $K_{z,w} \otimes_F F_{\sigma_i}$ must be a field of degree $2$ over $\R$, and hence isomorphic to $\C$. 

(4) The map $\rho_{\alpha}$ is an $F$-linear map which sends $1$ to $1$, and $\theta_{z,w}$ to $w\alpha+\bar z$. Now, since $w\alpha+\bar z=\frac{1}{2}(z+\bar z \pm \sqrt{D_{z,w}})$ is a root of the characteristic polynomial $P_{z,w}(X)$, the map $\rho_{\alpha}$ is an isomorphism.

(5) 
Note that the fixed points of $\theta_{z,w}$ in $\P^1(\C)$ are $\alpha$ and $\beta$, i.e., $\theta_{z,w}\alpha=\alpha$ and $\theta_{z,w}\beta=\beta$. 
Let $\gamma \in K_{z,w}^{\times}$. Then $\gamma$ commutes with $\theta_{z,w}$ in $K_{z,w} \subset M_2(L)$, and hence $\gamma$ and $\theta_{z,w}$ have the same eigenvectors. Therefore the fixed points of $\gamma$ are also $\alpha$ and $\beta$, and thus $\gamma$ belongs to the right hand side of (\ref{eqn217}). 
Clearly, the right hand side of (\ref{eqn217}) is a subset of the right hand side of (\ref{eqn218}). 
%
%
Now, let $\gamma \in A^{\times}$ such that $\gamma \alpha=\alpha$. It suffices to show that $\gamma \in K_{z,w}=F[\theta_{z,w}]$. Suppose $\gamma \notin F[\theta_{z,w}]$. Then, since $F[\theta_{z,w}]$ is a field by (1), the $F$-subalgebra $F[\theta_{z,w},\gamma] \subset A$ becomes an $F[\theta_{z,w}]$-algebra of degree at least $2$. Therefore we obtain $F[\theta_{z,w},\gamma]=A$ by (2). By the assumption, $\theta_{z,w}$ and $\gamma$ share the same fixed point $\alpha$, and hence share the same eigenvector, say $v \in \R^2\!-\!\{0\}$. Then it follows that every element of $A^{\times} \subset GL_2(\R)$ shares the same eigenvector $v$, and hence every element of $(A\otimes_F \R)^{\times}=GL_2(\R)$ shares the same eigenvector $v$. However, this is impossible. Thus we see $\gamma \in K_{z,w}$.

(6) Since $\mathcal O$ is a finitely generated $\mathcal O_F$-module and $\mathcal O_F$ is noetherian, we see that $\mathcal O_{z,w}$ is finitely generated as an $\mathcal O_F$-module. On the other hand, since $\mathcal O$ is an order in $A$, there exists $m \in \Z_{>0}$ such that $m\theta_{z,w} \in \mathcal O\cap K_{z,w}=\mathcal O_{z,w}$, thus we see $\mathcal O_{z,w}\otimes_{\mathcal O_F} F=K_{z,w}$. Therefore $\mathcal O_{z,w}$ is an order in $K_{z,w}$.
Now, by (2), we have $\mathcal O_{z,w}^1=\kernel (N_{K_{z,w}/F}: \mathcal O_{z,w}^{\times}\rightarrow \mathcal O_F^{\times})$, and $\coker(N_{K_{z,w}/F})$ is a torsion group. Thus we get $\rank_{\Z} \mathcal O_{z,w}^1=1$ by (3) and Dirichlet's unit theorem.
\end{proof}

We denote by $R_{\mathcal O,z,w}$ (resp. $U_{\mathcal O,z,w /F}$) the image of $\mathcal O_{z,w}$ (resp. $\mathcal O_{z,w}^1$) under the isomorphism $\rho_{\alpha}$, i.e., 
\begin{align}
R_{\mathcal O, z,w} &:= \rho_{\alpha}(\mathcal O_{z,w}) \subset F(\sqrt{D_{z,w}}), \\
U_{\mathcal O, z,w/F} &:= \rho_{\alpha}(\mathcal O_{z,w}^1)=\ker (N_{F(\sqrt{D_{z,w}})/F} : R_{\mathcal O, z,w}^{\times} \rightarrow \mathcal O_F^{\times}).
\end{align}

\begin{proof}[Proof of Proposition \ref{geodesic lagrange}]
The equivalence (i) $\Leftrightarrow$ (ii) is clear. The implication (ii) $\Rightarrow$ (iii) is also clear because if $\gamma \in \Gamma_{\varpi_{\beta\to\alpha}}$ is hyperbolic, $\gamma$ can be written as 
$\gamma =\theta_{z,w}= \begin{pmatrix}
z&a\bar w\\
w&\bar z\\
\end{pmatrix}$  for  some $z,w \in L$ with $D_{z,w}>0$ by (\ref{image of A}). Then we easily see that the two fixed points $\alpha, \beta$ of $\gamma$ can be written as (\ref{alpha beta}).
It remains to prove (iii) $\Rightarrow$ (ii). 
Suppose $\alpha$ and $\beta$ are written as (\ref{alpha beta}). By Lemma \ref{lem ass field} (6), there exists a non-torsion unit $\varepsilon \in \mathcal O_{z,w}^1$. Then, by Lemma \ref{lem ass field} (5), we see $\varepsilon \in \Gamma_{\varpi_{\beta\to \alpha}}$. 
Finally, because $\varepsilon \neq \pm 1$, it is a hyperbolic element by Lemma \ref{lem hyp}.
\end{proof}

\subsection{The Shimura curve coming from the $(2,3,7)$-triangle group}\label{ex (2,3,7)}

Here we recall some basic facts about the case where $\Gamma_{\mathcal O}$ becomes the $(2,3,7)$-triangle group. 
Let $\eta :=2\cos\left(\frac{2\pi}{7}\right) \in \R$ be the unique positive root of $X^3+X^2-2X-1$, and let $F:=\Q (\eta)\subset \R$ be the totally real cubic field generated by $\eta$ over $\Q$. Then we have $\mathcal O_F=\Z[\eta]$. 
We consider the quaternion algebra $A:=\quat{\eta,\eta}{F}=F+Fi+Fj+Fk$ with $i^2=j^2=\eta$ and $ij=-ji=k$. By taking a splitting field $L:=F(\sqrt{\eta}) \subset \R$ we embed $A$ into $M_2(L) \subset M_2(\R)$ as in Section \ref{subsection closed geodesics}: 
\begin{align}
\iota: A 
\overset{\sim}{\rightarrow} \left\{ \left.
\begin{pmatrix}
z&\eta \bar{w}\\
w&\bar{z}
\end{pmatrix}\right|
z,w \in L
\right\} \subset M_2(L); 
\begin{cases}
i &\mapsto 
\begin{pmatrix}
0&\eta \\
1&0\\
\end{pmatrix}\vspace{1mm}\\
j &\mapsto 
\begin{pmatrix}
\sqrt{\eta}&0\\
0&-\sqrt{\eta}\\
\end{pmatrix}
\end{cases}\label{iota3}
\end{align}
In the following we regard $A$ as a subalgebra of $M_2(L)\subset M_2(\R)$ via (\ref{iota3}). 
Since $\eta$ is the unique positive root of $X^3+X^2-2X-1$, we see that $A$ satisfies the condition $A\otimes_{\Q}\R\simeq M_2(\R) \times \mathbb H^{2}$. 
There is a maximal order $\mathcal O \subset A$ called the Hurwitz order which is generated (as an $\mathcal O_F$-algebra) by $i, j$ and $j':=\frac{1}{2}(1+\eta i+(1+\eta +\eta^2)j)$, i.e.,
\begin{align}
\mathcal O :=\Z[\eta][i,j,j'] \subset A.
\end{align}
Then it is known that $\Gamma_{\mathcal O}=\mathcal O^1 \subset SL_2(\R)$ becomes the $(2,3,7)$-triangle group. (Strictly speaking, the image of $\Gamma_{\mathcal O}$ in $PSL_2(\R)=\Aut(\mathfrak h)$ is the $(2,3,7)$-triangle group.) More precisely, let 
\begin{align} 
g_2 &:= ij/\eta, \label{g2}\\
g_3 &:=\frac{1}{2}(1+(\eta^2-2)j+(3-\eta^2)ij),\label{g3} \\
g_7 &:=\frac{1}{2}(\eta^2+\eta-1+(2-\eta^2)i+(\eta^2+\eta-2)ij). \label{g7}
\end{align}
Then it is known that $g_2,g_3,g_7$ are the generator of $\mathcal O^1$ with the relations $g_2^2=g_3^3=g_7^7=-1$ and $g_2=g_7g_3$. See Elkies~\cite{elkies98}, \cite{elkies99}, and Katz-Schaps-Vishne~\cite{katzschapsvishne11}.  Therefore we put $\Delta(2,3,7):=\Gamma_{\mathcal O}=\mathcal O^1$. 
See (\ref{g_2}), (\ref{g_3}), (\ref{g_7}) in Section \ref{constants} for more explicit presentation of $g_2,g_3,g_7$ as matrices in $SL_2(\R)$, and see also Remark \ref{rmk fig1} for the action of $g_2, g_3, g_7$ on the upper-half plane $\mathfrak h$.

In the next section we study the geodesics on the Shimura curve $\Delta(2,3,7)\bs \mathfrak h$ using the geodesic continued fraction.

\section{Geodesic continued fraction for $\Delta (2,3,7) \bs \mathfrak h$}\label{section GCF}

Now, we have seen in Proposition \ref{geodesic lagrange} that the geodesics $\varpi$ on $\mathfrak h$ joining special algebraic numbers become periodic on the Shimura curve $\Gamma_{\mathcal O}\bs \mathfrak h$. The geodesic continued fraction is an algorithm to observe the behavior of a given geodesic $\varpi$ on $\mathfrak h$ with respect to the action of $\Gamma_{\mathcal O}$ and enables us to detect the periodicity of $\varpi$. 

In the following we focus on the case where $\Gamma_{\mathcal O}=\Delta(2,3,7)$. Let the notations be the same as in Section \ref{ex (2,3,7)}.

\subsection{Notation}
For $z,w \in \overline{\mathfrak h}$ such that $z\neq w$, we denote by  $\brr{z,w} \subset \mathfrak h$ the open geodesic segment joining $z$ and $w$, and define by $\bsr{z,w}:= \brr{z,w} \cup \{z\}, \brs{z,w}:= \brr{z,w} \cup \{w\}, \bss{z,w}:= \brr{z,w} \cup \{z,w\} \subset \overline{\mathfrak h}$ the half open and closed geodesic segments. 
In the case where $z=w$, we assume that $\brr{z,z}=\bsr{z,z}=\brs{z,z}=\emptyset$ and $\bss{z,z}=\{z\}$. 
We also denote by $\overrightarrow{zw}$ the oriented closed geodesic segment joining $z$ to $w$, i.e., the geodesic segment $\bss{z,w}$ with orientation from $z$ to $w$. In the case where $z=w$, we assume $\overrightarrow{zz}$ has the unique trivial orientation: $z$ to $z$.

For an oriented geodesic $\varpi$ on $\mathfrak h$ and points $P,Q \in \varpi$, we introduce the natural order $\leq_{\varpi}, <_{\varpi}$ by
\begin{align}
\begin{cases}
P \leq_{\varpi} Q & \text{ if } \varpi \cap \bss{P,Q}=\overrightarrow{PQ},\\
P <_{\varpi} Q & \text{ if } \varpi \cap \bss{P,Q} = \overrightarrow{PQ} \text{ and } P \neq Q. 
\end{cases}
\end{align}

\paragraph{Fundamental domain} 

Let $\tau_2,\tau_3,\tau_7 \in \mathfrak h$ be the fixed points of the elliptic elements $g_2,g_3,g_7 \in \Delta(2,3,7)$ respectively. We also put $\tau_3' :=g_2\tau_3=g_7\tau_3 \in \mathfrak h$. Then the (closed) triangle $\mathcal F \subset \mathfrak h$ whose vertices are $\tau_3, \tau_3', \tau_7$ and whose edges are geodesic segments $\bss{\tau_3,\tau_3'},\bss{\tau_3,\tau_7},\bss{\tau_3',\tau_7}$, is known to be a fundamental domain for $\Delta(2,3,7)$. 
See \cite{katok92}*{pp.99--101}
%
%
%

Furthermore we define $\mathcal D:=\bigcup_{i=0}^6 g_7^i\mathcal F$ to be the regular geodesic heptagon with the center $\tau_7$. We denote by $\mathbf e_0:=\bsr{\tau_3,\tau_3'}$, $\mathbf e_0':=\brs{\tau_3,\tau_3'}$ the uppermost half open edges of $\mathcal D$, and define $\mathbf e_i:=g_7^i\mathbf e_0$, $\mathbf e_i':=g_7^i\mathbf e_0'$ for $i \in \Z/7\Z$. (Note that $g_7^7=-1$ acts trivially on $\mathfrak h$.) 
We denote by $\mathcal F^{\circ}$ (resp. $\mathcal D^{\circ}$) the interior of $\mathcal F$ (resp. $\mathcal D$).

We define $c_0:=\varpi_{-\sqrt{\eta} \to \sqrt{\eta}}$ to be the oriented geodesic joining $-\sqrt{\eta}$ to $\sqrt{\eta}$. 
By the explicit computation using (\ref{g_2}), (\ref{g_3}), (\ref{g_7}),
we see that $c_0$ is exactly the geodesic containing the edge $\mathbf e_0$. We denote by $S_0:=\{w \in \C \mid |w| \leq \sqrt{\eta}, \ \im w \geq 0\} \subset \overline{\mathfrak h}$ the closed semicircle ``inside'' $c_0$. Here $|w|$ is the usual Euclidean absolute value on $\C$.
See Figure \ref{FD(2,3,7)}.

\begin{figure}[hbtp]
\centering
    \includegraphics[clip,height=8cm, angle=0]{./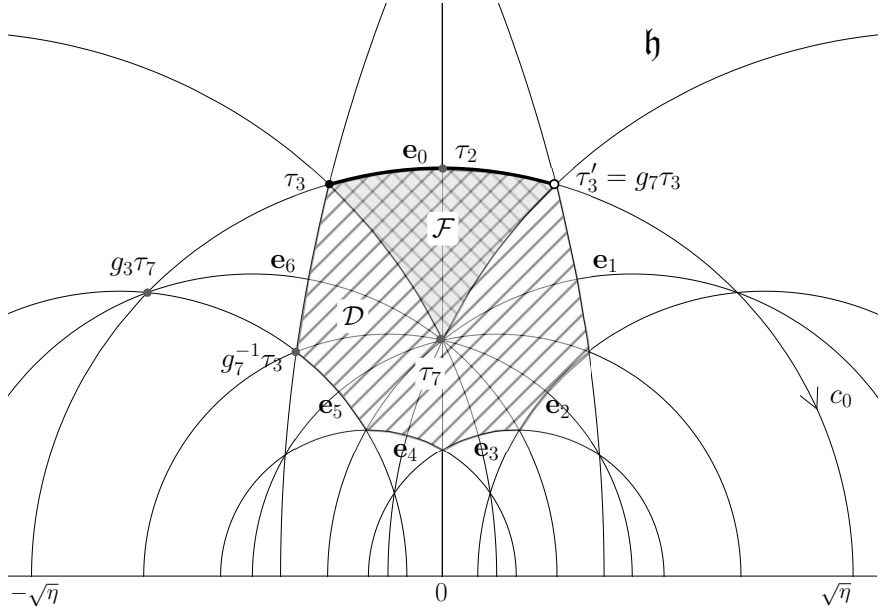} 
    \caption{Fundamental domain}\label{FD(2,3,7)}
\end{figure}

\begin{rmk}\label{rmk fig1}
In Figure \ref{FD(2,3,7)}, $g_2$ acts on $\mathfrak h$ as a rotation by $-\pi$ around $\tau_2$ (with respect to the hyperbolic metric on $\mathfrak h$),  $g_3$ acts as a rotation by $-\frac{2\pi}{3}$ around $\tau_3$, and $g_7$ acts as a rotation by $-\frac{2\pi}{7}$ around $\tau_7$. These facts can be verified by using (\ref{g_2}), (\ref{g_3}), (\ref{g_7}).
\end{rmk}

\subsection{Geodesic continued fraction algorithm}

Let $\varpi=\varpi_{\beta \to \alpha}$ be an oriented geodesic on $\mathfrak h$ joining $\beta$ to $\alpha$ ($\alpha, \beta \in \R \cup\{\infty\} $, $\alpha\neq \beta$). Note that if $\varpi \cap \mathcal D\neq \emptyset$, then there exist $P,Q \in \varpi$ such that $\varpi \cap \mathcal D= \overrightarrow{PQ}$ (possibly $P=Q$) because $\mathcal D$ is geodesically convex. 

\begin{dfn}\label{dfn reduced}
\begin{enumerate}[$(1)$]
\item We say that $\varpi$ \textit{enters (resp. leaves) $\mathcal D$ from $\mathbf e_i$ (resp. $\mathbf e_i'$)} if $\varpi \cap \mathcal D= \overrightarrow{PQ}$ for $P,Q\in \mathfrak h$ with $P \in \mathbf e_i$ (resp. $Q \in \mathbf e_i'$). 
\item We say that $\varpi$ is \textit{reduced} if $\varpi$ enters $\mathcal D$ from $\mathbf e_0$ and $|\alpha| < \sqrt{\eta}$. 
\end{enumerate}
\end{dfn}

\begin{rmk}\label{rmk reduced}
\begin{enumerate}[$1.$]
\item The reduced oriented geodesics can be classified into three types according to the way they intersect with $\mathcal D$. See Figure \ref{fig reduced} and Lemma \ref{lem trivial}. 
\item The above definition of the reducedness of geodesics is an analogue of the reducedness of real quadratic irrationals or quadratic forms in the classical theory of continued fraction. See Remark \ref{rmk sl2}.
\end{enumerate}
\end{rmk}

\begin{figure}[hbtp]
\centering
    \includegraphics[clip,height=6cm]{./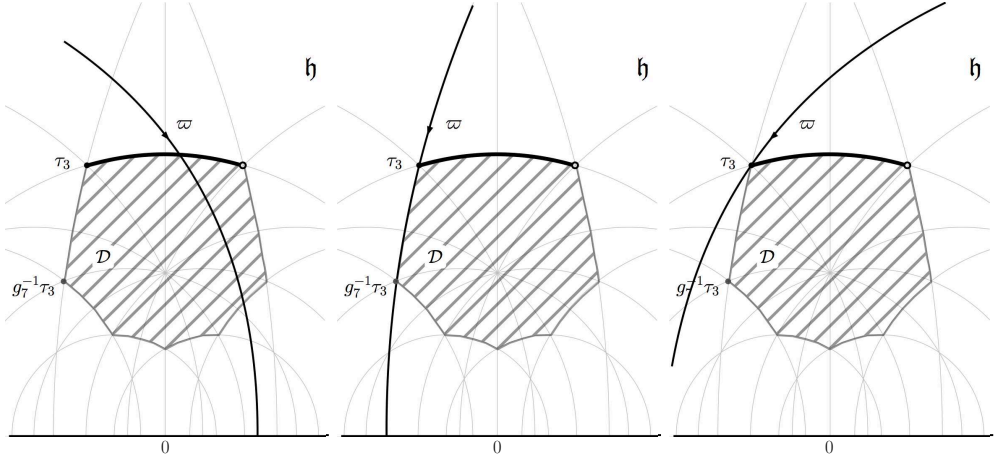} 
    \caption{$3$ types of reduced geodesics}\label{fig reduced}
\end{figure}

\begin{lem}\label{lem trivial}
Let $\varpi=\varpi_{\beta \to \alpha}$ be an oriented geodesic on $\mathfrak h$ which enters $\mathcal D$ from $\mathbf e_0$ and leaves $\mathcal D$ from $\mathbf e_i'$ $(i \in \Z/7\Z)$. Then we have the following: 
\begin{enumerate}[$(1)$]
\item If $\varpi \cap \mathcal D^{\circ} \neq \emptyset$, then we have $i \neq 0$, and both $\varpi$ and $(g_7^i g_2)^{-1}\varpi$ are reduced. 

\item If $\varpi\cap \mathcal D=\overrightarrow{\tau_3 (g_7^{-1}\tau_3)}$, then we have $i=5$ and $\varpi= g_3c_0$. In particular, we see that both $\varpi$ and $(g_7^5 g_2)^{-1}\varpi$ are reduced, and that $(g_7^5g_2)^{-1}\varpi \cap \mathcal D^{\circ} \neq \emptyset$. 

\item If $\varpi\cap \mathcal D=\overrightarrow{\tau_3 (g_7\tau_3)}$, then we have $i=0$ and $\varpi=c_0$. In particular, we see that $\varpi$ is not reduced and $g_3\varpi$ is reduced. 

\item If $\varpi\cap \mathcal D=\{\tau_3\}$, then we have $i=6$, and $(g_7^6g_2)^{-1}\varpi \cap \mathcal D^{\circ} = g_3^{-1}\varpi \cap \mathcal D^{\circ} \neq \emptyset$. 
Moreover, in this case, $\varpi$ is reduced if and only if $(g_7^6g_2)^{-1}\varpi$ is reduced.  
\end{enumerate}
As a result, for any reduced oriented geodesic $\varpi$, we see that there exists a unique index $i \in \Z/7\Z$, $i \neq 0$ such that $\varpi$  leaves $\mathcal D$ from $\mathbf e_i'$. Moreover, for such $i$, $(g_7^ig_2)^{-1}\varpi$ is again reduced. \label{recursive}
\end{lem}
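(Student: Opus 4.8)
The plan is to verify the four cases by explicit description of how the geodesic meets the heptagon $\mathcal D$, then assemble the final ``as a result'' claim from them. First I would fix coordinates: using the explicit matrices $g_2, g_3, g_7$ from Section~\ref{constants} and the fact that $c_0 = \varpi_{-\sqrt\eta \to \sqrt\eta}$ is the geodesic through the edge $\mathbf e_0$, I would locate the seven vertices $\tau_3, g_7\tau_3, \dots$ of $\mathcal D$ and record the images $g_7^i \mathbf e_0$, $g_7^i \mathbf e_0'$ for each $i$. The key structural input is that $\mathbf e_0 = \bsr{\tau_3,\tau_3'}$ and $\mathbf e_0' = \brs{\tau_3,\tau_3'}$ are the two halves of one edge of $\mathcal D$ and that $g_2$ sends $\mathbf e_0$ to the oriented edge $\mathbf e_0'$ reversed (it is the side-pairing transformation gluing the top edge to itself), so that $g_7^i g_2$ is precisely the side-pairing map carrying the exit edge $\mathbf e_i'$ back around.

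For case~(1), the generic case, I would argue that if $\varpi$ meets the open heptagon $\mathcal D^\circ$ then its exit edge cannot be the entry edge, forcing $i \neq 0$ (a geodesic segment with both endpoints on the same edge and interior meeting $\mathcal D^\circ$ is impossible by geodesic convexity and the fact that $\mathbf e_0 \cup \mathbf e_0'$ lies on the single geodesic $c_0$). Reducedness of $\varpi$ is then immediate: it enters from $\mathbf e_0$, and $|\alpha| < \sqrt\eta$ because the forward endpoint $\alpha$ lies on the $\sqrt\eta$-side of $c_0$ (the semicircle $S_0$), which is what ``leaving through $\mathbf e_i'$ with $i\neq 0$'' means combined with the orientation. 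For $(g_7^i g_2)^{-1}\varpi$: applying the side-pairing $g_7^i g_2$ identifies the exit edge $\mathbf e_i'$ with the entry edge, so $(g_7^i g_2)^{-1}\varpi$ enters $\mathcal D$ from $\mathbf e_0$ again, and one checks the endpoint condition $|\alpha'| < \sqrt\eta$ the same way. Cases~(2) and~(3) are the degenerate configurations where $\varpi$ passes through a vertex of $\mathcal F$ inside $\mathcal D$: if the intersection is exactly $\overrightarrow{\tau_3(g_7^{-1}\tau_3)}$ or $\overrightarrow{\tau_3(g_7\tau_3)}$ then $\varpi$ is forced to be $g_3 c_0$ or $c_0$ respectively (both endpoints are pinned down, being the $g_3$- or identity-images of $\pm\sqrt\eta$), and the index $i$ and reducedness are then a direct computation. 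Case~(4), the intersection being the single point $\{\tau_3\}$, is the subtle one: here $\varpi$ only touches $\mathcal D$ at the vertex $\tau_3$, and I would use $g_3^{-1}$ (which rotates by $+2\pi/3$ about $\tau_3$, cf.\ Remark~\ref{rmk fig1}) to push $\varpi$ into a position where it genuinely crosses $\mathcal D^\circ$, observing the identity $(g_7^6 g_2)^{-1}\varpi \cap \mathcal D^\circ = g_3^{-1}\varpi \cap \mathcal D^\circ$ by comparing the two elements' action near $\tau_3$, and then noting that $g_3$ fixes the boundary condition $|\alpha| < \sqrt\eta$ in a way that makes reducedness of $\varpi$ and of $(g_7^6 g_2)^{-1}\varpi$ equivalent.

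Finally, for the concluding ``as a result'' assertion: given a reduced $\varpi$, it enters $\mathcal D$ from $\mathbf e_0$ by definition and leaves from some $\mathbf e_i'$. The four cases are exhaustive for the possible configurations of $\varpi \cap \mathcal D$ relative to the vertex $\tau_3$ (either $\varpi$ meets $\mathcal D^\circ$, or $\varpi\cap\mathcal D$ is one of the two special edges through $\tau_3$, or it is the single point $\{\tau_3\}$, or — the remaining subcase of case (1) logic — it meets $\mathcal D$ in an edge-segment not through $\tau_3$, which still has interior in $\mathcal D^\circ$ since $\mathcal D$ is a genuine heptagon). In case~(3) $\varpi = c_0$ is not reduced, so that case is excluded; in all remaining cases $i \neq 0$, giving uniqueness of the nonzero index, and in each of cases (1), (2), (4) we have shown $(g_7^i g_2)^{-1}\varpi$ is reduced. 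I expect the main obstacle to be case~(4): disentangling the claim that touching $\mathcal D$ at the single vertex $\tau_3$ still yields a well-defined next index $i = 6$ and that $(g_7^6 g_2)^{-1}$ and $g_3^{-1}$ have the same effect on the interior intersection requires a careful local analysis of the group action near $\tau_3$ — the orbit of $\tau_3$ under the stabilizer, the way the three triangles $g_3^k \mathcal F$ fit around $\tau_3$, and how these interact with the heptagon's edges. The bookkeeping of which $\mathbf e_i'$ is hit, and the sign/orientation conventions in the ordering $\leq_\varpi$, is where errors are most likely to creep in, so I would double-check each index against Figure~\ref{FD(2,3,7)} and the explicit matrices.
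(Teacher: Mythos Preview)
Your proposal is correct and follows essentially the same route as the paper: case~(1) via geodesic convexity of $\mathcal D$ and the half-disc $S_0$, cases~(2)--(3) as direct degenerate-configuration checks, and the final assertion assembled from the four cases. One simplification you may have missed in case~(4): the relation $g_2 = g_7 g_3$ gives $g_7^6 g_2 = -g_7^{-1}g_2 = -g_3$ exactly (not just locally near $\tau_3$), so $(g_7^6 g_2)^{-1}\varpi = g_3^{-1}\varpi$ globally, which makes the first half of~(4) immediate and reduces the second half to the interval condition $-\sqrt\eta < \alpha < g_3\sqrt\eta$ that the paper uses.
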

\begin{proof}
Suppose $\varpi\cap\mathcal D=\overrightarrow{PQ}$ wth $P \in \mathbf e_0$ and $Q \in \mathbf e_i'$. 

(1) First, if $i=0$, then we must have $P=\tau_3$, $Q=g_7\tau_3$, and hence $\varpi \cap \mathcal D^{\circ}=\emptyset$, which is a contradiction. Therefore, $i \neq 0$. 
Take $z \in \varpi \cap \mathcal D^{\circ} \subset S_0$. We have $P \in c_0$, $z \notin c_0$, and $z \in \brr{P, Q} \subset  \brr{P, \alpha}$. If $|\alpha| \geq \sqrt{\eta}$, then we have either $\brr{P, \alpha} \subset c_0$ or $\brr{P, \alpha} \subset \mathfrak h - S_0$, which is a contradiction. Therefore, $\varpi$ is reduced. 
Next, set $\varpi':= (g_7^ig_2)^{-1}\varpi$, $P':=(g_7^ig_2)^{-1}P$, and $Q':=(g_7^ig_2)^{-1}Q$. Then we see that $Q' \in \varpi' \cap \mathbf e_0$, $\brr{P',Q'} \subset \mathfrak h - S_0$. These imply that $\varpi'$ enters $\mathcal D$ from $\mathbf e_0$ and $|(g_7^ig_2)^{-1}\alpha| < \sqrt{\eta}$, and hence $\varpi'$ is reduced. 

The assertions (2), (3) and the first half of (4) are clear. 
To see the latter half of (4), observe that (under the assumption $\varpi \cap \mathcal D=\{\tau_3\}$) $\varpi$ is reduced if and only if $-\sqrt{\eta} < \alpha < g_3 \sqrt{\eta}$, where $g_3 \sqrt{\eta}$ is the linear fractional transformation of $\sqrt{\eta}$ by $g_3$. 
Then we further see that this is equivalent to $(g_7^6g_2)^{-1}\varpi$ being reduced. 

The last assertion follows directly from (1) to (4). 
\end{proof}

\begin{lem}\label{lem reduced}
\begin{enumerate}[$(1)$]
\item For any oriented geodesic $\varpi$ and $z \in \varpi$, there exists $\gamma \in \Delta(2,3,7)$ such that $\gamma \varpi$ is reduced and $\gamma z\in \mathcal D$. \label{reducibility1}

\item For any oriented geodesic $\varpi$, there exists $\gamma \in \Delta(2,3,7)$ such that $\gamma \varpi$ is reduced and $\gamma\varpi \cap \mathcal D^{\circ}\neq \emptyset$. \label{reducibility2}

\item Let $\varpi$ be a reduced oriented geodesic such that $\varpi \cap \mathcal D^{\circ} \neq \emptyset$ and let $z \in \varpi\cap\mathcal D^{\circ}$. 
Suppose $\gamma \varpi$ is reduced and $\gamma z\in \mathcal D$ for $\gamma \in \Delta(2,3,7)$. Then we have $\gamma = \pm 1$. \label{torsion}
\end{enumerate}
\end{lem}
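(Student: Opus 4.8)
The plan is to prove the three assertions in order, since (2) and (3) will build on (1).

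\medskip

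\textbf{Assertion (1).} The goal is to produce $\gamma \in \Delta(2,3,7)$ with $\gamma\varpi$ reduced and $\gamma z \in \mathcal D$. First I would use the fact that $\mathcal D$ (a union of translates of the fundamental domain $\mathcal F$) is itself a fundamental domain for the finite-index subgroup $\langle g_7\rangle$-cosets — more precisely, since $\mathcal F$ is a fundamental domain for $\Delta(2,3,7)$, the $\Delta(2,3,7)$-translates of $\mathcal D$ tile $\mathfrak h$, so there exists $\gamma_0 \in \Delta(2,3,7)$ with $\gamma_0 z \in \mathcal D$. Replacing $\varpi, z$ by $\gamma_0\varpi, \gamma_0 z$ we may assume $z \in \mathcal D$ from the start, so $\varpi \cap \mathcal D \neq \emptyset$. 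Now $\varpi$ enters $\mathcal D$ from some edge $\mathbf e_i$ and leaves from some $\mathbf e_j'$; applying the rotation $g_7^{-i}$ (which fixes $\mathcal D$ and only permutes the edges $\mathbf e_i \mapsto \mathbf e_{i-i}=\mathbf e_0$) we may assume $\varpi$ enters from $\mathbf e_0$. It remains to arrange $|\alpha| < \sqrt\eta$. If already $|\alpha|<\sqrt\eta$ we are done (and we must still check $\gamma z \in \mathcal D$ is preserved — it is, since we only applied the $\mathcal D$-preserving rotation $g_7^{-i}$). Otherwise I would invoke Lemma \ref{lem trivial}: the geodesic leaves $\mathcal D$ from some $\mathbf e_k'$, and by parts (1)–(4) of that lemma, applying $(g_7^kg_2)^{-1}$ (or $g_3$ in the degenerate case $\varpi=c_0$) yields a reduced geodesic; one checks these elements also keep the marked point inside $\mathcal D$ (for the generic case this needs a small argument — see below).

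\medskip

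\textbf{Assertion (2).} This follows from (1) together with Lemma \ref{lem reduced}(1)'s reduced output plus Lemma \ref{lem trivial}. The plan: starting from an arbitrary $\varpi$, pick any point $z_0 \in \varpi$ and apply (1) to get $\varpi_1 := \gamma_1\varpi$ reduced with $\gamma_1 z_0 \in \mathcal D$. If $\varpi_1 \cap \mathcal D^\circ \neq \emptyset$ we are done. Otherwise, by Remark \ref{rmk reduced} and Lemma \ref{lem trivial}, $\varpi_1$ is one of the degenerate boundary geodesics $c_0$, $g_3c_0$, or a geodesic meeting $\mathcal D$ only along an edge or at the vertex $\tau_3$; in each such case Lemma \ref{lem trivial}(2)–(4) exhibits an explicit element of $\Delta(2,3,7)$ carrying $\varpi_1$ to a reduced geodesic that does meet $\mathcal D^\circ$ (e.g.\ $(g_7^5g_2)^{-1}$ in case (2), or $g_3$ in case (3)). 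Composing gives the required $\gamma$.

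\medskip

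\textbf{Assertion (3) — the main obstacle.} Here the claim is a rigidity statement: if $\varpi$ is reduced with $z \in \varpi \cap \mathcal D^\circ$, and $\gamma\varpi$ is also reduced with $\gamma z \in \mathcal D$, then $\gamma = \pm 1$. This is where the real work lies and it is essentially a statement that reduced geodesics meeting the open heptagon have no nontrivial $\Delta(2,3,7)$-identifications compatible with the marked interior point. The plan is: since $\gamma z \in \mathcal D$ and $z \in \mathcal D^\circ$, and $\mathcal F$ is a fundamental domain, one argues that $\gamma z$ cannot lie in the interior $\mathcal D^\circ$ unless $\gamma$ fixes the $\langle g_7\rangle$-orbit structure — more carefully, write $\gamma z \in g_7^m \mathcal F$ for some $m$; since $\mathcal F$ is a fundamental domain and $z \in \mathcal D^\circ$ lies in the interior of one of the tiles $g_7^\ell\mathcal F$ (or on an internal edge between them), standard fundamental-domain arguments force $\gamma \in \langle g_7\rangle \cdot (\text{stabilizer})$, i.e.\ $\gamma = \pm g_7^n$ for some $n$ (up to the case where $z$ sits on an internal wall, handled by the small stabilizer there). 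Then I would use that both $\varpi$ and $g_7^n\varpi$ are reduced: reducedness forces $\varpi$ to enter from $\mathbf e_0$, and $g_7^n$ sends $\mathbf e_0$ to $\mathbf e_n$, so $g_7^n\varpi$ enters from $\mathbf e_n$; for it to be reduced we need $n \equiv 0$, hence $\gamma = \pm 1$. The delicate point — the genuine obstacle — is the case analysis when $z$ or $\gamma z$ lies on one of the internal geodesic walls $g_7^\ell\bss{\tau_3,\tau_7}$ of $\mathcal D$ (not in the ``true'' interior of a single $\mathcal F$-tile): there one must use the explicit elliptic generators $g_2, g_3, g_7$ and Remark \ref{rmk fig1} to rule out the finitely many possibilities, and carefully track that the condition $|\alpha| < \sqrt\eta$ (not just ``enters from $\mathbf e_0$'') is violated by any nontrivial candidate $\gamma$. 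I expect this wall-and-vertex bookkeeping, rather than any conceptual difficulty, to be the crux of the argument.
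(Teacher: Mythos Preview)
Your approach is essentially the paper's, but in two places you are making it harder than it is.

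\textbf{On (1).} After rotating so that $\varpi'$ enters $\mathcal D$ from $\mathbf e_0$, you split into ``$|\alpha|<\sqrt\eta$'' versus ``otherwise'' and propose, in the latter case, to apply $(g_7^kg_2)^{-1}$ and then check that the marked point stays in $\mathcal D$. This is where you go slightly astray: Lemma~\ref{lem trivial}(1) already says that if $\varpi'$ enters from $\mathbf e_0$ and $\varpi'\cap\mathcal D^\circ\neq\emptyset$, then $\varpi'$ is \emph{automatically} reduced. So there is no ``generic'' non-reduced case at all; the only non-reduced possibilities are the boundary cases (3) and (part of) (4) of Lemma~\ref{lem trivial}, in which $z'\in\{\tau_3\}\cup\mathbf e_0$ and one applies $g_3$ (which fixes $\tau_3$ and sends $\mathbf e_0$ into $\partial\mathcal D$). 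In particular the element $(g_7^kg_2)^{-1}$ does \emph{not} preserve $\mathcal D$, so your ``small argument'' for the generic case cannot work as stated --- but fortunately that case never arises. The paper simply observes this and dispatches the two degenerate cases with $g_3$.

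\textbf{On (3).} You correctly arrive at the scheme ``show $\gamma=\pm g_7^n$, then use reducedness to force $n=0$''. But you flag the internal-wall cases as the ``genuine obstacle'' requiring a case analysis with the elliptic generators. In fact no such analysis is needed: since $z\in\mathcal D^\circ$, every $\Delta(2,3,7)$-tile $\delta\mathcal F$ containing $z$ lies in $\mathcal D$ and hence equals some $g_7^j\mathcal F$ (this holds uniformly, whether $z$ is in a tile interior, on an internal radius, or at $\tau_7$). Thus if $\gamma z\in g_7^i\mathcal F$ then $\gamma^{-1}g_7^i\mathcal F$ is a tile through $z\in\mathcal D^\circ$, forcing $\gamma=\pm g_7^{j}$ immediately. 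Since $\gamma$ then preserves $\mathcal D$, it sends the entry point $P\in\mathbf e_0$ of $\varpi$ to the entry point of $\gamma\varpi$, which by reducedness also lies in $\mathbf e_0$; hence $j=0$. The paper's proof of (3) is four lines for this reason.
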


\begin{proof}
(1) Since $\mathcal F \subset \mathcal D$, where $\mathcal F$ is the fundamental domain, there exists $\gamma' \in\Delta(2,3,7)$ such that $\gamma' z \in \gamma' \varpi \cap \mathcal D=\overrightarrow{PQ}$ for some $P, Q \in \mathfrak h$. Suppose $P\in \mathbf e_i$, and set $\varpi ':= g_7^{-i}\gamma' \varpi$, $z' :=g_7^{-i}\gamma' z \in \mathcal D$. Then enters $\mathcal D$ from $\mathbf e_0$. 
If $\varpi' \cap \mathcal D^{\circ} \neq \emptyset$ or $\varpi' \cap \mathcal D=\overrightarrow{\tau_3 (g_7^{-1}\tau_3)}$, then by Lemma \ref{lem trivial} (1), (2), $\varpi'$ is reduced as desired. 
If $\varpi' \cap \mathcal D=\overrightarrow{\tau_3 (g_7\tau_3)}$, then by Lemma \ref{lem trivial}~(3), we have $\varpi'=c_0$, and hence we see that $g_3 \varpi'$ is reduced and $g_3 z' \in \mathcal D$. 
Otherwise, we have $\varpi' \cap \mathcal D=  \{\tau_3\}$ and $z'=\tau_3$. In this case we easily see that either $\varpi'$ is reduced or $g_3\varpi'$ is reduced and $g_3 z' \in \mathcal D$.  

(2) This follows from (1) and Lemma \ref{lem trivial}. Indeed, by (1), we can find $\gamma \in \Delta(2,3,7)$ such that $\gamma \varpi$ is reduced. Then by replacing $\gamma$ by $(g_7^5g_2)^{-1}\gamma$ or $(g_7^6g_2)^{-1}\gamma$ if necessary, we further obtain $\gamma \varpi \cap \mathcal D^{\circ} \neq \emptyset$. 

(3) Suppose $\varpi\cap\mathcal D=\overrightarrow{PQ}$ and $\gamma z \in g_7^{i} \mathcal F$ ($i \in \Z/7\Z$). Then since $ z \in \gamma^{-1} g_7^{i} \mathcal F \cap \mathcal D^{\circ}$, there exists $j \in \Z/7\Z$ such that $\gamma = \pm g_7^{j}$. 
In particular, we see that $\gamma \varpi \cap \mathcal D= \gamma \varpi \cap \gamma \mathcal D= \overrightarrow{\gamma P \gamma Q}$. 
On the other hand, since $\varpi$ and $\gamma \varpi$ are both reduced, we have $P \in \mathbf e_0$ and $\gamma P \in \mathbf e_0$. Therefore, we see that $j=0$, and hence $\gamma = \pm 1$. 
\end{proof}

Now we define the geodesic continued fraction algorithm following the general principle of Morse~\cite{morse21}, Series~\cite{series85}, Katok~\cite{katok96}. Note that we slightly modify the original algorithm by using $\mathcal D$ instead of $\mathcal F$. 
\begin{dfn}[Geodesic continued fraction algorithm for $\Delta(2,3,7)\bs \mathfrak h$]\label{GCF algo}
Let $\varpi$ be an oriented geodesic on $\mathfrak h$. Define $B_0\in \Delta(2,3,7)$ and $i_k \in \Z/7\Z\!-\!\{0\}$ ($k=1,2,3,\dots$) by the following algorithm:
\begin{itemize}
\item Find (any) $B_0\in \Delta(2,3,7)$ such that $B_0^{-1}\varpi$ is reduced, and set $\varpi_0:=B_0^{-1}\varpi$. 
\item For a given reduced oriented geodesic $\varpi_k$ ($k \geq 0$), find the unique $i_{k+1} \in \Z/7\Z\!-\!\{0\}$ such that $\varpi_k$ leaves $\mathcal D$ from $\mathbf e_{i_{k+1}}'$. Set $\varpi_{k+1}:=(g_7^{i_{k+1}}g_2)^{-1}\varpi_k$. Then by Lemma \ref{lem trivial}, $\varpi_{k+1}$ is again reduced.
\end{itemize} 
We call this the \textit{geodesic continued fraction expansion} of $\varpi$ (with respect to $\Delta(2,3,7)$), and express it as
\begin{align}
\varpi = \llbracket B_0; i_1, i_2, i_3, \cdots \rrbracket_{\Delta(2,3,7)}\text{ or } 
B_0^{-1}\varpi = \llbracket i_1, i_2, i_3, \cdots \rrbracket_{\Delta(2,3,7)}. \label{GCF exp algo}
\end{align}
\end{dfn}

We review some basic properties of geodesic continued fraction expansion.  
See also Figure \ref{fig conv}.
\begin{prop}\label{prop GCF}
Let $\varpi$ be an oriented geodesic on $\mathfrak h$, and let 
\begin{align}
\varpi &= \llbracket B_0; i_1, i_2, i_3, \cdots \rrbracket_{\Delta(2,3,7)} 
\end{align} 
be the geodesic continued fraction expansion of $\varpi$. 
\begin{enumerate}[$(1)$] 
\item The choice of $B_0$ is not unique. However once we choose $B_0$, then the sequence $i_1, i_2, \dots$ are uniquely determined. More generally, let $\varpi'$ be another oriented geodesic (possibly $\varpi'=\varpi$), and let 
\begin{align}
\varpi' = \llbracket B_0'; j_1, j_2, j_3, \cdots \rrbracket_{\Delta(2,3,7)},
\end{align}
be the geodesic continued fraction expansion of $\varpi'$ such that $(B_0')^{-1}\varpi'=B_0^{-1}\varpi$, then we have $j_k=i_k$ for all $k\geq 1$.
\item For $k\geq 1$, set $A_k:=g_7^{i_k}g_2 \in \Delta(2,3,7)$ and $B_k:=B_0A_1A_2\cdots A_k \in \Delta(2,3,7)$ so that $\varpi_k=B_k^{-1}\varpi$ in the algorithm. Moreover define $P_k, Q_k \in \varpi$ so that $\varpi \cap B_k \mathcal D= \overrightarrow{P_kQ_k}$. Then we have: 
\begin{enumerate}[{\rm (i)}]
\item $P_k \in B_k \mathbf e_0$, and $Q_k=P_{k+1} \in B_k \mathbf e_{i_{k+1}}'$. In particular, we have $P_k \leq_{\varpi} P_{k+1}$.
\item $P_k \neq Q_{k+1}=P_{k+2}$. In particular, we have $P_k <_{\varpi} P_{k+2}$.
\item $B_k \mathcal D \neq B_l \mathcal D$ for $k\neq l$.
\end{enumerate}
\end{enumerate}
\end{prop}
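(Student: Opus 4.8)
The plan is to prove the three claims in part (2) essentially by unwinding the recursion in Definition \ref{GCF algo} and transporting the local statements of Lemma \ref{lem trivial} along the matrices $B_k$. Throughout, the key observation is that $B_k \mathcal D$ is the image of the heptagon $\mathcal D$ under $B_k$, that $\varpi \cap B_k\mathcal D = B_k(\varpi_k \cap \mathcal D) = \overrightarrow{P_kQ_k}$ since $\varpi_k = B_k^{-1}\varpi$ is reduced and hence actually meets $\mathcal D$, and that the orientation is preserved by $B_k \in SL_2(\R)$. For part (1), I would argue that once $B_0$ is fixed, the reduced geodesic $\varpi_0 = B_0^{-1}\varpi$ is determined, and then each $i_{k+1}$ is the \emph{unique} index (by the last assertion of Lemma \ref{lem trivial}) such that $\varpi_k$ leaves $\mathcal D$ from $\mathbf e'_{i_{k+1}}$, so $\varpi_{k+1} = (g_7^{i_{k+1}}g_2)^{-1}\varpi_k$ is determined; induction on $k$ then gives uniqueness of the whole sequence. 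The second half of (1) is immediate: if $(B_0')^{-1}\varpi' = B_0^{-1}\varpi$, then the two algorithms have identical starting reduced geodesics $\varpi_0' = \varpi_0$, and since every subsequent step is deterministic, $\varpi_k' = \varpi_k$ and $j_k = i_k$ for all $k$.

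For (2)(i): by definition $\varpi_k$ is reduced, so it enters $\mathcal D$ from $\mathbf e_0$; applying $B_k$ gives $P_k \in B_k\mathbf e_0$. Since $i_{k+1}$ is by construction the index from which $\varpi_k$ leaves $\mathcal D$, we have $Q_k \in B_k\mathbf e'_{i_{k+1}}$. To see $Q_k = P_{k+1}$, note $\varpi_{k+1} = (g_7^{i_{k+1}}g_2)^{-1}\varpi_k$, so $B_{k+1} = B_k g_7^{i_{k+1}}g_2 = B_k A_{k+1}$, and one checks from the proof of Lemma \ref{lem trivial}(1) that the point at which $\varpi_k$ leaves $\mathcal D$ (namely $B_k^{-1}Q_k$) is carried by $(g_7^{i_{k+1}}g_2)^{-1}$ to the point at which $\varpi_{k+1}$ \emph{enters} $\mathcal D$, i.e. $(g_7^{i_{k+1}}g_2)^{-1}B_k^{-1}Q_k = B_{k+1}^{-1}Q_k$ lies in $\mathbf e_0$ and equals $B_{k+1}^{-1}P_{k+1}$; hence $Q_k = P_{k+1}$. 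The inequality $P_k \leq_\varpi P_{k+1}$ then follows because $\overrightarrow{P_kQ_k}$ is the oriented segment cut out by $B_k\mathcal D$ with the orientation of $\varpi$, so $P_k \leq_\varpi Q_k = P_{k+1}$.

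For (2)(ii) I would use the fact that $i_{k+1} \neq 0$, which forces the entering edge $B_{k+1}\mathbf e_0$ and the leaving edge $B_k\mathbf e'_{i_{k+1}}$ of consecutive heptagons to be genuinely distinct edges meeting only possibly at the vertex $\tau_3$; tracking the orientation, $\varpi$ must properly cross into $B_{k+1}\mathcal D$ and then leave from $B_{k+1}\mathbf e'_{i_{k+2}}$ with $i_{k+2}\neq 0$, and the geometry (together with the convexity of $\mathcal D$ and the case analysis of Lemma \ref{lem trivial}, in particular that a reduced $\varpi_k$ never leaves from $\mathbf e_0'$) rules out $Q_{k+1} = P_k$; this gives $P_k <_\varpi P_{k+2}$. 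For (2)(iii), suppose $B_k\mathcal D = B_l\mathcal D$ with $k < l$; then $B_l^{-1}B_k$ stabilizes $\mathcal D$. Since $\mathcal D = \bigcup_{i=0}^6 g_7^i\mathcal F$ and $\mathcal F$ is a fundamental domain, the stabilizer of $\mathcal D$ in $\Delta(2,3,7)$ is $\{\pm g_7^j \mid j \in \Z/7\Z\}$ — this is exactly the kind of argument used in Lemma \ref{lem reduced}(3). So $B_l^{-1}B_k = \pm g_7^j$ for some $j$, whence $\varpi_l = B_l^{-1}\varpi = (B_l^{-1}B_k)\varpi_k = \pm g_7^j \varpi_k = g_7^j\varpi_k$; but $\varpi_k$ and $\varpi_l$ are both reduced, so both enter $\mathcal D$ from $\mathbf e_0$, forcing $g_7^j\mathbf e_0 = \mathbf e_0$ and hence $j = 0$, i.e. $\varpi_k = \varpi_l$. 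This contradicts (2)(ii) (applied repeatedly), since $P_k <_\varpi P_{k+2} <_\varpi \cdots$ shows the $\varpi_k$, or rather the heptagons $B_k\mathcal D$, cannot repeat; more directly, $\varpi_k = \varpi_l$ with $k<l$ would make the sequence $(i_m)$ periodic in a way incompatible with the strictly increasing chain $P_k <_\varpi P_{k+2} <_\varpi \cdots$ unless the geodesic is closed, and even then $B_k \neq B_l$ as elements (the stabilizer argument forces $B_l = B_k g_7^j$ with $j=0$ impossible for $k\neq l$ along the chain). I expect the main obstacle to be the careful bookkeeping in (2)(i)–(ii) around the vertex $\tau_3$ and the degenerate cases (2)–(4) of Lemma \ref{lem trivial}: one must check that "leaving from $\mathbf e'_{i_{k+1}}$" is transported correctly to "entering from $\mathbf e_0$" of the next heptagon even when $\varpi$ passes exactly through $\tau_3$, and that the non-equality $i_{k+1}\neq 0$ really does give the strict inequality in (ii) rather than merely $\leq_\varpi$.
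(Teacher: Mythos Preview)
Your treatment of part (1) and (2)(i) is fine and matches the paper's (which simply says these are ``clear from Lemma \ref{lem trivial} and Definition \ref{GCF algo}'').

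There is, however, a genuine gap in your argument for (2)(ii). Your opening claim is false: the entering edge $B_{k+1}\mathbf e_0$ and the leaving edge $B_k\mathbf e'_{i_{k+1}}$ are \emph{not} distinct; they are the very same half-open segment. Indeed $B_{k+1}\mathbf e_0 = B_k(g_7^{i_{k+1}}g_2)\mathbf e_0 = B_k g_7^{i_{k+1}}(g_2\mathbf e_0) = B_k g_7^{i_{k+1}}\mathbf e_0' = B_k\mathbf e'_{i_{k+1}}$, since $g_2$ swaps $\tau_3$ and $\tau_3'$ on the segment $\bss{\tau_3,\tau_3'}$. This shared edge is precisely how the two heptagons are glued. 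So the picture behind your argument is wrong, and the subsequent hand-wave (``$\varpi$ must properly cross into $B_{k+1}\mathcal D$'') does not address the actual degenerate case: it can happen that $\varpi_k\cap\mathcal D=\{\tau_3\}$, i.e.\ $P_k=Q_k$, with no ``proper crossing''. The question is whether this can happen for \emph{two consecutive} $k$, and you have not ruled that out. The paper's argument is: if $P_k=Q_{k+1}$ then $P_k=Q_k=P_{k+1}=Q_{k+1}$, so both $\varpi_k\cap\mathcal D$ and $\varpi_{k+1}\cap\mathcal D$ equal $\{\tau_3\}$; this forces $i_{k+1}=6$, hence $A_{k+1}=g_7^{-1}g_2=g_3$ and $\varpi_{k+1}=g_3^{-1}\varpi_k$; but then Lemma \ref{lem trivial}(4) gives $\varpi_{k+1}\cap\mathcal D^\circ\neq\emptyset$, a contradiction. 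You even flagged the $\tau_3$ bookkeeping as the ``main obstacle'', but you did not supply the actual mechanism---it is precisely part (4) of Lemma \ref{lem trivial}, not the fact that a reduced geodesic never leaves from $\mathbf e_0'$.

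For (2)(iii) your stabilizer detour is correct as far as it goes (you get $B_l^{-1}B_k=\pm g_7^j$ and then $j=0$), but it is unnecessary and your endgame is muddled: having $\varpi_k=\varpi_l$ does not by itself contradict anything, and the two alternative finishes you sketch are circular. The paper's route is much shorter and does not use the stabilizer at all: from the hypothesis $B_k\mathcal D=B_l\mathcal D$ and convexity one gets directly $P_k=P_l$ and $Q_k=Q_l$; then (i) forces $P_m=Q_m$ for all $k\le m\le l$, and (ii) (which you must prove first) forces $l-k\le 1$; the remaining case $l=k+1$ again collapses to the situation of (ii). So once (ii) is properly established, (iii) is immediate from convexity plus monotonicity---no need to identify the stabilizer of $\mathcal D$.
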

\begin{proof}
(1) and (2) {\rm (i)} are clear from Lemma \ref{lem trivial}, and Definition \ref{GCF algo}. 

(2) {\rm (ii)} Note that by (i) we have $P_k \leq_{\varpi} Q_k=P_{k+1} \leq_{\varpi} Q_{k+1}$ in general. 
Suppose $P_k=Q_{k+1}$. 
Then we have $P_k=Q_k=P_{k+1}=Q_{k+1}$, and hence $P_k=Q_k=B_{k} \tau_3$ and $P_{k+1}=Q_{k+1}=B_{k+1}\tau_3$. 
Therefore, it follows that $A_{k+1} = g_7^{-1}g_2 = g_3$. Thus we get $B_k^{-1} \varpi \cap \mathcal D=g_3^{-1}B_k^{-1} \varpi \cap \mathcal D=\{\tau_3\}$, which is impossible by Lemma \ref{lem trivial} (4).
%

(2) {\rm (iii)} Suppose $B_k \mathcal D = B_l \mathcal D$ for $k\leq l$. Since $\mathcal D$ is geodesically convex, 
we have $P_k=P_l$ and $Q_k=Q_l$. Therefore, by (i) we obtain $P_k=P_m=Q_m=Q_l$ for all $k \leq m \leq l$. Then by {\rm (ii)}, we have $k=l$.
\end{proof}

Suppose that $\varpi$ joins $\beta$ to $\alpha$ ($\alpha, \beta \in \R\cup\{\infty\}$, $\alpha \neq \beta$), i.e., $\varpi=\varpi_{\beta \to \alpha}$. 
Suppose also that $\varpi$ is reduced for simplicity, and hence we take $B_0=1$ in the algorithm. Let 
\begin{align}
\varpi =\varpi_{\beta \to \alpha}= \llbracket  i_1, i_2, i_3, \dots \rrbracket_{\Delta(2,3,7)}
\end{align}
be the geodesic continued fraction expansion of $\varpi$. For $k\geq 1$, set $A_k:=g_7^{i_k}g_2 \in \Delta(2,3,7)$ and $B_k:=A_1A_2\cdots A_k \in \Delta(2,3,7)$ as in Proposition \ref{prop GCF}. 
Then by the definition of the algorithm, the sequence $B_k\mathcal D$ ($k=1,2,3,\dots$) (of subsets of $\mathfrak h$) seems to ``approach'' to $\alpha$ as $k$ goes to $\infty$. See Figure \ref{fig conv}. In fact we can prove 
\begin{align}\label{limit tau7}
\alpha = \lim_{k \to \infty} B_k \tau_7.
\end{align}
See Theorem \ref{thm convergence}.

Now, note that for 
$\gamma=
\begin{pmatrix}
a&b\\
c&d\\
\end{pmatrix} \in SL_2(\R)
$ such that $c\neq 0$, we can rewrite the linear fractional transformation $\gamma z$ ($z \in \mathfrak \P^1(\C)$) as
\begin{align}
\gamma z=
\frac{az+b}{cz+d}
=
\frac{a}{c}-\frac{1/c^2}{d/c+z}.
\end{align}
Therefore for $i \in \Z/7\Z\!-\!\{0\}$ we can define $\bm a_i, \bm b_i, \bm c_i \in L=\Q(\sqrt{\eta})$ by 
\begin{align}
g_7^ig_2 z=:
\bm a_i-\frac{\bm b_i}{\bm c_i+z}
\end{align}
because we easily see that the lower left component of $g_7^ig_2$ is non-zero for $i \neq 0$. 
In Section \ref{constants} we compute these constants $\bm a_i, \bm b_i, \bm c_i$ explicitly. 
Then we can \underline{formally} rewrite (\ref{limit tau7}) as
\begin{align}\label{CF}
\alpha=\bm a_{i_1}-\cfrac{\bm b_{i_1}}{\bm c_{i_1}+\bm a_{i_2}-\cfrac{\bm b_{i_2}}{\bm c_{i_2}+\bm a_{i_3}-\cfrac{\bm b_{i_3}}{\bm c_{i_3}+\bm a_{i_4}-\cfrac{\bm b_{i_4}}{\bm c_{i_4}+\cdots}}}}
\end{align}
In the following section we study the convergence of this continued fraction expansion of $\alpha$.

\begin{rmk}[Remark on the case of $SL_2(\Z)$]\label{rmk sl2}
Here we briefly explain the background of the above definitions of reducedness of a geodesic $\varpi$ and the geodesic continued fraction by comparing to the case of $SL_2(\Z)$. Notations in this remark are independent of the rest of the paper. 

As we have seen in Section \ref{sect modular curve}, the modular curve $SL_2(\Z)\bs \mathfrak h$ is the Shimura curve associated to the quaternion algebra $M_2(\Q)$ over $\Q$ and a maximal order $M_2(\Z)$. Now, $SL_2(\Z)$ is the $(2,3, \infty)$-triangle group $\Delta(2,3,\infty)$ generated by 
\begin{align}
g_2=
\begin{pmatrix}
0&-1 \\
1&0 \\
\end{pmatrix}, \quad
g_3=
\begin{pmatrix}
-1&-1 \\
1&0 \\
\end{pmatrix}, \quad
g_{\infty}=
\begin{pmatrix}
1&1 \\
0&1 \\
\end{pmatrix}. 
\end{align}
The triangle $\mathcal F=\left\{z \in \mathfrak h \mid |z|\geq 1, -\frac{1}{2} \leq \re z \leq \frac{1}{2} \right\}$ is known to be a fundamental domain. Our regular geodesic heptagon corresponds to the $\infty$-gon $\mathcal D=\bigcup_{i \in \Z} g_{\infty}^i\mathcal F$, and our $\mathbf e_i , \mathbf e_i'$ correspond to 
\begin{align}
\mathbf e_i&=\left\{z \in \mathfrak h ~\left|~ |z|= 1, i-\frac{1}{2} < \re z \leq i+\frac{1}{2} \right. \right\} \\
\mathbf e_i'&=\left\{z \in \mathfrak h ~\left|~ |z|= 1, i-\frac{1}{2} \leq  \re z < i+\frac{1}{2} \right. \right\},
\end{align}
respectively. See Figure \ref{fig sl2}. 

\begin{figure}[hbtp]
\centering
    \includegraphics[clip,height=7cm]{./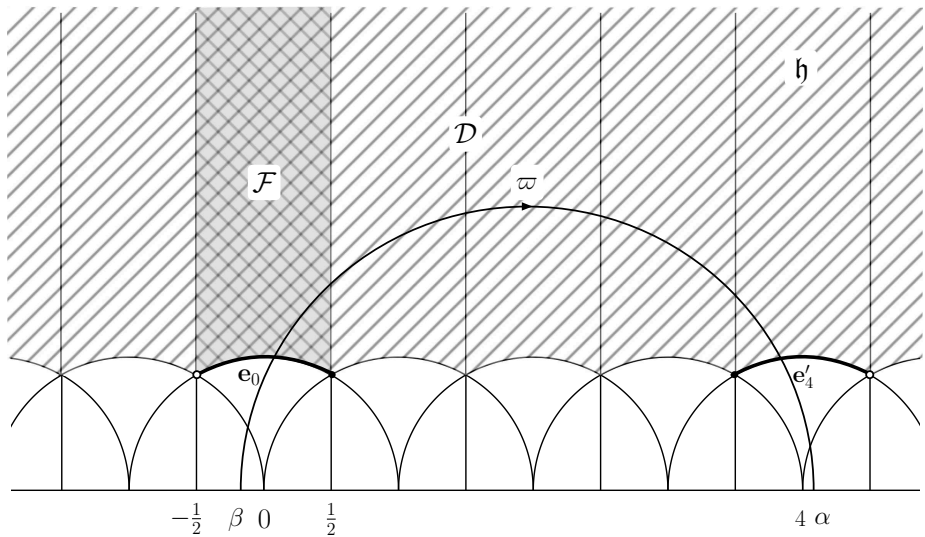} 
    \caption{The case of $SL_2(\Z)$}\label{fig sl2}
\end{figure}

Let $\varpi=\varpi_{\beta \to \alpha}$ be the oriented geodesic on $\mathfrak h$ joining $\beta$ to $\alpha$ ($\alpha, \beta \in \R$). 
Then our definition of the reduced geodesics (Definition \ref{dfn reduced}) in this case would be the following: 
$\varpi$ is said to be {\it reduced} if $\varpi$ enters $\mathcal D$ from $\mathbf e_0$ and $|\alpha|>1$. 
Then this condition can be seen roughly as:
\begin{align}
|\beta| \text{``}<\text{''} 1 \text{ and } |\alpha| > 1.
\end{align}
Hence we see that this definition is an analogue of the reducedness of real quadratic irrationals (cf. \cite{zagier75}): a real quadratic irrational $\alpha$ is said to be {\it reduced} if $\alpha$ and its conjugate $\alpha'$ (over $\Q$) satisfy
\begin{align}
0<\alpha'<1 \text{ and } \alpha>1. 
\end{align}
We also see that our geodesic continued fraction expansion of $\varpi$ in this case of $SL_2(\Z)$ coincides with the one called the ``geometric code'' and studied in \cite{katok96} by Katok. 
Furthermore since we have 
$
g_{\infty}^ig_2z =i-\frac{1}{z},
$ 
(\ref{CF}) becomes a variant of the so called ``$-$''-continued fraction expansion. 
\end{rmk}


\subsection{Convergence}\label{subsection convergence}
Perhaps the most traditional way to discuss the convergence of the above continued fraction (\ref{CF}) is to consider the limit of the following $k$-th convergent:
\begin{align}
\bm x_k^{trad}&= \bm a_{i_1}-\cfrac{\bm b_{i_1}}{\bm c_{i_1}+\bm a_{i_2}-\cfrac{\bm b_{i_2}}{\bm c_{i_2}+\cdots-\cfrac{\bm b_{i_k}}{\bm c_{i_k}+\bm a_{i_{k+1}}}}} \label{conv trad}\\
&=B_k\bm a_{i_{k+1}} = B_{k+1}\infty, \label{conv trad2}
\end{align}
Here $B_k \bm a_{i_{k+1}}$ and $B_{k+1}\infty$ are the linear fractional transformations of $\bm a_{i_{k+1}}$ and $\infty$.
Unfortunately, however, we can give an example in which the traditional $k$-th convergent $\bm x_k^{trad}$ does not converge to $\alpha$. See Example \ref{ex0}.

Instead, we consider the following ``regularized'' $k$-th convergent:
\begin{align}
\bm x^{reg}_k =B_k0= \bm a_{i_1}-\cfrac{\bm b_{i_1}}{\bm c_{i_1}+\bm a_{i_2}-\cfrac{\bm b_{i_2}}{\bm c_{i_2}+\cdots-\cfrac{\bm b_{i_k}}{\bm c_{i_k}}}}, \label{conv GCF}
\end{align}
which seems more natural in our setting since this $\bm x^{reg}_k$ corresponds to exactly the first $k$ steps of the geodesic continued fraction expansion of $\varpi$.


\begin{dfn} 
Let $\bm i = (i_k)_{k \geq 1}$ ($i_k \in \Z/7\Z\!-\!\{0\}$) be a sequence.
\begin{enumerate}[$(1)$]
\item  We define the \textit{associated formal continued fraction} $\bm x(\bm i)$ by the right hand side of (\ref{CF})
\item We define the \textit{associated traditional $k$-th convergent} $\bm x^{trad}_k(\bm i)$ by the right hand side of (\ref{conv trad}). If the traditional $k$-th convergent $\bm x^{trad}_k(\bm i)$ converges to $x \in \P^1(\C)=\C \cup\{\infty\}$ with respect to the natural topology of $\P^1(\C)$, we say that the continued fraction $\bm x(\bm i)$ converges to $x$ in the traditional sense.
\item We define the \textit{associated regularized $k$-th convergent} $\bm x^{reg}_k(\bm i)$ by the right hand side of (\ref{conv GCF}). If the regularized $k$-th convergent $\bm x^{reg}_k(\bm i)$ converges to $x \in \P^1(\C)=\C\cup \{\infty\}$ with respect to the natural topology of $\P^1(\C)$, we say that the continued fraction $\bm x(\bm i)$ converges to $x$ in the regularized sense.
\end{enumerate}
\end{dfn}

Recall that $S_0=\{w \in \C \mid |w| \leq \sqrt{\eta}, \ \im w \geq 0\} \subset \overline{\mathfrak h}$ is the closed semicircle inside the geodesic $c_0$.  
We prove the following. 
\begin{thm}\label{thm convergence}
Let $\varpi=\varpi_{\beta \to \alpha}$ be an oriented geodesic on $\mathfrak h$ joining $\beta$ to $\alpha$, and let 
\begin{align}
B_0^{-1}\varpi= \llbracket i_1,i_2,i_3, \cdots \rrbracket_{\Delta(2,3,7)}
\end{align}
be the geodesic continued fraction expansion of $\varpi$. For $k \geq 1$, set $A_k:=g_7^{i_k}g_2 \in \Delta(2,3,7)$ and $B_k:=B_0A_1A_2\cdots A_k \in \Delta(2,3,7)$. 
We denote by $B_kS_0 \subset \overline{\mathfrak h}$ the $B_k$-translation of $S_0$. 
Then for any sequence $(z_k)_{k \geq 0}$ such that $z_k \in B_kS_k$, we have
\begin{align}
\lim_{k \to \infty} z_k =\alpha 
\end{align}
with respect to the natural topology of $\P^1(\C)$. 
In particular, for any $z \in S_0$ we obtain 
\begin{align}
\lim_{k \to \infty} B_k z=\alpha.
\end{align}
\end{thm}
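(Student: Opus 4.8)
The plan is to show that the nested regions $B_k S_0$ shrink to the single boundary point $\alpha$, and that they eventually do so in a strong enough sense that any choice of $z_k \in B_k S_0$ is forced to converge. The starting observation is that, since $\varpi_k = B_k^{-1}\varpi$ is reduced for each $k$, the point $\alpha_k := B_k^{-1}\alpha$ satisfies $|\alpha_k| < \sqrt{\eta}$, i.e. $\alpha_k$ lies inside the semicircle $S_0$ for every $k$; equivalently $\alpha \in B_k S_0$ for all $k \geq 0$. So the issue is purely that the \emph{diameters} of the sets $B_k S_0$ (in the spherical metric on $\P^1(\C)$) tend to zero. Since $B_{k+2} = B_k A_{k+1} A_{k+2}$ and each $A_j = g_7^{i_j} g_2$ with $i_j \neq 0$, it is natural to work two steps at a time and argue that $B_{k+2} S_0$ is a proper subregion of $B_k S_0$, using Proposition \ref{prop GCF}(2): the geodesic segments $\overrightarrow{P_k Q_k}$ inside $B_k \mathcal D$ advance strictly, $P_k <_\varpi P_{k+2}$, and $B_k \mathcal D \neq B_l \mathcal D$ for $k \neq l$.

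The key step I would carry out is a \emph{uniform contraction estimate}. There are only finitely many possible generators $A = g_7^i g_2$ ($i \in \Z/7\Z - \{0\}$, six of them), and for each such $A$ the region $A S_0$ is a fixed hyperbolic half-plane-type region whose closure meets $\P^1(\R)$ in a closed arc strictly shorter than the arc cut out by $S_0$ itself; more precisely, because a reduced geodesic must \emph{leave} $\mathcal D$ through some $\mathbf e_i'$ with $i \neq 0$, the relevant compositions $A_{k+1} A_{k+2}$ map the closed arc $[-\sqrt\eta, \sqrt\eta] \subset \P^1(\R)$ into a closed subarc bounded away from its own endpoints by a definite amount. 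Taking $\lambda < 1$ to be the maximum over the finitely many admissible pairs $(i_{k+1}, i_{k+2})$ of the ratio of spherical arclengths, one gets
\begin{align}
\operatorname{diam}\bigl(B_{k+2} S_0\bigr) \leq \lambda \cdot \operatorname{diam}\bigl(B_k S_0\bigr),
\end{align}
hence $\operatorname{diam}(B_k S_0) \to 0$ geometrically. Combined with $\alpha \in B_k S_0$ for all $k$, this immediately gives $z_k \to \alpha$ for any $z_k \in B_k S_0$, and in particular $B_k z \to \alpha$ for fixed $z \in S_0$.

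The main obstacle is making the "strict shrinking" genuinely \emph{uniform}: a single application of one generator $A_j$ need not strictly contract the boundary arc (a parabolic-like or near-tangential boundary behavior could keep one endpoint fixed), which is exactly why Proposition \ref{prop GCF}(2)(ii)--(iii) forces us to pair steps and why the three-types classification of reduced geodesics (Remark \ref{rmk reduced}, Lemma \ref{lem trivial}) must be invoked to rule out a fixed endpoint persisting for two consecutive steps. I would handle this by a finite case check over the pairs $(i_{k+1}, i_{k+2}) \in (\Z/7\Z - \{0\})^2$: for each pair one verifies using the explicit constants $\bm a_i, \bm b_i, \bm c_i$ (computed in Section \ref{constants}) that $A_{k+1}A_{k+2}\, \overline{S_0}$ is contained in the open region $\overline{S_0}^{\,\circ} \cup \mathbf e_0$ with endpoints strictly interior, and extracts the worst-case contraction ratio $\lambda$. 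An alternative, softer route — perhaps cleaner to write — is to argue by contradiction: if $\operatorname{diam}(B_k S_0) \not\to 0$ then by compactness of $\overline{\mathfrak h}$ one extracts a subsequential limit region $R$ of positive diameter with $R \subseteq B_{k_j} S_0$, but the strict nesting $B_{k+2} S_0 \subsetneq B_k S_0$ together with $B_k \mathcal D \neq B_l \mathcal D$ contradicts $R$ being trapped in infinitely many distinct translates; this avoids explicit constants at the cost of a compactness argument whose details (that the limiting inclusions are strict) still rely on Lemma \ref{lem trivial}.
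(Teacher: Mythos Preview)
Your proposal has a genuine gap: the nesting $B_{k+2}S_0 \subset B_k S_0$ (equivalently $A_{k+1}A_{k+2}S_0 \subset S_0$) simply fails for many admissible pairs $(i_{k+1},i_{k+2})$. Geometrically, $A_{k+1}c_0$ is the geodesic through the edge $\mathbf e_{i_{k+1}}$ of $\mathcal D$, and for several choices of $i_{k+1}$ this geodesic crosses $c_0$; the same happens after two steps. So the boundary arcs of $B_kS_0$ and $B_{k+2}S_0$ can interlock rather than nest, and no finite case-check over pairs will produce a containment. Even where containment did hold, your contraction constant $\lambda$ would not be uniform in $k$: the spherical metric is not $SL_2(\R)$-invariant, so comparing $\operatorname{diam}(B_{k+2}S_0)$ to $\operatorname{diam}(B_kS_0)$ requires information about $B_k$, not just $A_{k+1}A_{k+2}$. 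A cross-ratio formulation could repair the metric issue, but not the failure of nesting.

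The paper's argument is quite different and uses an arithmetic input you did not invoke: the boundary geodesic $c_0=\varpi_{-\sqrt\eta\to\sqrt\eta}$ is itself one of the special geodesics of Proposition~\ref{geodesic lagrange}, hence projects to a \emph{closed} geodesic on $\Delta(2,3,7)\bs\mathfrak h$, with stabiliser generated by a hyperbolic $\gamma_0$. From this one shows that the $\Gamma$-translates of $c_0$ meeting $c_0$ fall into finitely many $\gamma_0$-orbits, and that points on $\gamma_0^m c^{(i)}$ accumulate only at $\pm\sqrt\eta$ as $|m|\to\infty$. This yields the crucial finiteness statement: for each fixed $l$, only finitely many $c_k$ meet $c_l$. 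One then extracts a strictly increasing subsequence $(l_n)$ along which genuine nesting $S_k\subsetneq S_{l_{n-1}}$ holds for all $k\ge l_n$, and finishes by a compactness/proper-discontinuity contradiction showing $\bigcap_n S_{l_n}=\{\alpha\}$. Your ``softer route'' is closer in spirit to this, but as written it presupposes the very nesting that fails; what is missing is exactly the closed-geodesic finiteness mechanism that replaces nesting-every-two-steps by eventual nesting along a subsequence.
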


\begin{cor}\label{cor convergence}
The associated formal continued fraction $\bm x((i_1, i_2, \dots))$ converges to $B_0^{-1}\alpha$ in the regularized sense, i.e., 
\begin{align}
\lim_{k \to \infty} \bm x^{reg}_k((i_1, i_2, \dots))=B_0^{-1}\alpha
\end{align}
\end{cor}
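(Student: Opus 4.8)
The plan is to obtain Corollary \ref{cor convergence} as an immediate consequence of Theorem \ref{thm convergence}; there is no new analytic content, only a matching of notation. The first step is to identify the purely combinatorial quantity $\bm x^{reg}_k((i_1,i_2,\dots))$ with a geometric one. Setting $A_j := g_7^{i_j}g_2$ and recalling that $g_7^i g_2 z = \bm a_i - \bm b_i/(\bm c_i + z)$, one checks by unwinding the nested fraction on the right-hand side of (\ref{conv GCF}) — reading it from the inside out, where the innermost step corresponds to substituting $z=0$ into $A_k\cdot z$ — that this right-hand side is exactly the linear fractional transformation $(A_1 A_2 \cdots A_k)\cdot 0$. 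Hence $\bm x^{reg}_k((i_1,i_2,\dots)) = (A_1\cdots A_k)\cdot 0$.

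Next I would match this with the matrices appearing in Theorem \ref{thm convergence}. Since by hypothesis $(i_1,i_2,\dots)$ is the geodesic continued fraction expansion of $\varpi$ attached to $B_0$, the theorem's matrices are $B_k = B_0 A_1\cdots A_k$, so $A_1\cdots A_k = B_0^{-1}B_k$ and therefore $\bm x^{reg}_k((i_1,i_2,\dots)) = B_0^{-1}(B_k\cdot 0)$. One then observes that $0 \in S_0$, since $S_0 = \{w \in \C \mid |w|\le\sqrt{\eta},\ \im w \ge 0\}$ plainly contains the origin. Applying Theorem \ref{thm convergence} with $z = 0$ gives $B_k\cdot 0 \to \alpha$ in $\P^1(\C)$, and because $z \mapsto B_0^{-1}z$ is a homeomorphism of $\P^1(\C)$ one may pass to the limit:
\begin{align}
\lim_{k\to\infty}\bm x^{reg}_k((i_1,i_2,\dots)) = B_0^{-1}\left(\lim_{k\to\infty}B_k\cdot 0\right) = B_0^{-1}\alpha,
\end{align}
which is the assertion.

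I do not anticipate any real obstacle: everything substantive is already packaged in Theorem \ref{thm convergence}, and what remains is the bookkeeping just described together with the trivial remark $0\in S_0$. As a variant that avoids introducing $B_0$ into the argument at all, one could instead apply Theorem \ref{thm convergence} directly to the reduced geodesic $B_0^{-1}\varpi = \varpi_{B_0^{-1}\beta\to B_0^{-1}\alpha}$, whose geodesic continued fraction expansion is $\llbracket 1; i_1,i_2,\dots\rrbracket_{\Delta(2,3,7)}$ by Proposition \ref{prop GCF} (1); for that geodesic the theorem's matrices are literally $A_1\cdots A_k$, and taking $z=0\in S_0$ yields $\lim_k (A_1\cdots A_k)\cdot 0 = B_0^{-1}\alpha$ at once.
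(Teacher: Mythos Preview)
Your proof is correct and follows exactly the same approach as the paper: the paper's one-line argument is simply ``This follows from $0 \in S_0$ and $B_k 0 = B_0 A_1\cdots A_k 0 = B_0\,\bm x^{reg}_k((i_1,i_2,\dots))$.'' Your write-up just unpacks this identity in more detail (and your alternative variant at the end is an equally valid rephrasing).
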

\begin{proof}
This follows from $0 \in S_0$ and $B_k0=B_0A_1\cdots A_k0=B_0\bm x^{reg}_k((i_1,i_2,\dots))$.
\end{proof}

By the explicit computation of $\bm a_i, \bm b_i, \bm c_i$ in Section \ref{constants}, we have $|\bm a_1|=|\bm a_{-1}|<|\bm a_2|=|\bm a_{-2}|<\sqrt{\eta} < |\bm a_3|=|\bm a_{-3}|$, cf. (\ref{ai value}). From this and Theorem \ref{thm convergence}, we can also say a little bit about the convergence in the traditional sense. 
\begin{cor} \label{conv cor2}
We keep the notations in Theorem \ref{thm convergence}
\begin{enumerate}[$(1)$]
\item Let $(k_l)_{l \geq 1} $ be the subsequence of $(k)_{k\in \Z_{\geq 1}}$ consisting of those $k\geq 1$ such that $i_k\neq 3,4$ in $\Z/7\Z$. Then we have 
\begin{align}
\lim_{l\to \infty}\bm x^{trad}_{k_l}((i_1,i_2, \dots))=B_0^{-1}\alpha. 
\end{align}
\item In particular, if $i_k\neq 3,4$ in $\Z/7\Z$ for all sufficiently large $k \geq 1$, then the associated formal continued fraction $\bm x(\bm i)$ converges to $B_0^{-1}\alpha$ in the traditional sense.
\end{enumerate}
\end{cor}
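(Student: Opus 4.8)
The plan is to read each traditional convergent $\bm x^{trad}_k$ as a point of the shrinking family $\{B_kS_0\}$ appearing in Theorem \ref{thm convergence}, which is available exactly at the indices $k$ for which the coefficient $\bm a_{i_{k+1}}$ lies in $S_0$.

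Two ingredients are needed. First, I would record from the definition (\ref{conv trad}) of the traditional convergent that $\bm x^{trad}_k((i_1,i_2,\dots)) = (A_1\cdots A_k)\,\bm a_{i_{k+1}}$, and since $B_k=B_0A_1\cdots A_k$ this reads $B_0\,\bm x^{trad}_k=B_k\,\bm a_{i_{k+1}}$ (cf.\ (\ref{conv trad2})). Second, from the explicit computation of $\bm a_i,\bm b_i,\bm c_i$ in Section \ref{constants} — concretely from (\ref{ai value}) — the numbers $\bm a_i\in L=\Q(\sqrt{\eta})\subset\R$ are real and satisfy $|\bm a_{\pm1}|<|\bm a_{\pm2}|<\sqrt{\eta}<|\bm a_{\pm3}|$. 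Since $\{3,4\}=\{\pm3\}$ and $\{1,2,5,6\}=\{\pm1,\pm2\}$ in $\Z/7\Z$, this says precisely $i\neq3,4 \Leftrightarrow |\bm a_i|<\sqrt{\eta}$; and when $|\bm a_i|<\sqrt{\eta}$ the real number $\bm a_i$ has $\im{\bm a_i}=0\ge0$, so $\bm a_i\in S_0=\{w:|w|\le\sqrt{\eta},\ \im w\ge0\}$.

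With these in hand I would invoke Theorem \ref{thm convergence}. For every index $k$ with $i_{k+1}\neq3,4$ one has $B_0\,\bm x^{trad}_k=B_k\,\bm a_{i_{k+1}}\in B_kS_0$; for the remaining $k$, set $z_k:=B_k0\in B_kS_0$ (using $0\in S_0$, exactly as in the proof of Corollary \ref{cor convergence}). Then the sequence $(z_k)$ with $z_k:=B_k\,\bm a_{i_{k+1}}$ when $i_{k+1}\neq3,4$ and $z_k:=B_k0$ otherwise satisfies $z_k\in B_kS_0$ for all $k$, so Theorem \ref{thm convergence} gives $\lim_{k\to\infty}z_k=\alpha$. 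Restricting to the subsequence $(k_l)$ of indices at which $\bm a_{i_{k_l+1}}\in S_0$, i.e.\ at which $i_{k_l+1}\neq3,4$ (this is the subsequence in the statement, up to the evident reindexing), we have $z_{k_l}=B_0\,\bm x^{trad}_{k_l}$, hence $\lim_l\bm x^{trad}_{k_l}=B_0^{-1}\alpha$; this gives (1). For (2): if $i_k\neq3,4$ for all sufficiently large $k$, then a fortiori $i_{k+1}\neq3,4$ for all sufficiently large $k$, so this subsequence is cofinite in $\Z_{\ge1}$, and therefore the full sequence satisfies $\lim_k\bm x^{trad}_k=B_0^{-1}\alpha$, i.e.\ $\bm x(\bm i)$ converges to $B_0^{-1}\alpha$ in the traditional sense.

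I do not expect a genuine obstacle, since the statement is essentially a corollary of Theorem \ref{thm convergence}. The points that need care are: (a) the bookkeeping with the auxiliary element $B_0$, so that the limit comes out as $B_0^{-1}\alpha$ and not $\alpha$; (b) using the explicit values (\ref{ai value}), rather than a crude bound, to pin down which residues $i$ force $\bm a_i\in S_0$, together with the trivial but essential remark that the $\bm a_i$ are real, so their imaginary parts are automatically nonnegative; and (c) the padding step, which upgrades the partial information ``$\bm x^{trad}_k\in B_kS_0$ for certain $k$'' into a genuine sequence $(z_k)\in\prod_k B_kS_0$ to which Theorem \ref{thm convergence} applies verbatim.
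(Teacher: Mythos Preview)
Your argument is correct and follows exactly the route the paper indicates: the paper does not write out a separate proof of this corollary but derives it, in the sentence immediately preceding the statement, from Theorem \ref{thm convergence} together with the inequality $|\bm a_{\pm1}|<|\bm a_{\pm2}|<\sqrt{\eta}<|\bm a_{\pm3}|$ of (\ref{ai value}), which is precisely what you do. Your observation about the index shift is well taken: the condition that puts $\bm x^{trad}_k=B_0^{-1}B_k\,\bm a_{i_{k+1}}$ inside $B_0^{-1}B_kS_0$ is $i_{k+1}\neq 3,4$ rather than $i_k\neq 3,4$, and for part (2) this distinction is immaterial since the hypothesis is cofinite either way.
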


\paragraph{Proof of the convergence}
Here we give a proof of Theorem \ref{thm convergence}. 

Put $\Gamma:=\Delta(2,3,7)$ for simplicity. 
We may assume that $\varpi=\varpi_{\beta\to \alpha}$ is reduced and $B_0=1$. 
Recall that $c_0=\varpi_{-\sqrt{\eta} \to \sqrt{\eta}}$ is the oriented geodesic on $\mathfrak h$ which contains the edge $\mathbf e_0$.
We denote by $u_0:=\sqrt{\eta}$, $v_0:=-\sqrt{\eta}$ the two endpoints of $c_0$. 
For $k\geq 0$ we define $c_k:=B_kc_0$, $u_k:=B_ku_0$, $v_k:=B_kv_0$, $S_k:=B_kS_0$ to be the $B_k$-translations of the corresponding objects. 
By the definition of the geodesic continued fraction algorithm, $B_k^{-1}\varpi$ is reduced. Thus we define $P_k, Q_k \in \varpi$ ($k\geq 0$) so that $\varpi \cap B_k\mathcal D=\overrightarrow{P_kQ_k}$. Then by Proposition \ref{prop GCF} (2) {\rm (i)}, we have $P_k \in \varpi \cap c_k$ and $Q_k=P_{k+1} \in \varpi \cap c_{k+1}$. See Figure \ref{fig conv}.

\begin{figure}[hbtp]
\centering
    \includegraphics[clip,height=9cm]{./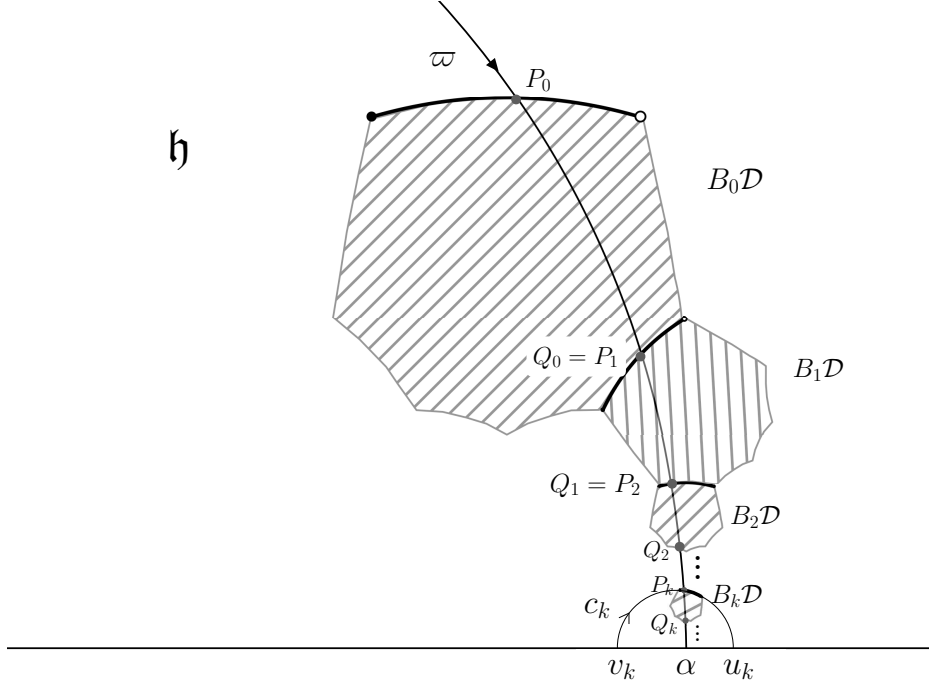} 
    \caption{Geodesic continued fraction algorithm of $\varpi$}\label{fig conv}
\end{figure}


In the following we prepare six technical lemmas, most of which are intuitively clear.  
We adopt the usual notation of intervals in $\P^1(\R)=\R \cup \{\infty\}$ by defining
\begin{align}
(u,v)=
\begin{cases}\label{extended interval}
(u,v) &\text{ if } u, v \in \R \text{ and } u<v\\
(u,\infty)\cup\{\infty\} \cup (-\infty, v) &\text{ if } v, u \in \R \text{ and } v<u\\
(u,\infty) &\text{ if } u \in \R, v=\infty\\
(-\infty,v) &\text{ if } v \in \R, u=\infty,
\end{cases}
\end{align} 
for $u,v \in \R \cup \{\infty\}$ such that $u\neq v$.
\begin{lem}\label{pf lem0}
For all $k \geq 0$ we have $\alpha \in (v_k, u_k)$ and $\beta \in (u_k, v_k)$.
\end{lem}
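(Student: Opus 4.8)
\textbf{Proof strategy for Lemma \ref{pf lem0}.}

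The plan is to prove both statements simultaneously by induction on $k$, using the recursive structure of the geodesic continued fraction algorithm. For the base case $k=0$, recall that $\varpi_0 = B_0^{-1}\varpi = \varpi$ is reduced (we have reduced to $B_0 = 1$), so by Definition \ref{dfn reduced} we know $\varpi$ enters $\mathcal D$ from $\mathbf e_0 \subset c_0$ and $|\alpha| < \sqrt{\eta}$. Since $u_0 = \sqrt{\eta}$ and $v_0 = -\sqrt{\eta}$, the condition $|\alpha| < \sqrt{\eta}$ says precisely $\alpha \in (-\sqrt{\eta}, \sqrt{\eta}) = (v_0, u_0)$. For $\beta$: the oriented geodesic $\varpi$ enters $\mathcal D$ through the edge $\mathbf e_0$ on $c_0$, so the point $P_0 \in \varpi \cap c_0$ lies between $\beta$ and $\alpha$ on the geodesic, with $\alpha$ on the ``inside'' of $c_0$ (in $S_0$) and $\beta$ on the ``outside''; since the endpoints of $\varpi$ are separated by the endpoints $u_0, v_0$ of $c_0$ on $\P^1(\R)$, and $\alpha \in (v_0, u_0)$, we must have $\beta \in (u_0, v_0)$, the complementary arc.

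For the inductive step, assume $\alpha \in (v_k, u_k)$ and $\beta \in (u_k, v_k)$. Applying $B_k^{-1}$, this is equivalent to $B_k^{-1}\alpha \in (v_0, u_0)$ and $B_k^{-1}\beta \in (u_0, v_0)$, i.e., to the statement that $\varpi_k = B_k^{-1}\varpi$ is reduced (together with the fact, already guaranteed by the algorithm and Lemma \ref{lem trivial}, that it enters $\mathcal D$ from $\mathbf e_0$). Now $\varpi_{k+1} = A_{k+1}^{-1}\varpi_k = (g_7^{i_{k+1}}g_2)^{-1}\varpi_k$ is again reduced by Lemma \ref{lem trivial}, so $A_{k+1}^{-1}\alpha \in (v_0, u_0)$ and $A_{k+1}^{-1}\beta \in (u_0, v_0)$; applying $B_{k+1} = B_k A_{k+1}$ gives $\alpha \in (v_{k+1}, u_{k+1})$ and $\beta \in (u_{k+1}, v_{k+1})$, completing the induction.

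The main point requiring care is the bookkeeping with the extended intervals of (\ref{extended interval}) under the action of $SL_2(\R)$: a linear fractional transformation sends an arc of $\P^1(\R)$ bounded by $u_0, v_0$ to the arc bounded by their images, but one must check orientations are preserved so that $(v_0, u_0)$ really maps to $(v_{k+1}, u_{k+1})$ and not to the complementary arc. This follows from the fact that elements of $SL_2(\R)$ act on $\P^1(\R)$ as orientation-preserving homeomorphisms of the circle; equivalently, one can track the orientation of $\varpi$ directly, noting that the algorithm is designed so that $\alpha$ always stays the ``forward'' endpoint on the inside of $c_k = B_k c_0$ and $\beta$ the ``backward'' endpoint outside. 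Alternatively, and perhaps more cleanly, one can simply observe that the equivalence ``$\varpi_k$ reduced $\iff$ $\alpha \in (v_k, u_k)$ and $\beta \in (u_k, v_k)$'' is immediate from Definition \ref{dfn reduced} and the $SL_2(\R)$-equivariance of the whole picture, so that the lemma is just a restatement of the fact that every $\varpi_k$ produced by the algorithm is reduced.
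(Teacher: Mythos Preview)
Your proof is correct and your final ``alternatively'' paragraph is exactly the paper's argument: since each $\varpi_k = B_k^{-1}\varpi$ is reduced by construction of the algorithm, one has $B_k^{-1}\alpha \in (v_0,u_0)$ and $B_k^{-1}\beta \in (u_0,v_0)$, and applying $B_k$ (which, as an element of $SL_2(\R)$, sends the interval $(u,v)$ to $(B_k u, B_k v)$) gives the result. The inductive framework you set up beforehand is harmless but unnecessary---the inductive hypothesis is never actually used, since reducedness of $\varpi_{k+1}$ comes directly from the algorithm rather than from the interval statement for $\varpi_k$.
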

\begin{proof}
By the definition of the geodesic continued fraction algorithm, $B_k^{-1}\varpi$ is reduced, and hence we see $B_k^{-1}\alpha \in (v_0,u_0)$ and $B_k^{-1}\beta \in (u_0, v_0)$. Now the lemma follows from the fact that $SL_2(\R)$ action preserves the intervals in $\R \cup \{\infty\}$, i.e., $g(u,v)=(gu,gv)$ for $g\in SL_2(\R)$ and $u, v \in \R\cup \{\infty\}$ such that $u\neq v$.
\end{proof}
Next we observe the behavior of $c_k$ as $k$ goes to $\infty$, or more generally, the behavior of $\Gamma$-translations of $c_0$. 
\begin{lem}\label{pf lem1}
\begin{enumerate}[$(1)$]
\item The projection $\overline{c_0}$ becomes a closed geodesic on $\Gamma \bs \mathfrak h$, and there exists a hyperbolic element $\gamma_0 \in \Gamma$ such that $\Gamma_{c_0} (=\{\gamma \in \Gamma \mid \gamma c_0=c_0\})=\{\pm \gamma_0^{k} \mid k\in \Z\}$.
\item In particular, we can decompose $c_0$ into a disjoint union of segments as
\begin{align}
c_0 = \coprod_{k \in \Z}\gamma_0^k \bsr{\tau_3, \gamma_0\tau_3}.
\end{align}
\end{enumerate}
\end{lem}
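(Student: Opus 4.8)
The plan is to prove part (1) first, since part (2) is then immediate. The key observation is that $c_0 = \varpi_{-\sqrt\eta \to \sqrt\eta}$ is precisely the geodesic containing the edge $\mathbf e_0 = \bsr{\tau_3,\tau_3'}$, and its two endpoints $\pm\sqrt\eta$ are conjugate over $F=\Q(\eta)$ (they are the roots of $X^2-\eta$). So I want to invoke Proposition \ref{geodesic lagrange}. To do this I need to check that $\alpha = \sqrt\eta$ and $\beta = -\sqrt\eta$ are of the form (\ref{alpha beta}) for suitable $z,w \in L = F(\sqrt\eta)$ with $D_{z,w} > 0$. Taking, for instance, $z = \bar z$ (i.e. $z \in F$) and $w \in F^\times$, we get $\alpha,\beta = \pm \sqrt{4\eta w^2}/(2w) = \pm\sqrt\eta$ when $a = \eta$; concretely $z=0$, $w=1$ gives $D_{0,1} = 4\eta > 0$ and endpoints $\pm\sqrt\eta$, exactly $\pm\sqrt\eta = \pm\sqrt{D_{0,1}}/2$. (Indeed $\theta_{0,1} = i$ in the notation of (\ref{iota3}), and $i^2 = \eta$, so $i$ fixes $\pm\sqrt\eta$.) Hence condition (iii) of Proposition \ref{geodesic lagrange} holds, so condition (ii) holds: there is a hyperbolic $\gamma_0 \in \Gamma_{c_0}$, and condition (i) holds: $\overline{c_0}$ is a closed geodesic on $\Gamma\bs\mathfrak h$. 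This gives the first assertion of (1).

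For the structure of the stabilizer, I would apply Lemma \ref{lem ass field}(5),(6): with $(z,w) = (0,1)$, the lemma identifies $\Gamma_{c_0} = \Gamma_{\varpi_{-\sqrt\eta \to \sqrt\eta}}$ with $\mathcal O_{0,1}^1$, which by part (6) of that lemma is $\{\pm\varepsilon_0^k \mid k \in \Z\}$ for some fundamental unit $\varepsilon_0$ (it has $\Z$-rank one). Transporting back through $\iota$, this says $\Gamma_{c_0} = \{\pm \gamma_0^k \mid k \in \Z\}$ where $\gamma_0 = \iota(\varepsilon_0)$, and $\gamma_0 \neq \pm 1$ so it is hyperbolic by Lemma \ref{lem hyp}. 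That completes (1).

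For part (2), I would use that $\gamma_0$ acts on the geodesic $c_0$ (a copy of $\R$ via arclength parametrization) as a translation by the fixed hyperbolic length $\ell = \ell(\overline{c_0}) > 0$, since $\gamma_0$ is hyperbolic with axis $c_0$. The point $\tau_3$ lies on $c_0$ (it is an endpoint of $\mathbf e_0 \subset c_0$), and the orbit $\{\gamma_0^k \tau_3\}_{k\in\Z}$ is an arithmetic progression of points along $c_0$ spaced by $\ell$. Consequently $c_0$ decomposes as the disjoint union $\coprod_{k\in\Z} \gamma_0^k \bsr{\tau_3, \gamma_0\tau_3}$ of the half-open fundamental segments for this $\Z$-action. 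I should make sure the orientation and the choice of $\gamma_0$ (versus $\gamma_0^{-1}$) are consistent with the stated half-open interval $\bsr{\tau_3,\gamma_0\tau_3}$ — i.e. replace $\gamma_0$ by $\gamma_0^{-1}$ if necessary so that $\tau_3 \leq_{c_0} \gamma_0\tau_3$; this does not affect the statement since $\{\pm\gamma_0^{\pm k}\}$ is the same group.

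The only mild subtlety — and the place I'd be most careful — is the very last point: checking that the $\Z$-orbit of $\tau_3$ under $\gamma_0$ partitions $c_0$ into these exact half-open segments, which amounts to knowing that $\langle \gamma_0 \rangle$ acts on $c_0 \cong \R$ by translations through a single generator, with $\bsr{\tau_3,\gamma_0\tau_3}$ a strict fundamental domain. This is standard for a hyperbolic element acting on its own axis, but one should phrase it cleanly. Everything else is a direct application of the already-established Proposition \ref{geodesic lagrange} and Lemma \ref{lem ass field} to the explicit geodesic $c_0$.
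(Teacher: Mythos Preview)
Your proof is correct and follows essentially the same route as the paper: invoke Proposition~\ref{geodesic lagrange} by exhibiting $(z,w)$ with $D_{z,w}=4\eta>0$ and endpoints $\pm\sqrt\eta$, then use Lemma~\ref{lem ass field}(5),(6) to identify $\Gamma_{c_0}$ with a rank-one norm-one unit group; for (2) use $\tau_3\in c_0$ and the translation action of $\gamma_0$ on its axis. The only cosmetic difference is that the paper takes $(z,w)=(1,1)$ while you take $(z,w)=(0,1)$, but since $z-\bar z=0$ in both cases the computations and the resulting subfield $K_{z,w}=F[i]$ are identical.
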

\begin{proof}
(1) follows from Proposition \ref{geodesic lagrange} and Lemma \ref{lem ass field} (5), (6). Indeed by choosing $z=w=1$, we obtain $D_{z,w}=(z-\bar z)^2 + 4\eta w\bar w=4\eta$ and $\pm \sqrt{\eta} = \frac{1}{2w}((z-\bar z) \pm \sqrt{D_{z,w}})$. 
(2) follows from (1) and $\tau_3\in c_0$.
\end{proof}

We consider the set 
\begin{align}
\mathbb I :=\{\gamma c_0 \mid \gamma \in \Gamma, \#(\gamma c_0 \cap c_0) =1\}
\end{align}
of all $\Gamma$-translations of $c_0$ which intersect with $c_0$ at one point in $\mathfrak h$. Note that if two geodesics on $\mathfrak h$ has an intersection, then either they intersect at one point or they coincide up to the orientation. 
\begin{lem}\label{pf lem2}
There exist $c^{(1)}, \dots, c^{(r)} \in \mathbb I$ (for some $r\geq 0$) such that 
\begin{align}
\mathbb I = \bigcup_{k \in \Z} \{\gamma_0^k c^{(1)}, \dots, \gamma_0^k c^{(r)} \}.
\end{align}
If $\mathbb I=\emptyset$, then we assume $r=0$ and the both sides are the empty set.
\end{lem}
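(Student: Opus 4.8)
The plan is to show that $\mathbb{I}$ is a single orbit (or a finite union of orbits) under the action of the cyclic group $\langle \gamma_0 \rangle = \Gamma_{c_0}$, and that this orbit decomposition has only finitely many $\langle \gamma_0 \rangle$-orbit representatives. First I would observe that $\Gamma_{c_0}$ acts on $\mathbb{I}$: if $\gamma c_0 \in \mathbb{I}$, meaning $\#(\gamma c_0 \cap c_0) = 1$, then for $\delta \in \Gamma_{c_0}$ we have $\delta\gamma c_0 \cap c_0 = \delta\gamma c_0 \cap \delta c_0 = \delta(\gamma c_0 \cap c_0)$, which is still a single point, so $\delta\gamma c_0 \in \mathbb{I}$. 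By Lemma \ref{pf lem1}(1), $\Gamma_{c_0} = \{\pm\gamma_0^k \mid k \in \Z\}$, and since $\pm 1$ act trivially on $\mathfrak{h}$, the action of $\Gamma_{c_0}$ on $\mathbb{I}$ factors through the cyclic group generated by $\gamma_0$. So $\mathbb{I}$ is a union of $\langle\gamma_0\rangle$-orbits, and the statement reduces to showing there are only finitely many such orbits, i.e.\ $r < \infty$.

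For the finiteness, the key geometric fact is that any $c = \gamma c_0 \in \mathbb{I}$ crosses $c_0$ at a point $P \in c_0$, and using the decomposition $c_0 = \coprod_{k \in \Z} \gamma_0^k \bsr{\tau_3, \gamma_0\tau_3}$ from Lemma \ref{pf lem1}(2), there is a unique $k \in \Z$ with $\gamma_0^{-k} P \in \bsr{\tau_3, \gamma_0\tau_3}$; replacing $c$ by $\gamma_0^{-k} c$ we may assume the crossing point lies in the compact segment $\bss{\tau_3, \gamma_0\tau_3} \subset c_0$. So it suffices to bound the number of $\Gamma$-translates of $c_0$ meeting this fixed compact segment. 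Since $\Gamma$ acts properly discontinuously on $\mathfrak{h}$ and $c_0$ projects to a closed (hence compact) geodesic on $\Gamma\bs\mathfrak{h}$, the set of $\gamma c_0$ that intersect any fixed compact subset of $\mathfrak{h}$ is finite: concretely, pick a compact neighborhood $N$ of $\bss{\tau_3,\gamma_0\tau_3}$; if $\gamma c_0 \cap \bss{\tau_3,\gamma_0\tau_3} \neq \emptyset$ then $\gamma$ maps a point of the compact fundamental segment $\bsr{\tau_3,\gamma_0\tau_3}$ of $c_0$ into $N$, and proper discontinuity forces only finitely many cosets $\gamma \Gamma_{c_0}$ to occur. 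This gives finitely many $\langle\gamma_0\rangle$-orbit representatives $c^{(1)}, \dots, c^{(r)}$ in $\mathbb{I}$, which is exactly the claimed description (with $r = 0$ and both sides empty if no translate other than $c_0$ itself meets $c_0$ transversally).

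The main obstacle I anticipate is making the "proper discontinuity $\Rightarrow$ finiteness of intersecting translates" step fully rigorous while correctly handling the passage between the group $\Gamma$ and the quotient by $\Gamma_{c_0}$ — one must check that two translates $\gamma c_0$ and $\gamma' c_0$ coincide as subsets of $\mathfrak{h}$ precisely when $\gamma' \in \gamma\Gamma_{c_0}$ (this uses that $\Gamma_{c_0}$ is exactly the stabilizer of $c_0$ in $\Gamma$, which is Lemma \ref{pf lem1}(1) again, together with the fact that distinct geodesics meet in at most one point so a translate meeting $c_0$ in one point is genuinely distinct from $c_0$). I would also need to be slightly careful that the orbits $\langle\gamma_0\rangle c^{(i)}$ for different $i$ are genuinely distinct (otherwise merge them), so that the union in the statement, while perhaps not a priori disjoint, can be taken with the $c^{(i)}$ chosen as honest orbit representatives. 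All of this is routine once the proper-discontinuity finiteness is set up, so I would state that finiteness carefully and leave the orbit bookkeeping brief.
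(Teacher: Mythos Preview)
Your proposal is correct and follows essentially the same approach as the paper: both arguments use Lemma~\ref{pf lem1}(2) to reduce to intersections involving the compact fundamental segment $\bss{\tau_3,\gamma_0\tau_3}$, and then invoke proper discontinuity of $\Gamma$ for the finiteness. The only cosmetic difference is that the paper shifts on both sides simultaneously (finding $k,l$ so that $\gamma_0^{-k}\gamma\gamma_0^{l}$ maps the segment to meet itself), whereas you shift once and then pass to cosets $\gamma\Gamma_{c_0}$; these are the same computation, and your remark about needing to identify the stabilizer of $c_0$ with $\Gamma_{c_0}$ to translate ``finitely many cosets'' into ``finitely many geodesics'' is exactly what makes the two versions match.
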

\begin{proof}
First, since $\Gamma=\Delta(2,3,7)$ acts properly discontinuously on $\mathfrak h$, we have
\begin{align}
\# \{\gamma \in \Gamma \mid \#(\gamma \bss{\tau_3, \gamma_0\tau_3} \cap \bss{\tau_3, \gamma_0\tau_3})=1 \} < \infty
\end{align}
Put $\{\gamma^{(1)}, \dots \gamma^{(r)}\}:=\{\gamma \in \Gamma \mid \#(\gamma \bss{\tau_3, \gamma_0\tau_3} \cap \bss{\tau_3, \gamma_0\tau_3})=1 \}$, and set $c^{(i)}:=\gamma^{(i)}c_0 \in \mathbb I$ ($i=1, \dots, r$).
Now, take any $\gamma c_0 \in \mathbb I$. By Lemma \ref{pf lem1} (2) there exists $k, l \in \Z$ such that
\begin{align}
\#(\gamma \gamma_0^l \bsr{\tau_3, \gamma_0 \tau_3} \cap \gamma_0^k \bsr{\tau_3, \gamma_0 \tau_3})=1.
\end{align}
Then we get $\gamma= \gamma_0^{k}\gamma^{(i)}\gamma_0^{-l}$ for some $i\in \{1, \dots r\}$, and hence $\gamma c_0= \gamma_0^{k}c^{(i)}$. 
\end{proof}
For $i=1, \dots, r$, we denote by $u^{(i)} , v^{(i)} \in \R\cup \{\infty\}$ the two end points of $c^{(i)}$ such that $c^{(i)}=\varpi_{v^{(i)}\to u^{(i)}}$.
\begin{lem}\label{pf lem3}
For any $\epsilon>0$ there exists $N>0$ such that for any $k \in \Z$, $i \in \{1,\dots, r\}$ and $z \in \gamma_0^kc^{(i)}$ with $|k|>N$, we have either $|z-u_0|<\epsilon$ or $|z-v_0|<\epsilon$. 
\end{lem}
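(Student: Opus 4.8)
Lemma \ref{pf lem3} asserts that for the hyperbolic element $\gamma_0$ generating $\Gamma_{c_0}/\{\pm 1\}$, the $\gamma_0$-translates of each of the finitely many geodesics $c^{(i)}$ have all their points (not just endpoints) clustering near $u_0=\sqrt{\eta}$ or $v_0=-\sqrt{\eta}$ as the translation index tends to $\pm\infty$, uniformly in $i$.

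Let me think about how to prove this. The key fact is that $\gamma_0$ is hyperbolic with fixed points $u_0$ and $v_0$ (it fixes $c_0$, so it fixes the endpoints $\sqrt{\eta}$ and $-\sqrt{\eta}$). So conjugating $\gamma_0$ to a diagonal matrix $\mathrm{diag}(\lambda, \lambda^{-1})$ with $|\lambda| > 1$, the action of $\gamma_0^k$ on $\mathbb{P}^1(\mathbb{R}) - \{u_0, v_0\}$ pushes everything toward $u_0$ as $k \to +\infty$ and toward $v_0$ as $k \to -\infty$ (or vice versa).

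Now, each $c^{(i)}$ is a fixed geodesic — a compact arc in $\overline{\mathfrak{h}}$ with its two endpoints $u^{(i)}, v^{(i)} \in \mathbb{P}^1(\mathbb{R})$. Crucially, since $c^{(i)} \in \mathbb{I}$ intersects $c_0$ at exactly one point in $\mathfrak{h}$ (not coinciding with $c_0$), neither $u^{(i)}$ nor $v^{(i)}$ equals $u_0$ or $v_0$ — wait, actually I need to be careful. If $c^{(i)}$ shared an endpoint with $c_0$, they wouldn't intersect in $\mathfrak{h}$... actually two distinct geodesics sharing one endpoint on the boundary don't intersect in $\mathfrak{h}$. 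And if they shared both endpoints they'd be equal. So indeed $\{u^{(i)}, v^{(i)}\} \cap \{u_0, v_0\} = \emptyset$. Good.

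So the plan: conjugate so that $u_0 = \infty$, $v_0 = 0$ and $\gamma_0 = \mathrm{diag}(\lambda, \lambda^{-1})$ with $\lambda > 1$ (say). Then $\gamma_0^k z = \lambda^{2k} z$. Each $c^{(i)}$ is a Euclidean semicircle (or line) with endpoints $u^{(i)}, v^{(i)}$, both finite and nonzero. So $c^{(i)}$ is contained in an annulus $\{m^{(i)} \le |z| \le M^{(i)}\}$ with $0 < m^{(i)} \le M^{(i)} < \infty$. Taking $m = \min_i m^{(i)}$, $M = \max_i M^{(i)}$, every point of every $c^{(i)}$ lies in $\{m \le |z| \le M\}$, hence every point of $\gamma_0^k c^{(i)}$ lies in $\{\lambda^{2k} m \le |z| \le \lambda^{2k} M\}$. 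For $k$ large positive this forces $|z|$ large (near $\infty = u_0$); for $k$ large negative, $|z|$ small (near $0 = v_0$). Translating back to the original coordinates via the fixed conjugating Möbius map (which is Lipschitz on compact subsets of $\mathbb{P}^1(\mathbb{R})$ away from its pole), this gives the result. I would write this cleanly: choose the conjugation, bound the $c^{(i)}$ in an annulus (finiteness of $r$ is essential here and comes from Lemma \ref{pf lem2}), propagate the bound under $\gamma_0^k$, and transfer back.

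The main obstacle — really the only subtlety — is handling the transfer back to the original coordinates and the possibility that one of $u_0, v_0$ is $\infty$; one must phrase "neighborhood of $u_0$/$v_0$" using the topology of $\mathbb{P}^1(\mathbb{R})$ (or $\mathbb{P}^1(\mathbb{C})$) rather than Euclidean distance, or else do a preliminary Möbius change of coordinates sending $\{u_0, v_0, \infty\}$ to convenient points and note that such a map is a homeomorphism of $\mathbb{P}^1(\mathbb{R})$, so $\epsilon$-neighborhoods correspond (uniformly, on the relevant compact set). Everything else is the elementary observation that a hyperbolic element's powers uniformly contract compact sets away from the repelling fixed point toward the attracting one, which is immediate from the diagonal form. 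I would also remark that the uniformity in $i$ is free because $r < \infty$.
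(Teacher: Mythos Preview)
Your proof is correct and follows essentially the same approach as the paper: both use that $\gamma_0$ is hyperbolic with fixed points $u_0,v_0$ and that the endpoints $u^{(i)},v^{(i)}$ of each $c^{(i)}$ lie off $\{u_0,v_0\}$ (since $\#(c^{(i)}\cap c_0)=1$), so iterates of $\gamma_0^{\pm 1}$ drive the geodesics to $u_0$ or $v_0$. The paper's proof is terser---it tracks only the endpoints $\gamma_0^k u^{(i)},\gamma_0^k v^{(i)}$ and leaves implicit the passage from endpoint convergence to convergence of all points on the semicircle---whereas your annulus argument after conjugation makes this step explicit; also note that here $u_0=\sqrt{\eta}$ and $v_0=-\sqrt{\eta}$ are both finite, so your worry about $\infty$ does not arise.
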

\begin{proof}
Now $\gamma_0$ is a hyperbolic element with fixed points $u_0$ and $v_0$. We may assume $u_0$ is the attracting point. On the other hand, we have $u^{(i)}, v^{(i)} \notin \{u_0, v_0\}$ because $\# (c^{(i)}\cap c_0)=1$. Therefore we get $\lim_{k\to \infty} \gamma_0^k u^{(i)}=\lim_{k\to \infty}\gamma_0^k v^{(i)}=u_0$ and $\lim_{k\to -\infty}\gamma_0^k u^{(i)}=\lim_{k\to -\infty}\gamma_0^k v^{(i)}=v_0$. This proves the lemma.
\end{proof}

\begin{lem}\label{pf lem3.5}
\begin{enumerate}[$(1)$]
\item For $k \neq l$ we have $c_k\neq c_l$ as subsets of $\mathfrak h$. 
\item For any fixed $l \geq 0$, we have $P_k \in S_l$ for all $k \geq l$. 
\item For any fixed $k \geq 0$, we have $P_k \notin S_l$ for all $l \geq k+2$.
\end{enumerate}
\end{lem}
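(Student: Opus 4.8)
The three assertions are geometric and follow from the structure already established: $\varpi$ is reduced with $\varpi \cap B_k\mathcal D = \overrightarrow{P_kQ_k}$, $Q_k = P_{k+1}$, and $P_k \in c_k$, $P_{k+1} \in c_{k+1}$ (Proposition \ref{prop GCF}(2)(i)), together with the ordering $P_k <_\varpi P_{k+2}$ from \ref{prop GCF}(2)(ii). The plan is as follows. For (1), I would argue by contradiction: if $c_k = c_l$ with $k < l$, then $B_k c_0 = B_l c_0$, so $B_k^{-1}B_l$ stabilizes $c_0$ and hence, by Lemma \ref{pf lem1}(1), $B_k^{-1}B_l = \pm\gamma_0^m$ for some $m$. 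Unwinding, $\varpi$ meets both $B_k\mathcal D$ and $B_l\mathcal D = \pm\gamma_0^m B_k \mathcal D$ along segments lying on the common geodesic line through $c_k$; but a reduced geodesic enters $\mathcal D$ transversally through the \emph{interior} portion of $\mathbf e_0$ in the three cases of Lemma \ref{lem trivial}, and $\varpi$ cannot contain the whole edge $\mathbf e_0$ (that is the non-reduced geodesic $c_0$ itself, Lemma \ref{lem trivial}(3)); combined with Proposition \ref{prop GCF}(2)(iii) which already gives $B_k\mathcal D \neq B_l\mathcal D$, one forces $k = l$. In fact (1) should follow quickly just from \ref{prop GCF}(2)(iii) plus the observation that $c_k$ (being the geodesic carrying the edge $B_k\mathbf e_0$ of $B_k\mathcal D$) determines $B_k\mathcal D$ among the translates appearing in the algorithm.

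For (2): fix $l \geq 0$. By \ref{prop GCF}(2)(i), $P_k \leq_\varpi P_{k+1}$ for all $k$, so for $k \geq l$ the point $P_k$ lies on the sub-arc of $\varpi$ from $P_l$ onward. The arc $\varpi \cap B_l\mathcal D = \overrightarrow{P_lQ_l}$ enters $B_l\mathcal D$ through $B_l\mathbf e_0 \subset c_l$, and since $B_l^{-1}\varpi$ is reduced, $B_l^{-1}\alpha$ has $|B_l^{-1}\alpha| < \sqrt{\eta}$, i.e. $B_l^{-1}\alpha \in S_0$; hence the whole forward ray $\brs{P_l, \alpha}$ of $\varpi$ from the entry point $P_l$ to its endpoint $\alpha$ stays inside $B_l S_0 = S_l$ (it enters the semicircle through its boundary arc $c_l$ and exits at the boundary point $\alpha \in S_l$, and $S_l$ is geodesically convex in $\overline{\mathfrak h}$). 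Since every $P_k$ with $k \geq l$ lies on $\brs{P_l,\alpha}$, we get $P_k \in S_l$.

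For (3): fix $k \geq 0$ and let $l \geq k+2$. By iterating \ref{prop GCF}(2)(ii) we have $P_k <_\varpi P_{k+2} \leq_\varpi P_l$, so $P_k$ lies strictly \emph{before} $P_l$ on $\varpi$; equivalently $P_k \in \brr{\beta, P_{l}} \subset \brr{\beta, P_{l-1}}$ (using $P_{l-1} = Q_{l-2}$ etc.). Now $P_l \in c_l = B_l c_0$ and $P_{l-1} \in c_{l-1}$ are the two endpoints of the arc $\varpi \cap B_{l-1}\mathcal D$... more directly: $P_l$ is the entry point of $\varpi$ into $B_l\mathcal D$, so the backward ray $\brs{\beta, P_l}$ of $\varpi$ lies \emph{outside} the open semicircle $S_l^\circ$; indeed reducedness of $B_l^{-1}\varpi$ gives $|B_l^{-1}\beta| > \sqrt{\eta}$ (the exterior condition), so $\brr{\beta, P_l}$ lies in $\mathfrak h - S_l$. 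Since $P_k \in \brr{\beta, P_l}$ and $P_k \notin c_l$ (the latter because $c_k \neq c_l$ by (1), and $P_k \in c_k$ — wait, one must also exclude $P_k$ being on $c_l$ without $c_k = c_l$; this is handled by $P_k \in \brr{\beta, P_l}$ which is disjoint from the boundary arc as well once we note $P_k \neq P_l$), we conclude $P_k \notin S_l$.

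The main obstacle is the careful bookkeeping in (3): one must pin down that $P_k$, lying strictly $\varpi$-before the entry point $P_l$ into $B_l\mathcal D$, is genuinely in the \emph{open exterior} $\overline{\mathfrak h} - S_l$ and not merely in $\overline{\mathfrak h} - S_l^\circ$ — i.e. ruling out $P_k \in c_l$. This is where the strict inequality $P_k <_\varpi P_l$ from \ref{prop GCF}(2)(ii) (via $k \leq l-2$) is essential: it guarantees $P_k \neq P_l$, and together with the reducedness inequality $|B_l^{-1}\beta| > \sqrt{\eta}$ (so that the backward arc leaves $S_l$ immediately and never returns, by convexity of $S_l$) it forces $P_k$ off $S_l$ entirely. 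The other parts are routine applications of convexity of $S_l$ and the ordering properties already proved in Proposition \ref{prop GCF}.
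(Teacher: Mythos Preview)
Your arguments for (2) and (3) are correct and match the paper's approach. The paper packages them via a single identity: since $\varpi \neq c_l$ (reducedness gives $|B_l^{-1}\alpha| < \sqrt{\eta}$, whereas the endpoints of $c_0$ are $\pm\sqrt{\eta}$), one has $\varpi \cap c_l = \{P_l\}$; then Lemma~\ref{pf lem0} and geodesic convexity of $S_l$ give $\varpi \cap S_l = \bsr{P_l,\alpha}$, from which (2) and (3) follow at once by the ordering in Proposition~\ref{prop GCF}(2)(i),(ii). Your ``main obstacle'' about $P_k \in c_l$ dissolves once you isolate $\varpi \cap c_l = \{P_l\}$: the open backward arc $\brr{\beta,P_l}$ equals $\varpi \setminus S_l$, so $P_k <_\varpi P_l$ already places $P_k$ outside all of $S_l$, boundary included. (Your claim $|B_l^{-1}\beta| > \sqrt{\eta}$ is correct, but it is the content of Lemma~\ref{pf lem0}, not the bare definition of reducedness.)

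For (1) there is a genuine gap. Neither route you sketch is completed: from $B_k^{-1}B_l = \pm\gamma_0^m$ you never extract a contradiction for $m \neq 0$ (the phrase about ``segments lying on the common geodesic line through $c_k$'' is not right --- $\overrightarrow{P_kQ_k}$ lies on $\varpi$, not on $c_k$), and the claim that ``$c_k$ determines $B_k\mathcal D$'' is unjustified, since every heptagon $\gamma_0^m\mathcal D$ has its $\mathbf e_0$-edge on $c_0$. The paper's argument is shorter and hinges on the same single-intersection fact used above: if $c_k = c_l$ then $\{P_k\} = \varpi \cap c_k = \varpi \cap c_l = \{P_l\}$, so $P_k = P_l$; Proposition~\ref{prop GCF}(2)(ii) then forces $|k-l| \leq 1$, and $c_{k+1} \neq c_k$ is immediate because $B_{k+1}\mathbf e_0 = B_k\mathbf e_{i_{k+1}}'$ with $i_{k+1} \neq 0$ is a different side of the heptagon $B_k\mathcal D$ than $B_k\mathbf e_0$. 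Your stabilizer route can be salvaged, but only by invoking the same fact that $\varpi_k \cap c_0$ is a single point and forcing it into the disjoint segments $\mathbf e_0$ and $\gamma_0^m\mathbf e_0$; at that stage the paper's direct argument is more economical.
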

\begin{proof}
(1) Suppose $c_k=c_l$ for $k \leq l$. Then we have $P_k=P_l$, and hence $l \leq k+2$ by Proposition \ref{prop GCF} (2) {\rm (ii)}. On the other hand, by the definition of the algorithm, we easily see that $c_{k+1}\neq c_k$. Therefore, we obtain $k=l$.

(2), (3) These follow from Lemma \ref{pf lem0} and Proposition \ref{prop GCF} (2) {\rm (i), (ii)}. Indeed, since $S_l$ is geodeiscally convex and $\varpi \cap c_l=\{P_l\}$, we have $\bsr{P_l, \alpha} = \varpi \cap S_l$ by Lemma \ref{pf lem0}. On the other hand we have $P_k \in \bsr{P_l, \alpha}$ for $k \geq l$, and $P_k \notin \bsr{P_l, \alpha}$ for $l \geq k+2$ by Proposition \ref{prop GCF} (2) {\rm (i)} and {\rm (ii)} respectively. This proves the lemma. 
\end{proof}

\begin{lem}\label{pf lem4}
For any fixed $l \geq 0$ we have
\begin{align}
\# \{k\geq 0 \mid c_k \cap c_l \neq \emptyset \} <\infty.
\end{align}
\end{lem}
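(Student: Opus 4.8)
The statement to prove is Lemma~\ref{pf lem4}: for any fixed $l \geq 0$, only finitely many $k \geq 0$ satisfy $c_k \cap c_l \neq \emptyset$. The plan is to reduce to the structure established in Lemmas~\ref{pf lem1}--\ref{pf lem3.5}, and to exploit that the $c_k$ that meet $c_l$ either coincide with $c_l$ (which by Lemma~\ref{pf lem3.5}(1) happens for at most one $k$, in fact only $k=l$) or lie in the translated copy $B_l \mathbb I$ of the set $\mathbb I$ of $\Gamma$-translates of $c_0$ meeting $c_0$ at one point. Applying $B_l^{-1}$ to everything, I may as well assume $l = 0$, so I must show $\#\{k \geq 0 \mid c_k \cap c_0 \neq \emptyset\} < \infty$.

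First I would dispose of the degenerate overlap: by Lemma~\ref{pf lem3.5}(1), $c_k \neq c_0$ for $k \neq 0$, so for $k \geq 1$ with $c_k \cap c_0 \neq \emptyset$ the two geodesics meet at a single point, i.e.\ $c_k \in \mathbb I$. By Lemma~\ref{pf lem2}, every element of $\mathbb I$ is of the form $\gamma_0^m c^{(i)}$ for some $m \in \Z$ and $i \in \{1,\dots,r\}$, with the finitely many $c^{(i)}$ fixed. So it suffices to bound the set of exponents $m$ that can occur. Here is where I bring in the geometric control: fix $\epsilon > 0$ small enough that the two $\epsilon$-balls around $u_0 = \sqrt{\eta}$ and $v_0 = -\sqrt{\eta}$ are disjoint and neither contains $\tau_3$ nor any point of $c_0$ lying in the relevant compact portion (concretely, choose $\epsilon < \tfrac14\min\{|u_0 - v_0|,\ \operatorname{dist}(\tau_3,\{u_0,v_0\})\}$, say). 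By Lemma~\ref{pf lem3} there is an $N>0$ such that whenever $|m| > N$, every point of $\gamma_0^m c^{(i)}$ lies within $\epsilon$ of $u_0$ or of $v_0$. I claim no such $\gamma_0^m c^{(i)}$ can equal some $c_k$ with $k \geq 1$.

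To see the claim, recall $P_k \in \varpi \cap c_k$ by Proposition~\ref{prop GCF}(2)(i), and $P_k \in S_0$ for all $k \geq 0$ by Lemma~\ref{pf lem3.5}(2) (taking $l=0$ there). Moreover $P_k \in c_k$ and $P_k = B_k\tau_3' $-type endpoint considerations, together with Lemma~\ref{pf lem0}, force $P_k$ to lie in the \emph{interior} part of $S_0$ and not near $u_0$ or $v_0$: precisely, $\alpha \in (v_0, u_0)$ and $P_k \in \bsr{P_0,\alpha} = \varpi \cap S_0$, and since $\varpi = \varpi_{\beta\to\alpha}$ is reduced its intersection with $S_0$ is bounded away from the boundary points $\pm\sqrt{\eta}$ of $c_0$; hence $|P_k - u_0| \geq \delta$ and $|P_k - v_0| \geq \delta$ for some fixed $\delta > 0$ depending only on $\varpi$. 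Choosing $\epsilon < \delta$ above, if $c_k = \gamma_0^m c^{(i)}$ with $|m| > N$ then $P_k \in c_k$ would satisfy $|P_k - u_0| < \epsilon$ or $|P_k - v_0| < \epsilon$, contradicting the lower bound. So $|m| \leq N$, leaving at most $(2N+1)r$ candidate geodesics $\gamma_0^m c^{(i)}$, hence finitely many $c_k$ (by Lemma~\ref{pf lem3.5}(1), distinct $k$ give distinct $c_k$, so this bounds $\#\{k\}$ as well); adding back $k=0$ gives the lemma.

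\textbf{Main obstacle.} The delicate point is the uniform lower bound $|P_k - u_0|, |P_k - v_0| \geq \delta$: a priori the points $P_k$ march toward $\alpha$ and could accumulate near $\alpha$, which is harmless unless $\alpha$ happens to be close to $\pm\sqrt\eta$; the reducedness hypothesis $|\alpha| < \sqrt\eta$ is strict, so $\alpha$ itself is bounded away from $u_0=\sqrt\eta$, but one must also check $\alpha$ stays away from $v_0 = -\sqrt\eta$, or argue instead with the $P_k$ directly via the nesting $\bsr{P_l,\alpha}\supset\bsr{P_k,\alpha}$ from Lemma~\ref{pf lem3.5}(2)--(3) to get a $\delta$ that works. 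I expect this endpoint book-keeping — making sure the chosen $\epsilon$ simultaneously separates $u_0,v_0$ and excludes all the $P_k$ — to be the only place requiring care; the rest is a direct assembly of Lemmas~\ref{pf lem1}--\ref{pf lem3.5}.
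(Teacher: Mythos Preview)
Your argument is correct and is essentially the paper's own proof: both reduce the intersecting $c_k$'s to elements $\gamma_0^{m}c^{(i)}$ of $\mathbb I$ via Lemmas~\ref{pf lem2} and~\ref{pf lem3.5}(1), and then combine Lemma~\ref{pf lem3} with the confinement $B_l^{-1}P_k \in B_l^{-1}\varpi \cap S_0$ from Lemma~\ref{pf lem3.5}(2); the paper simply phrases the endgame as a contradiction (forcing $B_l^{-1}\alpha \in \{u_0,v_0\}$, against Lemma~\ref{pf lem0}) rather than via your direct $\delta$-bound. Your ``obstacle'' dissolves immediately: the strict inequality $|\alpha|<\sqrt\eta$ already gives $\alpha \neq v_0=-\sqrt\eta$ just as it gives $\alpha \neq u_0=\sqrt\eta$, so the compact arc $\bss{P_0,\alpha}$ misses $\{u_0,v_0\}$ and $\delta=\operatorname{dist}(\bss{P_0,\alpha},\{u_0,v_0\})>0$ exists for free.
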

\begin{proof}
Suppose the contrary, i.e., there exists a subsequence $(k_n)_{n\geq 1}$ of $(k)_{k\in \Z_{\geq 0}}$ such that $c_{k_n}\cap c_l \neq \emptyset$ for all $n\geq 1$. We may assume $k_n >l$ for all $n \geq 1$. Then we see $\# (c_{k_n}\cap c_l )=1$ by Lemma \ref{pf lem3.5} (1). Therefore we have $B_l^{-1}B_{k_n}c_0 \in \mathbb I$ for all $n\geq 1$. 
By Lemma \ref{pf lem2}, for each $n$, the geodesic $B_l^{-1}B_{k_n}c_0$ can be written as $B_l^{-1}B_{k_n}c_0=\gamma_0^{m_n}c^{(i_n)}$ for some $m_n \in \Z$ and $i_n \in \{1, \dots, r\}$. 
Moreover, since $B_l^{-1}B_{k_n}c_0$ ($n \geq 1$) are distinct by Lemma \ref{pf lem3.5} (1), we see $|m_n| \to \infty$ as $n$ goes to $\infty$. 
%

On the other hand, we have $B_l^{-1}P_{k_n} \in B_l^{-1}B_{k_n}c_0 \cap B_l^{-1}\varpi$ for all $n\geq 1$. 
Therefore, by Lemma \ref{pf lem3} and Lemma \ref{pf lem3.5} (2), we obtain 
\begin{align}
B_l^{-1}\alpha= \lim_{n\to \infty}  B_l^{-1}P_{k_n} \in \{u_0, v_0\}.
\end{align}
However this contradicts to Lemma \ref{pf lem0}. 
\end{proof}

\begin{proof}[Proof of Theorem \ref{thm convergence}]
By Lemma \ref{pf lem4}, there exists $l_0 \geq 2$ such that for all $k \geq l_0$ we have $c_k \cap c_0 = \emptyset$. Then for $k \geq l_0 \geq 2$, we have either $S_0 \subset S_k$ or $S_k \subset S_0$. 
%
%
%
%
However, by Lemma \ref{pf lem3.5} (3) we have $P_0 \notin S_k$, and hence $S_0 \subset S_k$ can not happen, and we get $S_k \subsetneq S_0$ for all $k \geq l_0$. 
Then, inductively, we can find a strictly increasing sequence $(l_n)_{n\geq 0}$ such that we have $S_k \subsetneq S_{l_{n-1}}$ for all $k \geq l_n$. Indeed, for given $l_{n-1}$, there exists $l_n \geq l_{n-1}+2$ such that $c_k\cap c_{l_{n-1}} =\emptyset$ for all $k \geq l_n$ by Lemma \ref{pf lem4}. Then we have $S_k\subsetneq S_{l_{n-1}}$ for all $k \geq l_n \geq l_{n-1}+2$ by Lemma \ref{pf lem3.5} (3). 
In order to prove the theorem, it suffices to show
\begin{align}
\bigcap_{n\geq 0} S_{l_n}=\{\alpha\}. \label{pf claim}
\end{align}
We denote by $S_{\infty}$ the left hand side of (\ref{pf claim}). Since $S_k$ are all closed semicircle, the intersection $S_{\infty}$ is also a closed semicircle, i.e., there exists $\xi \in \R (\cap S_0)$ and $\lambda\geq 0$ such that $S_{\infty}=\{w\in \C\mid |w-\xi|\leq \lambda, \im w\geq 0\}$. 
Suppose $\lambda >0$. Then $\xi+\sqrt{-1} \lambda \in \partial S_{\infty}$ is in $\mathfrak h$. Since $\mathcal F$ is a fundamental domain, $\{\gamma \in \Gamma \mid \xi+\sqrt{-1} \lambda \in \gamma \mathcal F \}$ is a non-empty finite set. 
Thus we set $\{\delta_1, \dots, \delta_m\}:=\{\gamma \in \Gamma \mid \xi+\sqrt{-1} \lambda \in \gamma \mathcal F \}$, $V:=\bigcup_{j=1}^m\delta_j\mathcal F$, and put $U:=V^{\circ}$, where $V^{\circ}$ is the interior of $V$.
Then $U$ is an open neighborhood of $\xi+\sqrt{-1} \lambda$, and thus there exists $N >0$ such that $U$ intersects with $c_{l_n}$ for all $n\geq N$. Therefore, there exists $j \in \{1, \dots, m\}$ such that $\delta_j\mathcal F$ intersects with $c_{l_n}=B_{l_n}c_0$ for infinitely many $n \geq N$. 
Now suppose $\delta_j\mathcal F$ intersects with $c_{l_n}=B_{l_n}c_0$. Then by Lemma \ref{pf lem1} (2), there exists $k_n\in \Z$ such that $\gamma_0^{-k_n}B_{l_n}^{-1}\delta_j \mathcal F$ intersects with $\bsr{\tau_3, \gamma_0\tau_3}$. Since $\Gamma$ acts properly discontinuously on $\mathfrak h$, there exist $ n_1 > n_2 \geq N$ such that $\gamma_0^{-k_{n_1}}B_{l_{n_1}}^{-1}\delta_j\mathcal F=\gamma_0^{-k_{n_2}}B_{l_{n_2}}^{-1}\delta_j \mathcal F$. Thus $B_{n_1}=\pm B_{n_2}\gamma_0^{k_{n_2}-k_{n_1}}$, and hence $c_{n_1}=c_{n_2}$, which is a contradiction because $c_k$ are distinct by Lemma \ref{pf lem3.5} (1). Therefore $\lambda=0$ and we obtain (\ref{pf claim}) by Lemma \ref{pf lem0}.
\end{proof}

\subsection{Periodicity}\label{subsection periodicity}
Now we prove the Lagrange type periodicity theorem. 
We first prove the periodicity coming from the closed geodesics on $\Delta(2,3,7)\bs \mathfrak h$, and then prove a refined ``$\beta$-free'' version by using the convergence of the geodesic continued fraction. 
As we have remarked in Section \ref{intro}, the following Theorem \ref{lagrange closed geod} (1) follows directly from Proposition \ref{geodesic lagrange} and the well-known property of the geodesic continued fractions. However we have to discuss carefully in order to prove (2).

\begin{thm}[Lagrange's theorem for $\Delta(2,3,7)\bs \mathfrak h$]\label{lagrange closed geod}~
\begin{enumerate}[$(1)$]
\item Let $\alpha, \beta \in \R\cup \{\infty\}$ such that $\alpha\neq \beta$, and let $\varpi=\varpi_{\beta \to \alpha}$ be the oriented geodesic on $\mathfrak h$ joining $\beta$ to $\alpha$. Let 
\begin{align}
B_0^{-1}\varpi= \llbracket i_1,i_2,i_3, \cdots \rrbracket_{\Delta(2,3,7)}
\end{align}
be the geodesic continued fraction expansion of $\varpi$ with respect to $\Delta(2,3,7)$. For $k \geq 1$, set $A_k:=g_7^{i_k}g_2 \in \Delta(2,3,7)$, $B_k:=B_0A_1A_2\cdots A_k \in \Delta(2,3,7)$, and $\varpi_k:=B_k^{-1}\varpi$. 
Then the following conditions are equivalent.
\begin{enumerate}[{\rm (i)}]
\item The two endpoints $\alpha$ and $\beta$ are of the following form:
\begin{align}
\begin{cases}
\alpha &=\frac{1}{2w}(z-\bar z \pm \sqrt{D_{z,w}}) \\
\beta &= \frac{1}{2w}(z-\bar z \mp \sqrt{D_{z,w}})
\end{cases}
\end{align}
for some $z,w\in L$ such that $D_{z,w}:=(z-\bar z)^2+4\eta w\bar w > 0$. 
Here if $w=0$, we assume $(\alpha, \beta)=(0,\infty)$ or $(\infty,0)$.
\item There exists $l_0\geq 1$ such that $\varpi_{l_0}=\varpi_0$. (In particular the geodesic continued fraction expansion becomes periodic, i.e., $i_{k+l_0}=i_k$ for all $k\geq 1$.)
\end{enumerate}
\item Suppose that the above conditions are satisfied for $z,w \in L$ and $l_0 \geq 1$. 
Assume that $l_0$ is the minimal element such that the condition {\rm (ii)} holds.  
Put $\gamma_0:= B_{l_0}B_0^{-1}=B_0A_1\cdots A_{l_0}B_0^{-1}$. Then we have $\gamma_0 \in \Gamma_{\varpi} = \mathcal O_{z,w}^1$ (cf. Lemma \ref{lem ass field} (5)), and $\gamma_0$ gives the fundamental unit of $\mathcal O_{z,w}^1$. Equivalently, $\rho_{\alpha}(\gamma_0) \in F(\sqrt{D_{z,w}})$ gives the fundamental unit of $U_{\mathcal O,z,w/F}$.
\end{enumerate}
\end{thm}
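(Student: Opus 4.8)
The plan is to treat the two parts separately. For part (1), the equivalence (i) $\Leftrightarrow$ (ii) will be proved by combining Proposition \ref{geodesic lagrange} with the standard dynamical fact that a reduced geodesic has an eventually periodic geodesic continued fraction expansion exactly when its projection is a closed geodesic. Concretely, assuming (i), Proposition \ref{geodesic lagrange} gives a hyperbolic element in $\Gamma_{\varpi_{\beta\to\alpha}} = \mathcal O_{z,w}^1$, so $\overline{\varpi}$ is closed; then one argues that the sequence of translates $B_k\mathcal D$ is confined to a compact neighborhood of the closed geodesic, so by properly discontinuity only finitely many distinct $B_k\mathcal D$ occur, forcing $\varpi_{k+l_0}=\varpi_k$ for some $l_0$ and some $k$; finally, because the map $\varpi_k\mapsto\varpi_{k+1}$ is deterministic and also invertible (Lemma \ref{lem trivial} lets us recover $\varpi_k$ from $\varpi_{k+1}$), periodicity can be pushed back to start at $k=0$, i.e.\ $\varpi_{l_0}=\varpi_0$. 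Conversely, if $\varpi_{l_0}=\varpi_0=B_0^{-1}\varpi$, then $\gamma_0:=B_{l_0}B_0^{-1}\in\Delta(2,3,7)$ fixes $\varpi$ setwise, and $\gamma_0\neq\pm1$ since the $B_k\mathcal D$ are pairwise distinct by Proposition \ref{prop GCF} (2)(iii); hence $\gamma_0$ is hyperbolic by Lemma \ref{lem hyp}, so $\alpha,\beta$ have the form in (i) by Proposition \ref{geodesic lagrange}.

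For part (2), first observe that $\gamma_0=B_{l_0}B_0^{-1}$ satisfies $\gamma_0\varpi=\varpi$ directly from $\varpi_{l_0}=\varpi_0$, and $\gamma_0$ is hyperbolic by the argument just given, so $\gamma_0\in\Gamma_{\varpi}=\mathcal O_{z,w}^1$ by Lemma \ref{lem ass field} (5). Since $\mathcal O_{z,w}^1=\{\pm\varepsilon_0^k\mid k\in\Z\}$ by Lemma \ref{lem ass field} (6), we have $\gamma_0=\pm\varepsilon_0^{\,n}$ for some $n\in\Z$, and by replacing $\varepsilon_0$ with $\varepsilon_0^{-1}$ if necessary we may assume $n\geq1$; the claim is $n=1$. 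The strategy is the contrapositive: suppose $\gamma_0=\pm\varepsilon_1^{\,n}$ with $\varepsilon_1:=\varepsilon_0^{\pm1}$ a fundamental unit, $n\geq2$. I would like to produce a strictly smaller period, i.e.\ show $\varpi_{l_1}=\varpi_0$ for some $0<l_1<l_0$. The natural candidate is to set $\gamma_1:=B_0\varepsilon_1 B_0^{-1}$ (after identifying $\varepsilon_1$ with its image in $\Gamma$) — wait, more carefully, $\varepsilon_1\in\mathcal O^1_{z,w}\subset\Delta(2,3,7)$ already, and we want the period corresponding to $\varepsilon_1$. The point is that $\varepsilon_1$, being hyperbolic and fixing $\varpi$, translates $\varpi$ along itself by a smaller amount than $\gamma_0$, so among the points $P_k$ on $\varpi$ (with $\varpi\cap B_k\mathcal D=\overrightarrow{P_kQ_k}$) there should be some $l_1<l_0$ with $P_{l_1}=\varepsilon_1 P_0$ and $B_{l_1}=\pm B_0\varepsilon_1$; then $\varpi_{l_1}=B_{l_1}^{-1}\varpi=B_0^{-1}\varepsilon_1^{-1}\varpi=B_0^{-1}\varpi=\varpi_0$, contradicting minimality of $l_0$. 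Once $n=1$ is established, $\gamma_0$ is a fundamental unit of $\mathcal O_{z,w}^1$, and the equivalent statement about $\rho_\alpha(\gamma_0)$ being fundamental in $U_{\mathcal O,z,w/F}$ is immediate because $\rho_\alpha$ is an isomorphism of groups carrying $\mathcal O_{z,w}^1$ onto $U_{\mathcal O,z,w/F}$.

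The key technical point — and the place where the ``take care about the vertices of the fundamental domain'' remark in the introduction bites — is precisely the claim that $B_{l_1}=\pm B_0\varepsilon_1$ for the appropriate index $l_1$, i.e.\ that the fundamental unit $\varepsilon_1$ really shows up among the matrices $B_k$ produced by the algorithm, and not merely one of its powers. The subtlety is that $\varepsilon_1 P_0$ need not be one of the vertices $P_k$ if $\varepsilon_1$ maps $\mathbf e_0$ (the uppermost edge of $\mathcal D$) into the interior of $\mathcal D$ rather than onto another translate of $\mathbf e_0$; this is why the reducedness of $\varpi$ and of $\varepsilon_1^{-1}\varpi=\varpi$ together with Lemma \ref{lem trivial} (which controls exactly how a reduced geodesic exits $\mathcal D$ through the edges, including the degenerate cases hitting $\tau_3$) must be invoked to pin down that the sequence $(B_k\mathcal D)_k$ under the hyperbolic $\varepsilon_1$ is ``one full fundamental period'' of translates of $\mathcal D$ along $\varpi$. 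I expect this bookkeeping — establishing that the $\Delta(2,3,7)$-orbit of $\mathcal D$ along the axis of $\varepsilon_1$ is enumerated by the algorithm exactly once per application of $\varepsilon_1$, with no shorter subperiod hidden at a vertex — to be the main obstacle. The rest (the $\Leftarrow$ direction of (1), the hyperbolicity of $\gamma_0$, and the transfer along $\rho_\alpha$) is routine given Proposition \ref{geodesic lagrange}, Lemma \ref{lem ass field}, and Proposition \ref{prop GCF}.
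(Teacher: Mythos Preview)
Your direction (ii)$\Rightarrow$(i) matches the paper exactly. For (i)$\Rightarrow$(ii) the paper takes a different and sharper route than your compactness/injectivity sketch: after reducing via Lemma~\ref{lem reduced}\,(\ref{reducibility2}) to the case where $\varpi$ is reduced and $\varpi\cap\mathcal D^\circ\neq\emptyset$, it picks $R\in\varpi\cap\mathcal D^\circ$, takes \emph{any} hyperbolic $\gamma_0\in\Gamma_\varpi$ with $\alpha$ attracting, and uses the convergence theorem (Theorem~\ref{thm convergence}) to locate an $l_0$ with $\gamma_0 R\in\bsr{P_{l_0},Q_{l_0}}$; then Lemma~\ref{lem reduced}\,(\ref{torsion}) forces $B_{l_0}^{-1}\gamma_0=\pm1$. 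This delivers more than bare periodicity: it shows that every hyperbolic element of $\Gamma_\varpi$ with $\alpha$ attracting is of the form $\pm B_l$ for some $l\geq 1$.

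That extra strength is precisely what dissolves your acknowledged ``key technical point'' in part~(2). Your contrapositive (if $B_{l_0}B_0^{-1}=\pm\varepsilon_1^n$ with $n\geq 2$, exhibit a shorter period from $\varepsilon_1$) is logically equivalent to what the paper does, but the paper runs it forward: applying the (i)$\Rightarrow$(ii) argument to the fundamental unit yields $\pm B_{l_0'}$ equal to that unit for some $l_0'\geq 1$; periodicity and minimality of $l_0$ force $l_0'=ml_0$ and hence the unit equals $B_{l_0}^m$; since it generates $\Gamma_\varpi$ modulo $\pm1$, $m=1$. The vertex issues you anticipated are handled upstream by the normalization $\varpi\cap\mathcal D^\circ\neq\emptyset$ together with Lemma~\ref{lem reduced}\,(\ref{torsion}), which is exactly the rigidity statement (``$\gamma=\pm1$'') needed to match $\gamma_0$ with a specific $B_l$.

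Your compactness argument for (i)$\Rightarrow$(ii) is repairable but contains a slip as written: the $B_k\mathcal D$ are pairwise distinct by Proposition~\ref{prop GCF}\,(2)(iii), and they are not confined to any compact subset of $\mathfrak h$ (indeed $P_k\to\alpha\in\partial\mathfrak h$). What is finite is the set of reduced geodesics $\varpi_k$, since each records a crossing of the compact closed geodesic with the image of $\mathbf e_0$ in $\Delta(2,3,7)\backslash\mathfrak h$. Even after this fix, your route does not directly yield ``$\varepsilon_1=\pm B_{l_1}$ for some $l_1$'', so for part~(2) you would still end up running the paper's argument.
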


\begin{proof}
(1) The implication (ii) $\Rightarrow$ (i) is clear from the implication (ii) $\Rightarrow$ (iii) of Proposition \ref{geodesic lagrange}. Indeed {\rm (ii)} implies $B_{l_0}B_0^{-1}\varpi=\varpi$, and we have $B_0 \mathcal D \neq B_{l_0}\mathcal D$ by Proposition \ref{prop GCF} (2) {\rm (iii)}. Therefore $B_{l_0}B_0^{-1} \neq \pm 1$ is a hyperbolic element in $\Gamma_{\varpi}$. 
We prove the implication (i) $\Rightarrow$ (ii). 
Since the conditions {\rm (i)} and {\rm (ii)} are preserved by replacing $\varpi$ with $\gamma \varpi$ for $\gamma \in \Delta(2,3,7)$ by Proposition \ref{prop GCF} (1), we may assume $\varpi$ is reduced (i.e., $B_0=1$) and $\varpi \cap \mathcal D^{\circ} \neq \emptyset$ by Lemma \ref{lem reduced} (\ref{reducibility2}). 
We take $R \in \varpi \cap \mathcal D^{\circ}$, and define $P_k, Q_k \in \varpi$ ($k\geq 0$) so that $\varpi \cap \mathcal D=\overrightarrow{P_kQ_k}$. 
By Proposition \ref{geodesic lagrange}, there exists a hyperbolic element in $\Gamma_{\varpi}$. Let  $\gamma_0 \in \Gamma_{\varpi}$ be any hyperbolic element. By replacing $\gamma_0$ by $\gamma_0^{-1}$ if necessary we assume $\alpha$ is the attracting point of $\gamma_0$. Then we have 
$R \leq_{\varpi} \gamma_0 R$. Furthermore, we have 
\begin{align}
P_0 <_{\varpi} R <_{\varpi} Q_0=P_1 <_{\varpi}  \gamma_0R. 
\end{align}
Indeed, if $\gamma_0 R \in \mathcal D$, then by Lemma \ref{lem reduced} (\ref{torsion}), we obtain $\gamma_0=\pm 1$, which is a contradiction. 
On the other hand, by Proposition \ref{prop GCF} and Theorem \ref{thm convergence}, we have $P_{k+1}=Q_k$ and $\lim_{k\to \infty}P_k=\lim_{k\to \infty}Q_k=\alpha$. 
Therefore,  there exists $l_0\geq 1$ such that $\gamma_0 R\in \bsr{P_{l_0},Q_{l_0}}$. 
Then we have $R \in \varpi\cap\mathcal D^{\circ}$ and $B_{l_0}^{-1}\gamma_0 R \in B_{l_0}^{-1}\gamma_0\varpi \cap \mathcal D$ and $\varpi$ and $B_{l_0}^{-1}\gamma_0\varpi=B_{l_0}^{-1}\varpi$ are both reduced. 
Therefore by Lemma \ref{lem reduced} (\ref{torsion}), we obtain $B_{l_0}^{-1}\gamma_0=\pm 1$, and hence $\varpi_{l_0}=\gamma_0 \varpi=\varpi_0$.

(2) We may assume $\varpi$ is reduced and $B_0=1$. 
Suppose $l_0$ is minimal. 
By the above argument, we see $B_{l_0} \in \Gamma_{\varpi} = \mathcal O_{z,w}^1$. On the other hand, let $\gamma_0$ be the hyperbolic element which generates of $\Gamma_{\varpi}$ up to $\pm 1$, and assume that $\alpha$ is the attracting point of $\gamma_0$. Then, again by the above argument, we obtain $\gamma_0=\pm B_{l_0'}$ for some $l_0' \geq 1$. Now by the periodicity (ii) and the minimality of $l_0$, we see $\gamma_0=B_{l_0'}=B_{m l_0}=B_{l_0}^m$ for some $m \geq 1$. Then, since $\gamma_0$ generates $\Gamma_{\varpi}$, we obtain $m=1$. Therefore, we get $l_0'=l_0$, and hence $B_{l_0}=\pm \gamma_0$ becomes the fundamental unit. This completes the proof.
\end{proof}

\paragraph{The $\beta$-free version}

In order to discuss the refined version of the above theorem, we first prepare some lemmas. 
For $\alpha \in \R \cup \{\infty\}$, we denote by $\mathcal G_{\alpha}$ the set of oriented geodesics on $\mathfrak h$ which goes towards $\alpha$. We naturally identify $\mathcal G_{\alpha}$ with $\P^1(\R)\!-\!\{\alpha\}=\R \cup\{\infty\}\!-\!\{\alpha\}$ as follows: 
\begin{align}
\P^1(\R)\!-\!\{\alpha\} \overset{\sim}{\longrightarrow} \mathcal G_{\alpha}; \beta \mapsto \varpi_{\beta \to \alpha}.
\end{align}
We equip $\mathcal G_{\alpha}$ with the natural topology of $\P^1(\R)$ via this identification. 
Then for $w\in \mathfrak h$, we denote by $p_{\alpha}(w) \in \mathcal G_{\alpha}$ the unique oriented geodesic on $\mathfrak h$ which passes through $w$ and goes to $\alpha$. This defines a map
\begin{align}
p_{\alpha}: \mathfrak h \rightarrow \mathcal G_{\alpha}
\end{align}
which is clearly continuous open map since $p_{\alpha}$ is a fiber bundle with fibers $\varpi_{\beta \to \alpha}$.

Now let $\varpi=\varpi_{\beta \to \alpha}$ be an oriented geodesic which satisfies the equivalent conditions of Proposition \ref{geodesic lagrange} and Theorem \ref{lagrange closed geod}. 
Furthermore we assume that $\varpi$ is {reduced} and {$\varpi \cap \mathcal D^{\circ} \neq \emptyset$}. 
Take $R \in \varpi \cap \mathcal D^{\circ}$ and a hyperbolic element $\gamma_0 \in \Gamma_{\varpi}$ which generates $\Gamma_{\varpi}$ up to $\pm 1$. We assume that $\alpha$ is the attracting point and $\beta$ is the repelling point of $\gamma_0$. Thus we have
\begin{align}
\varpi= \coprod_{n\in \Z}\gamma_0^n\bsr{R, \gamma_0R}.
\end{align}

We regard $\varpi$ as an element in $\mathcal G_{\alpha}$. Then $\mathcal G_{\alpha}\!-\!\{\varpi\}$ has two connected components. We denote by 
\begin{align}
\mathcal G_{\alpha,\beta,+} &:= \{\varpi_{\beta' \to \alpha} \mid \beta' \in (\beta, \alpha) \},    \\
\mathcal G_{\alpha,\beta,-} &:= \{\varpi_{\beta' \to \alpha} \mid \beta' \in (\alpha, \beta) \},
\end{align}
those two components. Here we use the notation (\ref{extended interval}). 
Let $S_0=\{w \in \C \mid |w| \leq \sqrt{\eta},\  \im z\geq 0\}$ be as before.

\begin{lem}\label{refine lem0}
Let $U \subset \mathcal G_{\alpha}$ be any connected open neighborhood of $\varpi$, then we have $\gamma_0^{-1}U \subset U$. In particular for all $\varpi' \in U\!-\!\{\varpi\}$, we have $\gamma_0^{-1}\varpi' \in  U\!-\!\{\varpi\}$.
\end{lem}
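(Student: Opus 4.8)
*Let $U \subset \mathcal G_{\alpha}$ be any connected open neighborhood of $\varpi$, then we have $\gamma_0^{-1}U \subset U$. In particular for all $\varpi' \in U\!-\!\{\varpi\}$, we have $\gamma_0^{-1}\varpi' \in U\!-\!\{\varpi\}$.*

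Let me sketch a proof.

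We identify $\mathcal G_\alpha$ with $\mathbb P^1(\mathbb R) \setminus \{\alpha\}$ via $\beta' \mapsto \varpi_{\beta' \to \alpha}$. Under this identification the action of $\gamma_0$ on $\mathcal G_\alpha$ is the restriction of the linear fractional action of $\gamma_0$ on $\mathbb P^1(\mathbb R)$, which fixes $\alpha$. So $\gamma_0$ acts on $\mathcal G_\alpha = \mathbb P^1(\mathbb R)\setminus\{\alpha\}$ as a homeomorphism fixing the point $\varpi = \varpi_{\beta\to\alpha}$ (corresponding to $\beta$). Since $\gamma_0$ is hyperbolic with attracting point $\alpha$ and repelling point $\beta$, the point $\beta \in \mathbb P^1(\mathbb R)\setminus\{\alpha\}$ is the repelling fixed point of $\gamma_0$ acting on this one-dimensional space, hence $\beta$ is an attracting fixed point of $\gamma_0^{-1}$; moreover $\gamma_0^{-1}$ has no other fixed point in $\mathbb P^1(\mathbb R)\setminus\{\alpha\}$. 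Consequently $\gamma_0^{-1}$ acts on $\mathcal G_\alpha \setminus \{\varpi\}$, which has the two connected components $\mathcal G_{\alpha,\beta,+}$ and $\mathcal G_{\alpha,\beta,-}$, and on each component every orbit converges monotonically to $\varpi$; in particular $\gamma_0^{-1}$ preserves each of the two components (since it fixes $\varpi$ and is orientation-preserving on $\mathbb P^1(\mathbb R)$, or directly: it cannot swap them because then $\gamma_0^{-2}$ would fix each component with an interior fixed point, impossible).

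Now let $U$ be a connected open neighborhood of $\varpi$ in $\mathcal G_\alpha$. Write $U_\pm := U \cap \mathcal G_{\alpha,\beta,\pm}$. Since $U$ is connected and $\mathcal G_\alpha$ is (locally) a one-dimensional manifold (an interval, in the identification removing $\alpha$ from the circle $\mathbb P^1(\mathbb R)$ and then again: $\mathcal G_\alpha \cong \mathbb R$, with $\varpi$ an interior point), $U$ is an open interval containing $\varpi$, so $U_+$ and $U_-$ are each a half-open interval with endpoint $\varpi$ on one side. It suffices to show $\gamma_0^{-1}U_+ \subset U_+$ and likewise for $U_-$. Fix the component $\mathcal G_{\alpha,\beta,+} = \{\varpi_{\beta'\to\alpha} \mid \beta' \in (\beta,\alpha)\}$, parametrized by $\beta' \in (\beta,\alpha)$ (an open interval in $\mathbb P^1(\mathbb R)$ in the sense of (\ref{extended interval})). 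On this interval $\gamma_0^{-1}$ is a continuous monotone self-map with the single fixed point $\beta$ at the boundary, and $\beta$ is attracting, so $\gamma_0^{-1}\beta' $ lies strictly between $\beta$ and $\beta'$ for every $\beta' \in (\beta,\alpha)$. Hence if $\beta' \in (\beta, \beta'_0)$ where $\varpi_{\beta'_0 \to \alpha}$ is the endpoint of $U_+$, then $\gamma_0^{-1}\beta' \in (\beta, \beta') \subset (\beta, \beta'_0)$, i.e. $\gamma_0^{-1}\varpi_{\beta'\to\alpha} \in U_+$. The same argument with the roles reversed gives $\gamma_0^{-1}U_- \subset U_-$. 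Together with $\gamma_0^{-1}\varpi = \varpi$ this yields $\gamma_0^{-1}U \subset U$, and the final assertion is immediate since $\gamma_0^{-1}$ fixes $\varpi$, so it maps $U\setminus\{\varpi\}$ into itself.

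The essentially only thing to be careful about is the elementary dynamics of a hyperbolic element acting on the boundary circle: that on the complement of the attracting fixed point $\alpha$, the other fixed point $\beta$ becomes a globally attracting fixed point for $\gamma_0^{-1}$, so that iteration pushes every point toward $\beta$ without overshooting. This is a standard one-dimensional fact (conjugate $\gamma_0$ to a diagonal matrix $\mathrm{diag}(\lambda,\lambda^{-1})$ with $\lambda > 1$, sending $\alpha \mapsto \infty$, $\beta \mapsto 0$; then $\gamma_0$ acts as $t \mapsto \lambda^2 t$ on $\mathbb R = \mathbb P^1(\mathbb R)\setminus\{\infty\}$, and $\gamma_0^{-1}$ as $t \mapsto \lambda^{-2}t$, which clearly preserves each of $(0,\infty)$ and $(-\infty,0)$ and contracts toward $0$), and all of the above statements are transported back by this conjugacy.
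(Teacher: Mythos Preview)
Your proof is correct and follows the same approach as the paper. The paper's own proof is a single sentence (``This follows from the assumption that $\gamma_0$ is a hyperbolic element with attracting fixed point $\alpha$ and repelling fixed point $\beta$''), and your argument is precisely a careful unpacking of that sentence: you make explicit the identification $\mathcal G_\alpha \cong \mathbb P^1(\mathbb R)\setminus\{\alpha\}$, the fact that a connected open neighborhood of $\varpi$ is an interval, and the elementary contraction dynamics of $\gamma_0^{-1}$ toward $\beta$ (via conjugation to $t\mapsto \lambda^{-2}t$).
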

\begin{proof}
This follows from the assumption that $\gamma_0$ is a hyperbolic element with attracting fixed point $\alpha$ and repelling fixed point $\beta$.
\end{proof}

\begin{lem}\label{refine lem1}
There exists a connected open neighborhood $U \subset \mathcal G_{\alpha}$ of $\varpi$ such that for any $\varpi' \in U\!-\!\{\varpi\}$ we have $\varpi' \cap \mathcal D^{\circ} \neq \emptyset$ and 
\begin{align}
\varpi' \cap S_0 \cap \Delta(2,3,7) \tau_3 = \emptyset,
\end{align}
i.e., for every $\varpi' \in U\!-\!\{\varpi\}$, $\varpi'$ intersects with $\mathcal D^{\circ}$ and does not contain $\Delta(2,3,7)$-translations of $\tau_3$ inside $S_0$.
\end{lem}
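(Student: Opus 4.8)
The plan is to produce the neighborhood $U$ by intersecting two open neighborhoods of $\varpi$ in $\mathcal G_{\alpha}$: one that guarantees $\varpi' \cap \mathcal D^{\circ} \neq \emptyset$, and one that guarantees $\varpi'$ avoids the set $\Delta(2,3,7)\tau_3$ inside $S_0$. For the first, I would use that $\varpi \cap \mathcal D^{\circ} \neq \emptyset$ by hypothesis: pick $R \in \varpi \cap \mathcal D^{\circ}$, and since $\mathcal D^{\circ}$ is open in $\mathfrak h$ and $p_{\alpha}$ is a continuous open map, the image $p_{\alpha}(\mathcal D^{\circ})$ is an open neighborhood of $\varpi = p_{\alpha}(R)$ in $\mathcal G_{\alpha}$; every $\varpi'$ in this image passes through some point of $\mathcal D^{\circ}$, so $\varpi' \cap \mathcal D^{\circ} \neq \emptyset$. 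Call this open set $U_1$.

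For the second condition, the key point is that inside the compact region $S_0 \subset \overline{\mathfrak h}$ there are only finitely many $\Delta(2,3,7)$-translates of $\tau_3$: since $\Delta(2,3,7)$ acts properly discontinuously on $\mathfrak h$ and $\tau_3 \in \mathfrak h$, the orbit $\Delta(2,3,7)\tau_3$ is discrete in $\mathfrak h$, so $\Delta(2,3,7)\tau_3 \cap S_0$ is a finite set $\{P_1,\dots,P_s\}$ (note $\tau_3$ itself lies on $c_0 = \partial S_0 \cap \mathfrak h$, and the other relevant translates lie in $S_0$). Crucially, $\varpi$ does \emph{not} contain any of these points: indeed $\varpi$ satisfies the conditions of Theorem \ref{lagrange closed geod}, so its endpoints $\alpha,\beta$ are quadratic over $F=\Q(\eta)$, whereas if $\tau_3' \in \varpi$ for some $\tau_3' = g\tau_3$ ($g \in \Delta(2,3,7)$), then $\varpi = g\varpi_3'$ for a geodesic through $\tau_3$; one checks (using the explicit description of the geodesics through $\tau_3$ and the action of $g_2,g_3,g_7$) that such a geodesic is in the $\Delta(2,3,7)$-orbit of $c_0$ or has a vertex-type intersection pattern with $\mathcal D$, and in particular the stabilizer-type argument shows $\varpi$ closed-on-the-Shimura-curve with endpoints conjugate to $\pm\sqrt{\eta}$ — but then $\varpi$ would not have $\varpi\cap\mathcal D^\circ\neq\emptyset$ while passing through $\tau_3$, contradicting the reducedness hypotheses. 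Hence for each $i$, $P_i \notin \varpi$, so $\varpi = \varpi_{\beta\to\alpha}$ with $\beta \neq$ (the second endpoint of $p_{\alpha}(P_i)$); equivalently $\varpi \neq p_{\alpha}(P_i)$ in $\mathcal G_{\alpha}$ whenever $\alpha \notin \{P_i\text{'s limit}\}$, and in all cases $\varpi$ is not the geodesic $p_\alpha(P_i)$. Therefore $U_2 := \mathcal G_{\alpha} \setminus \{\, p_{\alpha}(P_1),\dots,p_{\alpha}(P_s)\,\}$ is an open neighborhood of $\varpi$ (a finite set of points removed from $\mathcal G_{\alpha} \cong \P^1(\R)$), and any $\varpi' \in U_2$ passes through none of $P_1,\dots,P_s$, i.e. $\varpi' \cap S_0 \cap \Delta(2,3,7)\tau_3 = \emptyset$.

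Finally I would take $U$ to be the connected component of $U_1 \cap U_2$ containing $\varpi$; this is a connected open neighborhood of $\varpi$, and every $\varpi' \in U \setminus \{\varpi\} \subset (U_1 \cap U_2) \setminus \{\varpi\}$ satisfies both required properties. The main obstacle, and the step deserving the most care, is the claim that $\varpi$ itself contains no point of $\Delta(2,3,7)\tau_3 \cap S_0$: this is where one must use the hypotheses that $\varpi$ is reduced with $\varpi \cap \mathcal D^{\circ}\neq\emptyset$ together with Lemma \ref{lem trivial} (classifying how geodesics through the vertex $\tau_3$ meet $\mathcal D$) — a geodesic through a $\Delta(2,3,7)$-translate of $\tau_3$ lying in $S_0$ is forced into one of the exceptional vertex configurations of Lemma \ref{lem trivial}, which is incompatible with $\varpi$ meeting $\mathcal D^{\circ}$ and being reduced in the sense required here. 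Everything else is a routine combination of proper discontinuity (finiteness of the orbit in $S_0$), continuity/openness of $p_{\alpha}$, and the observation that removing finitely many points from $\mathcal G_\alpha \cong \P^1(\R)$ leaves an open set.
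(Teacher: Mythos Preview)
Your approach has a genuine gap: the claim that $S_0 \cap \Delta(2,3,7)\tau_3$ is finite is false. The set $S_0$ is compact as a subset of $\overline{\mathfrak h}$, but $S_0 \cap \mathfrak h$ is \emph{not} compact in $\mathfrak h$ (with the hyperbolic metric): it contains points arbitrarily close to the real interval $[-\sqrt{\eta},\sqrt{\eta}]$. Since $\Delta(2,3,7)$ is cocompact, its limit set is all of $\partial\mathfrak h$, so the orbit $\Delta(2,3,7)\tau_3$ accumulates at every real point, and in particular there are infinitely many orbit points inside $S_0$. Removing the corresponding geodesics $p_{\alpha}(P_i)$ from $\mathcal G_{\alpha}$ therefore removes a countably infinite set, and there is no reason for what remains to be open near $\varpi$; indeed these geodesics may well accumulate at $\varpi$.

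This is exactly why the paper's argument is more elaborate. Rather than working with $S_0$ directly, the paper first chooses a geodesically convex open set $V \subset \mathfrak h$ with compact closure containing $\bss{R,\gamma_0 R}\cup\mathcal D\cup\gamma_0\mathcal D$; proper discontinuity then gives only finitely many orbit points in $V$, and one removes the finitely many corresponding geodesics (adding $\{\varpi\}$ back in, so no separate argument that $\varpi$ avoids them is needed). The crucial extra step is to show that for $\varpi'\in U\!-\!\{\varpi\}$ one has $\varpi'\cap S_0 \subset \bigcup_{n\ge 0}\gamma_0^n V$: this uses Lemma~\ref{refine lem0} ($\gamma_0^{-1}U\subset U$), the fact that $\varpi'$ meets both $V_0$ and $\gamma_0 V_0$, and convexity of $V$, together with a point $Q\in\varpi\cap(V\!\setminus\! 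S_0)$ to control where $\varpi'\cap S_0$ begins. Then one transports the finiteness in $V$ along the $\gamma_0$-orbit. In short, the hyperbolic element $\gamma_0\in\Gamma_{\varpi}$ (available precisely because $\varpi$ satisfies the hypotheses of Proposition~\ref{geodesic lagrange}) is the missing ingredient that lets you reduce the non-compact region $S_0$ to the compact region $V$.
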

\begin{proof}
Since $\mathcal D$, $\gamma_0 \mathcal D$, and $\bss{R,\gamma_0R}$ are all compact, there exists a geodesically convex open set $V \subset \mathfrak h$ containing $\bss{R,\gamma_0R}\cup \mathcal D \cup \gamma_0 \mathcal D$ such that its closure $\overline V$ (in $\mathfrak h$) is compact. Then, since the action of $\Delta(2,3,7)$ on $\mathfrak h$ is properly discontinuous, we see $V \cap \Delta(2,3,7)\tau_3$ is a finite set. Thus we take $w_1,\dots , w_r \in V$ such that
\begin{align}
\{w_1,\dots , w_r\}= V \cap \Delta(2,3,7)\tau_3.
\end{align}
Then $p_{\alpha}(V) \subset \mathcal G_{\alpha}$ becomes an open neighborhood of $\varpi$, and hence
\begin{align}
U_1=p_{\alpha}(V)\!-\!\{p_{\alpha}(w_1), \dots, p_{\alpha}(w_r)\} \cup \{\varpi\}
\end{align}
is also an open neighborhood of $\varpi$. 
Now since $\varpi$ enters $\mathcal D$ from $\mathbf e_0$ and $V$ is an open set containing $\mathcal D$, there exists $Q \in \varpi $ such that $Q \in V\!-\!S_0$. Thus we take an open neighborhood $W \subset V\!-\!S_0$ of $Q$ in $V\!-\!S_0$. 
Let $V_0 \subset \mathcal D$ be an open neighborhood of $R$ in $\mathcal D$. 
We define an open subset $U$ of $\mathcal G_{\alpha}$ to be the connected component of 
\begin{align}
U_1\cap p_{\alpha}(V_0) \cap p_{\alpha}(\gamma_0 V_0) \cap p_{\alpha}(W) \ni \varpi
\end{align}
containing $\varpi$. At this stage we know the following: for any $\varpi' \in U\!-\!\{\varpi\}$, we have 
\begin{align}\label{refine eqn V}
\varpi' \cap V \cap \Delta(2,3,7) \tau_3 =\emptyset.
\end{align}
Therefore it remains to extend the $\Delta(2,3,7)\tau_3$-free region from $V$ to $S_0$. We prove the following:
\begin{claim}\label{refine claim1}
Let $\varpi' \in U\!-\!\{\varpi\}$, then we have 
\begin{align}
\varpi' \cap S_0 \subset \bigcup_{n \geq 0}\gamma_0^n V.
\end{align}
\end{claim}
Now, by Lemma \ref{refine lem0}, we have $\gamma_0^{-n} \varpi' \in U\!-\!\{\varpi\}$ for all $n \geq 0$, and hence $\gamma_0^{-n}\varpi' \in p_{\alpha}(V_0) \cap p_{\alpha}(\gamma_0V_0)$ for all $n \geq 0$. Therefore we see $\gamma_0^{-n}\varpi'$ intersects with $V_0$ and $\gamma_0V_0$ for all $n\geq 0$, or equivalently, $\varpi'$ intersects with $\gamma_0^nV_0$ and $\gamma_0^{n+1}V_0$ for all $n\geq 0$.
On the other hand, by the definition, $\gamma_0^nV$ is a geodesically convex open set which contains $\gamma_0^nV_0$ and $\gamma_0^{n+1}V_0$. Therefore the geodesic segment of $\varpi'$ between $\varpi' \cap \gamma_0^nV_0$ and $\varpi' \cap \gamma_0^{n+1}V_0$ is contained in $\gamma_0^nV$. 
Now, since $\alpha$ is the attracting point of $\gamma_0$, the points in $\gamma_0^nV$ converges to $\alpha$ uniformly as $n\to \infty$. 
Therefore the geodesic segment of $\varpi'$ from $\varpi' \cap V$ to $\alpha$ is contained in $\bigcup_{n \geq 0}\gamma_0^n V$.
On the other hand since $W \subset V\!-\!S_0$ and $\varpi' \in p_{\alpha}(W)$, we see that $\varpi' \cap S_0$ is contained in the geodesic segment of $\varpi'$ from $\varpi' \cap V$ to $\alpha$. This proves the claim.

Now let $\varpi' \in U\!-\!\{\varpi\}$ and let $P \in \varpi'\cap S_0$. Then by Claim \ref{refine claim1}, there exists $n \geq 0$ such that $\gamma_0^{-n}P \in \gamma_0^{-n}\varpi' \cap V$. 
On the other hand, we have $\gamma_0^{-n} \varpi' \in U\!-\!\{\varpi\}$ by Lemma \ref{refine lem0}. Thus we get $\gamma_0^{-n} \varpi' \cap V \cap \Delta(2,3,7)\tau_3 =\emptyset$ by (\ref{refine eqn V}). Therefore $P \notin \Delta(2,3,7)\tau_3$. This proves the lemma.
\end{proof}

Let $U \subset \mathcal G_{\alpha}$ be a connected open neighborhood of $\varpi$ satisfying the condition in Lemma \ref{refine lem1}. Then $U \!-\!\{\varpi\}$ consists of two connected components 
\begin{align}
U_+ &=U\cap \mathcal G_{\alpha,\beta, +} \label{U+} \\
U_- &=U\cap \mathcal G_{\alpha,\beta, -}. \label{U-}
\end{align}

\begin{lem}\label{refine lem2}
Let $\varpi^{(1)}, \varpi^{(2)} \in U_+$ (or $\varpi^{(1)}, \varpi^{(2)} \in U_-$) be geodesics in the same connected component of $U \!-\!\{\varpi\}$. By the assumption, we can take $R_1 \in \varpi^{(1)}\cap \mathcal D^{\circ}$ and $R_2 \in \varpi^{(2)}\cap \mathcal D^{\circ}$. Let 
\begin{align}
\varpi^{(1)} = \llbracket B_{1,0}; i_1, i_2, \dots \rrbracket_{\Delta(2,3,7)}, \\
\varpi^{(2)} = \llbracket B_{2,0}; j_1, j_2, \dots \rrbracket_{\Delta(2,3,7)}
\end{align}
be the geodesic continued fraction expansions of $\varpi^{(1)}$ and $\varpi^{(2)}$ such that $B_{1,0}^{-1}R_1 \in \mathcal D^{\circ}$ and $B_{2,0}^{-1}R_2 \in \mathcal D^{\circ}$. Then we have $B_{1,0}=\pm B_{2,0}$ and $i_k=j_k$ for all $k \geq 1$. 
\end{lem}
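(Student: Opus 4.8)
The plan is to show that two geodesics in the same connected component $U_+$ (or $U_-$) of $U\!-\!\{\varpi\}$, both reduced with a distinguished point in $\mathcal D^{\circ}$, have literally the same geodesic continued fraction data. The key observation is that, since $U$ avoids $\Delta(2,3,7)$-translations of $\tau_3$ inside $S_0$ (Lemma \ref{refine lem1}), the geodesic continued fraction expansions of geodesics in $U\!-\!\{\varpi\}$ never touch the ``degenerate'' case (4) of Lemma \ref{lem trivial}, so the algorithm is literally locally constant along $U_+$ (resp. $U_-$). I will make this precise by a connectedness argument: I partition $U_+$ into the sets on which the first $k$ terms of the expansion are prescribed, and show each such set is open.

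First I would normalize by the action of $\Delta(2,3,7)$. Replacing $\varpi^{(1)}, \varpi^{(2)}$ by $B_{1,0}^{-1}\varpi^{(1)}$, $B_{2,0}^{-1}\varpi^{(2)}$ amounts (by Proposition \ref{prop GCF} (1)) to assuming $B_{1,0}=B_{2,0}=1$, i.e., both are reduced with $R_i \in \mathcal D^{\circ}$; I have to keep track that $U_+$ itself is translated, but since the statement is symmetric in $\varpi^{(1)},\varpi^{(2)}$ this is harmless once I argue along a path in $U_+$ connecting them. Next, for a reduced geodesic $\varpi'$ with $\varpi'\cap\mathcal D^{\circ}\neq\emptyset$, the index $i_1$ such that $\varpi'$ leaves $\mathcal D$ from $\mathbf e_{i_1}'$ is determined by which open edge the exit point $Q$ lies in; because $\varpi'$ does not pass through any $\Delta(2,3,7)$-translate of $\tau_3$ in $S_0$ — in particular not through the endpoints $\tau_3$ of the edges $\mathbf e_i'$ — the exit point lies in the \emph{interior} of a single half-open edge, and a small perturbation of $\varpi'$ within $\mathcal G_{\alpha}$ leaves this unchanged. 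Hence $\varpi'\mapsto i_1$ is locally constant on the subset of $U\!-\!\{\varpi\}$ where $\varpi'\cap\mathcal D^{\circ}\neq\emptyset$ — which by Lemma \ref{refine lem1} is all of $U\!-\!\{\varpi\}$. The same reasoning shows $\varpi'\mapsto (g_7^{i_1}g_2)^{-1}\varpi'$ depends continuously on $\varpi'$, and that $(g_7^{i_1}g_2)^{-1}\varpi'$ again meets $\mathcal D^{\circ}$ (by Lemma \ref{lem trivial} (1)) and again avoids $\Delta(2,3,7)\tau_3$ inside $S_0$ (this set is $\Delta(2,3,7)$-stable). So I can iterate: by induction on $k$, the map $\varpi'\mapsto i_k(\varpi')$ is locally constant on $U\!-\!\{\varpi\}$, and on each connected component $U_+$, $U_-$ it is therefore constant.

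To conclude, I connect $\varpi^{(1)}$ and $\varpi^{(2)}$ by a path inside the connected component $U_+$ (resp. $U_-$); along this path every $i_k$ is constant, so $i_k = j_k$ for all $k\geq 1$. Finally, since $B_{1,0}^{-1}R_1$ and $B_{2,0}^{-1}R_2$ both lie in $\mathcal D^{\circ}$ and $B_{1,0}^{-1}\varpi^{(1)}$, $B_{2,0}^{-1}\varpi^{(2)}$ are both reduced with the same continued fraction tail, I would invoke Lemma \ref{lem reduced} (\ref{torsion}) to force $B_{1,0}=\pm B_{2,0}$: more carefully, one compares the two reduced normalizations of a single geodesic on the path and notes that any element of $\Delta(2,3,7)$ carrying one reduced representative (with a point in $\mathcal D^{\circ}$) to another is $\pm 1$.

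I expect the main obstacle to be the bookkeeping in the inductive step: one must verify simultaneously that (a) the exit edge stays in the open part of a half-open edge $\mathbf e'_{i_k}$ so that $i_k$ is well-defined and locally constant, (b) the image geodesic $(g_7^{i_k}g_2)^{-1}\varpi'$ still meets $\mathcal D^{\circ}$ so the induction continues, and (c) the $\Delta(2,3,7)\tau_3$-avoidance inside $S_0$ is preserved. Point (c) is where Lemma \ref{refine lem1}'s strong conclusion is essential — it is precisely designed so that the degenerate branches (2), (3), (4) of Lemma \ref{lem trivial} are never encountered along $U\!-\!\{\varpi\}$. Once these three invariants are packaged into a single inductive hypothesis, the connectedness argument is routine.
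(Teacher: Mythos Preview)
Your local-constancy strategy is sound but differs from the paper's argument. The paper does not track an inductive invariant through successive applications of $(g_7^{i_k}g_2)^{-1}$; instead it works directly with the original geodesics and the interval $I\subset U_+$ between $\varpi^{(1)}$ and $\varpi^{(2)}$. The key observation is that $p_\alpha^{-1}(I)\cap S_0\cap\Delta(2,3,7)\tau_3=\emptyset$ (since every geodesic in $I$ lies in $U\!-\!\{\varpi\}$), so for any translate $\gamma\mathcal D\subset S_0$ meeting $\varpi^{(1)}$, the endpoints of each edge $\gamma\mathbf e_i$ lie outside $p_\alpha^{-1}(I)$; hence if $\gamma\mathbf e_i$ meets $\varpi^{(1)}$ it must also meet $\varpi^{(2)}$, forcing the same entering/leaving edges at every step. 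This interval argument handles all steps at once and avoids the bookkeeping you anticipate in (a)--(c).

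Your step (c) has a genuine gap as written. The stability of $\Delta(2,3,7)\tau_3$ under the group is not enough, because $S_0$ is \emph{not} $\Delta(2,3,7)$-stable: from $\varpi'\cap S_0\cap\Delta(2,3,7)\tau_3=\emptyset$ you cannot conclude $(g_7^{i_1}g_2)^{-1}\varpi'\cap S_0\cap\Delta(2,3,7)\tau_3=\emptyset$ just by equivariance. What saves you is the extra observation that for a reduced $\varpi'$ leaving $\mathcal D$ from $\mathbf e_{i_1}'$, one has
\[
\varpi'\cap (g_7^{i_1}g_2)S_0 \;=\; \bsr{Q,\alpha}\;\subset\;\bsr{P,\alpha}\;=\;\varpi'\cap S_0
\]
as segments of $\varpi'$ (where $P\in\mathbf e_0$ is the entry point and $Q\in\mathbf e_{i_1}'$ the exit point). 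Combining this nesting with the stability of $\Delta(2,3,7)\tau_3$ does propagate the avoidance condition, and then your induction goes through. The paper's approach sidesteps this issue entirely by never leaving the ambient $S_0$.
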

\begin{proof} 
We only prove the case $\varpi^{(1)}, \varpi^{(2)} \in U_+$ since the case $\varpi^{(1)}, \varpi^{(2)} \in U_-$ can be proved similarly. 
By Lemma \ref{lem reduced} (\ref{torsion}) and the definition of the algorithm, it suffices to show the following: 
\begin{claim}\label{refine claim2}
Suppose $\gamma \in \Delta(2,3,7)$ satisfies $\gamma \mathcal D \cap  \varpi^{(1)} \neq \emptyset$ and $\gamma \mathcal D \cap  \varpi^{(2)} \neq \emptyset$ and $\gamma \mathcal D \subset S_0$. Then both $\gamma^{-1} \varpi^{(1)}$ and $\gamma^{-1} \varpi^{(2)}$ intersect with $\mathcal D^{\circ}$ and enter (resp. leave) $\mathcal D$ from the same edge, say $\mathbf e_i$ (resp. $\mathbf e_j'$). 
\end{claim}
By Lemma \ref{refine lem1}, $\gamma \mathcal D \cap \varpi^{(1)}$ and $\gamma \mathcal D \cap \varpi^{(2)}$ do not contain vertices of $\gamma \mathcal D$. Therefore $\gamma^{-1} \varpi^{(1)}$ and $\gamma^{-1} \varpi^{(2)}$ must intersect with $\mathcal D^{\circ}$. Now let $I \subset U_+$ be the closed interval between $\varpi^{(1)}$ and $\varpi^{(2)}$ contained in $U_+$ (via the identification $\mathcal G_{\alpha} \simeq \P^1(\R) \!-\!\{\alpha\}$). Then because $U$ satisfies the condition in Lemma \ref{refine lem1}, we have
\begin{align}
p_{\alpha}^{-1}(I) \cap S_0 \cap \Delta(2,3,7)\tau_3 = \emptyset. \label{refine eqn claim2}
\end{align}
Suppose $\gamma^{-1} \varpi^{(1)}$ enters (resp. leaves) $\mathcal D$ from $\mathbf e_i$ (resp. $\mathbf e_j'$). Now, since the endpoints $\gamma g_7^i\tau_3$ and $\gamma g_7^i\tau_3'$ (resp. $\gamma g_7^j\tau_3$ and $\gamma g_7^j\tau_3'$) of $\gamma \mathbf e_i$ (resp. $\gamma \mathbf e_j'$) are contained in $S_0 \cap \Delta(2,3,7)\tau_3$, they must not be contained in $p_{\alpha}^{-1}(I)$ by (\ref{refine eqn claim2}). 
On the other hand, the geodesic segment $\gamma \mathbf e_i$ (resp. $\gamma \mathbf e_j'$) can intersect with $\varpi^{(1)}$ at most once. 
Therefore, $\gamma \mathbf e_i$ (resp. $\gamma \mathbf e_j'$) must also intersect with $\varpi^{(2)}$. 
Finally, if $\gamma^{-1}\varpi^{(2)}$ leaves (resp. enters) $\mathcal D$ from $\mathbf e_i$ (resp. $\mathbf e_j'$), this contradicts to the assumption that $\varpi^{(2)}$ is an oriented geodesic which goes to $\alpha$. 
This proves the claim and hence the lemma.
\end{proof}

Now we present the refined version of the above theorem in which we can get rid of the condition on $\beta$.

\begin{thm}[Lagrange's theorem for $\Delta(2,3,7)\bs \mathfrak h$, $\beta$-free version]\label{thm lagrange}~
\begin{enumerate}[$(1)$]
\item Let $\alpha, \beta \in \R\cup \{\infty\}$ such that $\alpha\neq \beta$, and let $\varpi=\varpi_{\beta \to \alpha}$ be the oriented geodesic on $\mathfrak h$ joining $\beta$ to $\alpha$. Let 
\begin{align}
B_0^{-1}\varpi= \llbracket i_1,i_2,i_3, \cdots \rrbracket_{\Delta(2,3,7)} \label{lagrange thm GCF exp}
\end{align}
be the geodesic continued fraction expansion of $\varpi$ with respect to $\Delta(2,3,7)$. For $k \geq 1$, set $A_k:=g_7^{i_k}g_2 \in \Delta(2,3,7)$ and $B_k:=B_0A_1A_2\cdots A_k \in \Delta(2,3,7)$. 
Then the following conditions are equivalent.
\begin{enumerate}[{\rm (i)}]
\item The endpoint $\alpha$ is of the following form:
\begin{align}
\alpha =\frac{1}{2w}(z-\bar z \pm \sqrt{D_{z,w}}) \label{eqn alpha}
\end{align}
for some $z,w\in L$ such that $D_{z,w}:=(z-\bar z)^2+4\eta w\bar w > 0$. 
Here when $w=0$, we assume $\alpha=\infty$ (resp. $0$) if the sign in $\alpha$ is $+$ (resp. $-$).
\item There exist $l_0 \geq 1$ and $k_0 \geq 0$ such that $B_{k+l_0}^{-1}\alpha=B_k^{-1}\alpha$ for all $k \geq k_0$.
\item There exist $l_0 \geq 1$ and $k_0 \geq 0$ such that $i_{k+l_0}=i_k$ for all $k > k_0$, i.e., the geodesic continued fraction expansion becomes periodic.
\end{enumerate}
\item Suppose that the above conditions are satisfied for $z,w \in L$ and $l_0, k_0\geq 1$. 
Assume that $l_0$ is the minimal element such that the condition {\rm (iii)} holds.  
Put $\gamma_0:= B_{k_0+l_0}B_{k_0}^{-1}=B_{k_0}A_{k_0+1}\cdots A_{k_0+l_0}B_{k_0}^{-1}$. Then we have $\gamma_0 \in \mathcal O_{z,w}^1$, and $\gamma_0$ gives the fundamental unit of $\mathcal O_{z,w}^1$. Equivalently, $\rho_{\alpha}(\gamma_0) \in F(\sqrt{D_{z,w}})$ gives the fundamental unit of $U_{\mathcal O,z,w/F}$.
\end{enumerate}
\end{thm}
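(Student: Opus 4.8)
The plan is to establish part (1) through the cyclic implications (iii) $\Rightarrow$ (ii) $\Rightarrow$ (i) $\Rightarrow$ (iii), and then to deduce part (2) by a minimality argument. For (iii) $\Rightarrow$ (ii), assume $i_{k+l_0}=i_k$ for all $k>k_0$, so that $A_{k+l_0}=A_k$ for $k\ge k_0$; putting $\gamma_0:=B_{k_0+l_0}B_{k_0}^{-1}$ one checks by induction that $B_{k+l_0}=\gamma_0 B_k$ for every $k\ge k_0$. The reduced geodesics $\varpi_{k_0}$ and $\varpi_{k_0+l_0}$ then have identical geodesic continued fraction expansions, so Corollary \ref{cor convergence} gives $B_{k_0+l_0}^{-1}\alpha=B_{k_0}^{-1}\alpha$, i.e. $\gamma_0\alpha=\alpha$, and combining this with $B_{k+l_0}=\gamma_0 B_k$ yields (ii). For (ii) $\Rightarrow$ (i), the element $\gamma_0:=B_{k_0+l_0}B_{k_0}^{-1}\in\Delta(2,3,7)$ fixes $\alpha$ and is not $\pm 1$, since otherwise $B_{k_0+l_0}\mathcal D=B_{k_0}\mathcal D$ would contradict Proposition \ref{prop GCF} (2)(iii); as $\Delta(2,3,7)$ is cocompact it contains no parabolic elements, so $\gamma_0$ is hyperbolic. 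Writing $\gamma_0=\theta_{z,w}$ via (\ref{image of A}), the discriminant of its characteristic polynomial is $D_{z,w}=(z-\bar z)^2+4\eta w\bar w>0$ and the fixed point $\alpha$ of $\gamma_0$ has exactly the form (\ref{eqn alpha}); the degenerate case $w=0$ is treated separately by inspecting the diagonal shape of $\gamma_0$.

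The implication (i) $\Rightarrow$ (iii) is the substantive one. Replacing $\varpi$ by a $\Delta(2,3,7)$-translate — which preserves both (i) and (iii) by Proposition \ref{prop GCF} (1) together with the fact that conjugating $\theta_{z,w}$ by an element of $\Delta(2,3,7)$ preserves $D_{z,w}$ — I may assume $\varpi$ is reduced with $\varpi\cap\mathcal D^\circ\ne\emptyset$. By Lemma \ref{lem ass field}, $\alpha$ determines the quadratic field $K_{z,w}$ and an order $\mathcal O_{z,w}$ carrying a hyperbolic generator $\gamma_0\in\mathcal O_{z,w}^1$; let $\beta_0$ be the second fixed point of $\gamma_0$ and set $\varpi^*:=\varpi_{\beta_0\to\alpha}$, oriented so that $\alpha$ is its attracting end. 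Both endpoints of $\varpi^*$ are of the special form, so by Theorem \ref{lagrange closed geod} its geodesic continued fraction expansion is periodic, with only finitely many reduced geodesics occurring among its tails. Since $\varpi$ and $\varpi^*$ share the forward endpoint $\alpha$ they are asymptotic in $\mathfrak h$; using Theorem \ref{thm convergence} one sees that the sequence of heptagons $B_k\mathcal D$ crossed by $\varpi$ must eventually coincide with the (periodic) sequence crossed by $\varpi^*$, so that the reduced tails $\varpi_k=B_k^{-1}\varpi$ of $\varpi$ run through the periodic cycle of reduced tails of $\varpi^*$. Lemmas \ref{refine lem1} and \ref{refine lem2} are needed to make this rigorous, because $\varpi^*$ itself may cross the $\Delta(2,3,7)$-orbit of the heptagon vertices, where the coding is discontinuous: they show that $\varpi_k$ and $\varpi_{k+L}$ (with $L$ the period of $\varpi^*$) lie in the same component of a vertex-avoiding neighborhood and hence have identical $i$-sequences, giving $i_{j+L}=i_j$ for all sufficiently large $j$, which is (iii).

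For part (2), with $l_0$ minimal the element $\gamma_0:=B_{k_0+l_0}B_{k_0}^{-1}$ is, by the arguments above, a hyperbolic element of $\Delta(2,3,7)$ fixing $\alpha$, hence lies in $K_{z,w}^\times\cap\mathcal O^1=\mathcal O_{z,w}^1$ by Lemma \ref{lem ass field} (5). If $\gamma_0$ were not fundamental one could write $\gamma_0=\pm\varepsilon_0^m$ with $m\ge 2$ and $\varepsilon_0$ the fundamental (hyperbolic) unit of $\mathcal O_{z,w}^1$, and then running the synchronization argument of (i) $\Rightarrow$ (iii) with $\varepsilon_0$ in place of $\gamma_0$ would exhibit a period of the expansion of $\varpi$ strictly shorter than $l_0$, a contradiction; thus $\gamma_0=\pm\varepsilon_0$ and, since $\rho_\alpha$ is a field isomorphism, $\rho_\alpha(\gamma_0)$ is the fundamental unit of $U_{\mathcal O,z,w/F}$. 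I expect the main obstacle to be precisely the synchronization step inside (i) $\Rightarrow$ (iii): a bare continuity argument fails along geodesics through heptagon vertices, and the comparison of the tails of $\varpi$ with those of the periodic geodesic $\varpi^*$ must be routed carefully through the vertex-avoiding neighborhoods of Lemmas \ref{refine lem1} and \ref{refine lem2}, keeping track of which component the tails land in.
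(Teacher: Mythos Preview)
Your outline follows the paper's approach closely and identifies all the key tools correctly. Two points need correction.

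First, in (i) $\Rightarrow$ (iii), the normalization that matters is not on $\varpi$ but on the closed geodesic $\varpi^*$: Lemmas \ref{refine lem1} and \ref{refine lem2} require the \emph{closed} geodesic to be reduced with $\varpi^*\cap\mathcal D^\circ\neq\emptyset$, so that the vertex-avoiding neighborhood $U$ is built around $\varpi^*$ in $\mathcal G_\alpha$. The paper does exactly this, and then compares not the tails $\varpi_k$ of $\varpi$ directly, but the translates $\delta_0^{-N_2}\varpi$ and $\delta_0^{-N_2-1}\varpi$ (both lying in the same component $U_+$ or $U_-$ of $U\setminus\{\varpi^*\}$); these two translates are then matched, via Lemma \ref{lem reduced} (\ref{torsion}), to the tails $B_{k_0}^{-1}\varpi$ and $B_{k_0+l_0}^{-1}\varpi$ of the original expansion. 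Your sentence ``$\varpi_k$ and $\varpi_{k+L}$ lie in the same component of a vertex-avoiding neighborhood'' short-circuits this: $\varpi_k\in\mathcal G_{B_k^{-1}\alpha}$, not $\mathcal G_\alpha$, so Lemma \ref{refine lem2} does not apply to them as stated.

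Second, your argument for (2) has the inequality backwards. Running the synchronization argument with the fundamental unit $\varepsilon_0$ produces \emph{some} period $l_0'$ of the sequence $(i_k)$ together with the identity $\varepsilon_0=\pm B_{k_0'+l_0'}B_{k_0'}^{-1}$; by the minimality of $l_0$ one has $l_0\mid l_0'$, not $l_0'<l_0$. Writing $l_0'=ml_0$ and using the periodicity gives $\varepsilon_0=\gamma_0^m$; since $\varepsilon_0$ generates $\mathcal O_{z,w}^1/\{\pm1\}$ and $\gamma_0\in\mathcal O_{z,w}^1$, this forces $m=1$. The claim that synchronizing with $\varepsilon_0$ would directly yield a period strictly shorter than $l_0$ is not justified and in fact contradicts the minimality hypothesis.
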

\begin{proof}
(1) The implication (ii) $\Rightarrow$ (i) is clear. The implication (iii) $\Rightarrow$ (ii) follows directly from the convergence of geodesic continued fraction (Corollary \ref{cor convergence}). We prove (i) $\Rightarrow$ (iii). 
Let $\beta':=\frac{z-\bar z}{w} -\alpha$, and let $\varpi':=\varpi_{\beta'\to \alpha}$ be the oriented geodesic joining $\beta'$ to $\alpha$. 
Here if $w=0$ and $\alpha=0$ (resp. $\alpha=\infty$), we assume $\beta'=\infty$ (resp. $\beta'=0$).
We use this auxiliary geodesic $\varpi'$ about which we have already studied in Theorem \ref{lagrange closed geod}. In particular the case where $\beta=\beta'$ is already proved in Theorem \ref{lagrange closed geod}. Therefore we assume $\beta \neq \beta'$. 
By Proposition \ref{geodesic lagrange}, there exists a hyperbolic element in $\Gamma_{\varpi'}$. Let  $\delta_0 \in \Gamma_{\varpi'}$ be any hyperbolic element.
We assume that $\alpha$ is the attracting point and $\beta'$ is the repelling point of $\delta_0$. The key fact to prove this theorem is $\lim_{n\to \infty}\delta_0^{-n}\beta=\beta'$. 

By Lemma \ref{lem reduced} (\ref{reducibility2}) and Proposition \ref{prop GCF} (1), we may assume $\varpi'$ is reduced and $\varpi' \cap \mathcal D^{\circ}\neq \emptyset$. Take $R' \in \varpi' \cap \mathcal D^{\circ}$. 
Then we can use Lemma \ref{refine lem1} to take a connected open neighborhood $U \subset \mathcal G_{\alpha}$ of $\varpi'$ 
such that for any $\varpi'' \in U\!-\!\{\varpi'\}$, we have $\varpi'' \cap \mathcal D^{\circ} \neq \emptyset$, and 
\begin{align}
\varpi'' \cap S_0 \cap \Delta(2,3,7) \tau_3 = \emptyset.
\end{align}
We denote by $U_+,U_-$ the connected components of $U\!-\!\{\varpi'\}$ as in (\ref{U+}), (\ref{U-}). 
Since we have assumed that $\beta \neq \beta'$, we have $\varpi \in \mathcal G_{\alpha} \!-\!\{\varpi'\}= \mathcal G_{\alpha,\beta',+} \cup \mathcal G_{\alpha,\beta',-}$. 
Suppose $\varpi \in \mathcal G_{\alpha,\beta',+}$ (resp. $\mathcal G_{\alpha,\beta',-}$). 
Then, since we have $\lim_{n\to \infty}\delta_0^{-n}\beta=\beta'$ and $\delta_0 \mathcal G_{\alpha,\beta',+}=\mathcal G_{\alpha,\beta',+}$ (resp. $\delta_0 \mathcal G_{\alpha,\beta',-}=\mathcal G_{\alpha,\beta',-}$), there exists $N_1 \geq 0$ such that $\delta_0^{-n} \varpi \in U_+$ (resp. $U_-$) for all $n \geq N_1$.
Take $R_n \in \delta_0^{-n} \varpi \cap \mathcal D^{\circ}$ for each $n\geq N_1$. Since $\alpha$ is the attracting point of $\delta_0$, we have $\lim_{n \to \infty} \delta_0^nR_n=\alpha$. 

Now, as in the proof of Theorem \ref{lagrange closed geod}, we define $P_k, Q_k \in \varpi$ ($k\geq 0$) so that $\varpi \cap B_k\mathcal D=\overrightarrow{P_kQ_k}$. 
Let us fix a constant $M \in \Z_{\geq 0}$ arbitrarily. (We use this later.)
By Proposition \ref{prop GCF} and Theorem \ref{thm convergence}, we have $P_{k+1}=Q_k$ and $\lim_{k\to \infty}P_k=\lim_{k\to \infty}Q_k=\alpha$. Therefore, there exist $N_2 \geq N_1$, $k_0 \geq M$, and $l_0 \geq 1$ such that 
\begin{align}
\delta_0^{N_2} R_{N_2} & \in \bsr{P_{k_0}, Q_{k_0}} \subset \varpi,  \label{RPQ1}\\ 
\delta_0^{N_2+1} R_{N_2+1} & \in \bsr{P_{k_0+l_0}, Q_{k_0+l_0}} \subset \varpi. \label{RPQ2}
\end{align}
Let 
\begin{align}
C_0^{-1}(\delta_0^{-N_2}\varpi) &=\llbracket j_1, j_2, \dots \rrbracket_{\Delta(2,3,7)} \label{eqn350}\\
(C_0')^{-1}(\delta_0^{-N_2-1}\varpi) &=\llbracket j_1', j_2', \dots \rrbracket_{\Delta(2,3,7)} \label{eqn351}
\end{align}
be the geodesic continued fraction expansions of $\delta_0^{-N_2}\varpi$ and $\delta_0^{-N_2-1}\varpi$ such that 
\begin{align}
C_0^{-1}R_{N_2} \in \mathcal D^{\circ} \quad \text{ and } \quad (C_0')^{-1}R_{N_2+1} \in \mathcal D^{\circ}. \label{eqn3522}
\end{align} 
Then, by Lemma \ref{refine lem2}, we have $C_0=\pm C_0'$ and $j_k=j_k'$ for all $k\geq 1$. On the other hand, from the geodesic continued fraction expansion (\ref{lagrange thm GCF exp}), we also obtain the following geodesic continued fraction expansions: 
\begin{align}
B_{k_0}^{-1}\varpi &=\llbracket i_{k_0+1}, i_{k_0+2}, \dots \rrbracket_{\Delta(2,3,7)}, \label{eqn352}\\
B_{k_0+l_0}^{-1}\varpi &=\llbracket i_{k_0+l_0+1}, i_{k_0+l_0+2}, \dots \rrbracket_{\Delta(2,3,7)}, \label{eqn353}
\end{align}

Now we apply Lemma \ref{lem reduced} (\ref{torsion}) to 
\begin{itemize}
\item the reduced oriented geodesic: $C_0^{-1}(\delta_0^{-N_2}\varpi)$ (resp. $(C_0')^{-1}(\delta_0^{-N_2-1}\varpi)$), 
\item point: $z=C_0^{-1}R_{N_2} \in \mathcal D^{\circ}$, (resp. $z=(C_0')^{-1}R_{N_2+1} \in \mathcal D^{\circ}$),  
\item $\gamma= B_{k_0}^{-1}\delta_0^{N_2}C_0 \in \Delta(2,3,7)$, (resp. $ \gamma= B_{k_0+l_0}^{-1}\delta_0^{N_2+1}C_0' \in \Delta(2,3,7)$).
\end{itemize}
Because we have
\begin{itemize}
\item $\gamma C_0^{-1}(\delta_0^{-N_2}\varpi)=B_{k_0}^{-1}\varpi $ (resp. $\gamma (C_0')^{-1}(\delta_0^{-N_2-1}\varpi)=B_{k_0+l_0}^{-1}\varpi $) is reduced by the definition of the geodesic continued fraction expansion, 
\item $\gamma z= B_{k_0}^{-1}\delta_0^{N_2}R_{N_2} \in \mathcal D$ (resp. $\gamma z= B_{k_0+l_0}^{-1}\delta_0^{N_2+1}R_{N_2+1} \in \mathcal D$) by  (\ref{RPQ1}) (resp. (\ref{RPQ2})),
\end{itemize}
%
we obtain
\begin{align}
B_{k_0}^{-1}\delta_0^{N_2}C_0 =\pm 1 \quad \text{ and } \quad B_{k_0+l_0}^{-1}\delta_0^{N_2+1}C_0' =\pm 1. \label{eqn356}
\end{align} 
Then it follows that both (\ref{eqn350})  and (\ref{eqn352}) give the geodesic continued fraction expansion of $C_0^{-1}\delta_0^{-N_2}\varpi = B_{k_0}^{-1}\varpi$, and both (\ref{eqn351})  and (\ref{eqn353}) give the geodesic continued fraction expansion of $(C_0')^{-1} \delta_0^{-N_2-1}\varpi = B_{k_0+l_0}^{-1}\varpi$. 
Therefore, by using Proposition \ref{prop GCF} (1), we finally obtain
\begin{align}
i_{k_0+k}=j_k=j'_k=i_{k_0+l_0+k} \label{eqn357}
\end{align}
for all $k\geq 1$. 
Thus we obtain (iii). 

(2) We may assume $\beta \neq \beta'$, $\varpi'$ is reduced and $\varpi \cap \mathcal D^{\circ} \neq \emptyset$, where $\beta'$ and $\varpi'$ are the same objects as in the proof of the implication (i) $\Rightarrow$ (iii). 
Suppose that the equivalent conditions (i) $\sim$ (iii) are satisfied. More precisely, suppose that the condition (iii) is satisfied for $k_0,l_0$, and that $l_0$ is the minimal element such that the condition {\rm (iii)} holds. Then clearly the condition (ii) is also satisfied for the same $k_0$ and $l_0$. 
By the condition (ii), we have $B_{k_0+l_0}B_{k_0}^{-1}\alpha =\alpha$. Therefore, by Lemma \ref{lem ass field} (5), we obtain $\gamma_0:=B_{k_0+l_0}B_{k_0}^{-1} \in K_{z,w} \cap \mathcal O^1= \mathcal O_{z,w}^1 =\Gamma_{\varpi'}$. 
On the other hand, let $\delta_0$ be the hyperbolic element which generates $\Gamma_{\varpi'}=\mathcal O_{z,w}^1$ up to $\pm 1$, and assume that $\alpha$ is the attracting point of $\delta_0$. Then, by the above argument, especially from (\ref{eqn356}) and (\ref{eqn357}), 
there exist $k_0' \geq k_0$ and $l_0'\geq 1$ such that $i_{k+l_0'}=i_{k}$ for all $k >k_0'$ and $\delta_0=\pm B_{k_0'+l_0'}B_{k_0'}^{-1}$. (Here we choose $M=k_0$ in the above argument.)
Now, by the periodicity (iii), we have $A_{k+l_0}=A_{k}$ for all $k \geq k_0$. Therefore we have $\gamma_0=B_{k_0+l_0}B_{k_0}^{-1}=B_{k_0'+l_0}B_{k_0'}^{-1}$ by the definition of $B_k$. 
Thus, again by the periodicity (iii) and the minimality of $l_0$, we obtain 
\begin{align}
\delta_0=\pm B_{k_0'+l_0'}B_{k_0'}^{-1}=B_{k_0'+m l_0}B_{k_0'}^{-1}=(B_{k_0'+l_0}B_{k_0'}^{-1})^m=\gamma_0^m
\end{align}
for some $m \geq 1$. 
Then, since $\delta_0$ generates $\Gamma_{\varpi'}$, we obtain $m=1$. Therefore, we get $l_0'=l_0$, and hence $\gamma_0=B_{k_0+l_0}B_{k_0}^{-1} =\pm \delta_0$ becomes the fundamental unit. This completes the proof.
\end{proof}

\begin{rmk}\label{rmk beta free}
The results in \cite{series86} and \cite{abramskatok19} are interesting, and seem to be related to the ``$\beta$-free'' version of the periodicity. Although their results do not cover the case of the $(2,3,7)$-triangle group, they compare the geodesic continued fractions (Morse codings) and the ``boundary expansions'' of geodesics which essentially depends only on the one end point $\alpha$. It may be possible to prove (1) of Theorem \ref{thm lagrange} using the similar arguments to those in \cite{series86} and \cite{abramskatok19}. Our proof is different from their methods. 
\end{rmk}

\subsection{Some explicit constants}\label{constants}

Here we present some explicit computation of the elements such as $g_2, g_3, g_7$, $\bm a_i, \bm b_i, \bm c_i$ ($i \in \Z/7\Z \!-\!\{0\}$). 

Recall that $A$ is the quaternion algebra $\quat{\eta,\eta}{F}=F+Fi+Fj+Fk$ over $F=\Q(\eta)$, where $i,j,k$ satisfy $i^2=j^2=\eta$ and $ij=-ji=k$, and $L=\Q(\sqrt{\eta}) \subset \R$ is the fixed splitting field of $A$. 
We have embedded $A$ into $M_2(\R)$ as
\begin{align}
\iota: A 
\overset{\sim}{\rightarrow} \left\{ \left.
\begin{pmatrix}
z&\eta \bar{w}\\
w&\bar{z}
\end{pmatrix}\right|
z,w \in L
\right\} \subset M_2(\R); 
\begin{cases}
i &\mapsto 
\begin{pmatrix}
0&\eta \\
1&0\\
\end{pmatrix}\vspace{1mm}\\
j &\mapsto 
\begin{pmatrix}
\sqrt{\eta}&0\\
0&-\sqrt{\eta}\\
\end{pmatrix}
\end{cases}.
\end{align}
Then the elements $g_2, g_3, g_7 \in \Delta(2,3,7) \subset SL_2(\R)$ (cf. (\ref{g2}), (\ref{g3}), (\ref{g7})) are of the following form: 
\begin{align}
g_2&=
\begin{pmatrix}
0&-\sqrt{\eta}\\
1/\sqrt{\eta}&0\\
\end{pmatrix}, \label{g_2} \\
g_3&=\frac{1}{2}
\begin{pmatrix}
1+(\eta^2-2)\sqrt{\eta}&-(\eta^2+\eta-1)\sqrt{\eta}\\
(3-\eta^2)\sqrt{\eta}&1-(\eta^2-2)\sqrt{\eta}\\
\end{pmatrix}, \label{g_3} \\
g_7&=\frac{1}{2}
\begin{pmatrix}
\eta^2+\eta-1&\eta^2-1-\sqrt{\eta}\\
2-\eta^2+(\eta^2+\eta-2)\sqrt{\eta}&\eta^2+\eta-1\\
\end{pmatrix}. \label{g_7}
\end{align}
Put $\theta=\sqrt{\eta}$ for simplicity. Then we can compute $\bm a_i, \bm b_i, \bm c_i$ ($i \in \Z/7\Z\!-\!\{0\}$) as follows:
\begin{align}
&\left\{
\begin{alignedat}{4}
\bm a_1&=-\bm a_{-1}& &= \theta-\theta^2+\theta^4-\theta^5 \quad\quad\quad~& &=-\eta+\eta^2+(1-\eta^2)\sqrt{\eta} \quad\quad~  \\
\bm b_1&=\bm b_{-1}&  &= 4+8\theta^2-8\theta^4 & &= 4(1+2\eta -2\eta^2)    \\
\bm c_1&=-\bm c_{-1}& &= \theta+\theta^2-\theta^4-\theta^5 & &=\eta-\eta^2+(1-\eta^2)\sqrt{\eta} 
\end{alignedat}
\right. \label{eqn a1}\\
&\left\{
\begin{alignedat}{4}
\bm a_2&=-\bm a_{-2}& &= -2-3\theta+\theta^2+\theta^4+\theta^5~& &= -2+\eta+\eta^2+(-3+\eta^2)\sqrt{\eta}   \\
\bm b_2&=\bm b_{-2}& &= -8+4\theta^2 +4\theta^4& &=4(-2+\eta+\eta^2)    \\
\bm c_2&=-\bm c_{-2}& &= 2-3\theta-\theta^2-\theta^4+\theta^5& &= 2-\eta-\eta^2+(-3+\eta^2)\sqrt{\eta}
\end{alignedat}
\right. \label{eqn a2} \\
&\left\{
\begin{alignedat}{4}
\bm a_3&=-\bm a_{-3}& &= -\theta+\theta^2-2\theta^3+\theta^4-\theta^5 & &=\eta+\eta^2-(1+2\eta+\eta^2)\sqrt{\eta}   \\
\bm b_3&=\bm b_{-3}& &= 4+12\theta^2+4\theta^4 & &= 4(1+3\eta +\eta^2)    \\
\bm c_3&=-\bm c_{-3}& &= -\theta-\theta^2-2\theta^3-\theta^4-\theta^5 & &=-\eta-\eta^2-(1+2\eta+\eta^2)\sqrt{\eta}. 
\end{alignedat}
\right. \label{eqn a3}
\end{align}
We can also easily check the following properties of these constants. For $j \in \Z/7\Z\!-\!\{0\}$,
\begin{align}
\bm a_j &= g_7^j~ 0 = (g_7^j g_2) \infty  \in \mathcal O_L,  \label{ai geom}\\
-\bm c_j &=\overline{\bm a_j} = (g_7^j g_2)^{-1} \infty  \in  \mathcal O_L, \\
\bm b_j/4  &= \frac{\eta}{\left(2 \cos \left(\frac{j \pi}{7}\right)\right)^2} = \frac{2\cos\left(\frac{2\pi}{7}\right)}{\left(2 \cos \left(\frac{ j \pi}{7}\right)\right)^2} \in \mathcal O_F^{\times}, \\
|\bm a_1|&=|\bm a_{-1}|<|\bm a_2|=|\bm a_{-2}|<\sqrt{\eta} < |\bm a_3|=|\bm a_{-3}|. \label{ai value}
\end{align}

\subsection{Examples}\label{subsection examples}
Now we present some examples to illustrate our main theorems (Theorem \ref{lagrange closed geod} and Theorem \ref{thm lagrange}).
We describe the following items:
\begin{enumerate}[(I)]
\item Input data: $(\alpha, \beta, \varpi_{\beta \to \alpha})$, where $\alpha, \beta \in \R\cup\{\infty\}$ such that $\alpha \neq \beta$ and $\varpi_{\beta \to \alpha}$ is the oriented geodesic joining $\beta$ to $\alpha$ as before.
\item The resulting geodesic continued fraction expansion of $\varpi_{\beta \to \alpha}$.
\end{enumerate}
Suppose the geodesic continued fraction expansion of $\varpi_{\beta \to \alpha}$ becomes periodic (which is the case we are mainly interested in). 
As in the classical theory of continued fraction, we denote by 
 \begin{align} 
 \varpi_{\beta \to \alpha}= \llbracket B_0; i_1, \dots, i_{k_0}, \overline{i_{k_0+1} , \dots, i_{k_0+l_0}} \rrbracket_{\Delta(2,3,7)}, 
 \end{align}
the periodic geodesic continued fraction expansion with period $i_{k_0+1} , \dots, i_{k_0+l_0}$.
Then, furthermore we present
\begin{enumerate}
\item[(I)$'$] The data $z,w \in L$ for which $\alpha$ can be expressed as in Theorem \ref{thm lagrange} (i), and the associated quadratic extension $K_{z,w}\simeq F(\sqrt{D_{z,w}})$ over $F$.
\item[(III)]  The fundamental unit of $\mathcal O_{z,w}^1 \simeq U_{\mathcal O,z,w/F}$ obtained from the period of geodesic continued fraction expansion. Cf. Theorem \ref{lagrange closed geod} and Theorem \ref{thm lagrange}.
\end{enumerate}

We put $\theta:=\sqrt{\eta}$ as before. 
Moreover, for the geodesic continued fraction expansion (\ref{GCF exp algo}) of $\varpi$, we put $A_k:=g_7^{i_k}g_2$, $B_k:=B_0A_1\cdots A_k$ and $\varpi_k:=B_k^{-1}\varpi$. 
\begin{ex}\label{ex0}
\begin{enumerate}[(I)]
\item Input: $\alpha=0$, $\beta=\infty$, $\varpi:= \varpi_{\beta \to \alpha}$. 
\item[(I)$'$] The corresponding data: $z=\sqrt{\eta}$, $w=0$, $D_{z,w}=4\eta$. \\ 
The associated quadratic extension: $K_{z,w} \simeq F(\sqrt{\eta})=L$. 
\item The geodesic continued fraction expansion:
\begin{align}
\varpi = \llbracket 1; \overline{3, -2, 3} \rrbracket_{\Delta(2,3,7)}.
\end{align}
\item From the period, we obtain the following fundamental unit $\gamma_0$, i.e., a generator of $\mathcal O_{z,w}^1 = \Gamma_{\varpi_{\beta \to \alpha}}$ up to $\pm 1$: 
\begin{align}
\gamma_0 &:=B_3B_0^{-1}=(g_7^3g_2)(g_7^{-2}g_2)(g_7^3g_2)  \\
&= 
\begin{pmatrix}
-1-\theta-\theta^2 +\theta^3+\theta^5&0\\
0&-1+\theta-\theta^2 -\theta^3-\theta^5 \\
\end{pmatrix}\\
&=
\begin{pmatrix}
-1-\eta -(1-\eta-\eta^2) \sqrt{\eta}&0\\
0&-1-\eta +(1-\eta-\eta^2) \sqrt{\eta} \\
\end{pmatrix}.
\end{align}
Under the identification (\ref{ev map}), the fundamental unit $\varepsilon_0=\rho_{\alpha}(\gamma_0)$ of $U_{\mathcal O,z,w/F}$ can be written as 
\begin{align}
\varepsilon_0 = -1-\eta +(1-\eta-\eta^2) \sqrt{\eta} \in U_{\mathcal O,z,w/F}. 
\end{align}
\end{enumerate}
This example gives an example in which the traditional $k$-th convergent $\bm x^{trad}_k$ does not converges to $\alpha=0$. 
Indeed, by (\ref{conv trad2}), we see
\begin{align}
\bm x^{trad}_{3k-1}=((g_7^3g_2)(g_7^{-2}g_2)(g_7^3g_2))^k \infty =\infty \nrightarrow 0.
\end{align}
\end{ex}

\begin{ex}\label{ex1}
\begin{enumerate}[(I)]
\item Input: $\alpha=(1-\eta^2)\sqrt{\eta} +\sqrt{1+3\eta-2\eta^2}$, $\beta=(1-\eta^2)\sqrt{\eta} -\sqrt{1+3\eta-2\eta^2}$, $\varpi:= \varpi_{\beta \to \alpha}$. 
\item[(I)$'$] The corresponding data: $z=(1-\eta^2)\sqrt{\eta}$, $w=1$, $D_{z,w}=4(1+3\eta-2\eta^2)$. \\ 
The associated quadratic extension: $K_{z,w} \simeq F(\sqrt{1+3\eta-2\eta^2})$. 
\item The geodesic continued fraction expansion:
\begin{align}
\varpi = \llbracket 1; \overline{1, -1} \rrbracket_{\Delta(2,3,7)}.
\end{align}
\item From the period, we obtain the following fundamental unit $\gamma_0$, i.e., a generator of $\mathcal O_{z,w}^1 = \Gamma_{\varpi_{\beta \to \alpha}}$ up to $\pm 1$: 
\begin{align}
\gamma_0 &:=B_2B_0^{-1}=(g_7g_2)(g_7^{-1}g_2) \\
&= \frac{1}{2}
\begin{pmatrix}
-1+2 \theta-\theta^2 - \theta^5&-1\\
2-\theta^2 -\theta^4&-1-2\theta-\theta^2 +\theta^5 \\
\end{pmatrix}\\
&= \frac{1}{2}
\begin{pmatrix}
 -(1+\eta)+ (2-\eta^2)\sqrt{\eta}&-1\\
2-\eta-\eta^2& -(1+\eta)- (2-\eta^2)\sqrt{\eta} \\
\end{pmatrix}.
\end{align}
Under the identification (\ref{ev map}), the fundamental unit $\varepsilon_0=\rho_{\alpha}(\gamma_0)$ of $U_{\mathcal O,z,w/F}$ can be written as 
\begin{align}
\varepsilon_0 
&=\frac{1}{2}((2-\eta-\eta^2)\alpha + -(1+\eta)- (2-\eta^2)\sqrt{\eta} )\\
&= -\frac{1}{2}\left(1+\eta +\sqrt{\eta^2+2\eta-3}\right) \in U_{\mathcal O,z,w/F}.
\end{align}
\end{enumerate}
By Corollary \ref{conv cor2}, the traditional $k$-th convergent of the following formal continued fraction converges to $(1-\eta^2)\sqrt{\eta} +\sqrt{1+3\eta-2\eta^2}$. 
\begin{align}
(1-\eta^2)\sqrt{\eta} +\sqrt{1+3\eta-2\eta^2} =\bm a_{1}-\cfrac{\bm b_{1}}{\bm c_{1}+\bm a_{-1}-\cfrac{\bm b_{-1}}{\bm c_{-1}+\bm a_{1}-\cfrac{\bm b_{1}}{\bm c_{1}+\bm a_{-1}-\cfrac{\bm b_{-1}}{\bm c_{-1}+\cdots}}}}.
\end{align}
Therefore, we can simplify this continued fraction using the formulas (\ref{eqn a1}), and obtain the continued fraction (\ref{eqn ex1}) presented in Section \ref{intro}.
\end{ex}

\begin{ex}\label{ex1.5} We give an example of ``$\beta$-free'' variant of Example \ref{ex1}
\begin{enumerate}[(I)]
\item Input: $\alpha=(1-\eta^2)\sqrt{\eta} +\sqrt{1+3\eta-2\eta^2}$, $\beta=-1$, $\varpi:= \varpi_{\beta \to \alpha}$. 
\item[(I)$'$] The corresponding data: $z=(1-\eta^2)\sqrt{\eta}$, $w=1$, $D_{z,w}=4(1+3\eta-2\eta^2)$. \\ 
The associated quadratic extension: $K_{z,w} \simeq F(\sqrt{1+3\eta-2\eta^2})$. 
\item The geodesic continued fraction expansion:
\begin{align}
\varpi = \llbracket g_7^{-1}; 3,2,\overline{-1, 1} \rrbracket_{\Delta(2,3,7)}.
\end{align}
\item From the period, we obtain the following fundamental units $\gamma_0 \in \mathcal O_{z,w}^1$ and $\varepsilon_0=\rho_{\alpha}(\gamma_0) \in U_{\mathcal O,z,w/F}$, which agrees with the result in Example \ref{ex1}.
\begin{align}
\gamma_0 &:=B_4B_2^{-1} \\
&= \frac{1}{2}
\begin{pmatrix}
 -(1+\eta)+ (2-\eta^2)\sqrt{\eta}&-1\\
2-\eta-\eta^2& -(1+\eta)- (2-\eta^2)\sqrt{\eta} \\
\end{pmatrix} \\
\varepsilon_0 
&= -\frac{1}{2}\left(1+\eta +\sqrt{\eta^2+2\eta-3}\right) \in U_{\mathcal O,z,w/F}.
\end{align} 
\end{enumerate}
\end{ex}

\begin{ex}\label{ex2}
\begin{enumerate}[(I)]
\item Input: $\alpha=\sqrt{\eta} +\sqrt{2\eta}$, $\beta=\sqrt{\eta} -\sqrt{2\eta}$, $\varpi:= \varpi_{\beta \to \alpha}$. 
\item[(I)$'$] The corresponding data: $z=\sqrt{\eta}$, $w=1$, $D_{z,w}=8\eta$. \\ 
The associated quadratic extension: $K_{z,w} \simeq F(\sqrt{2\eta})$. 
\item The geodesic continued fraction expansion:
\begin{align}
\varpi = \llbracket g_2; \overline{-2, 3,-3,3,-2,2,-3,3,-3,2} \rrbracket_{\Delta(2,3,7)}.
\end{align}
\item From the period, we obtain the following fundamental unit $\gamma_0$, i.e., a generator of $\mathcal O_{z,w}^1 = \Gamma_{\varpi_{\beta \to \alpha}}$ up to $\pm 1$: 
{\footnotesize
\begin{align}
\gamma_0 &:=B_{10}B_0^{-1} \\
&= 
\begin{pmatrix}
-11 - 6 \theta - 28 \theta^2 - 18 \theta^3 - 12 \theta^4 - 8 \theta^5&-8 - 22 \theta^2 - 10 \theta^4\\
-6 - 18 \theta^2 - 8 \theta^4&-11 + 6 \theta - 28 \theta^2 + 18 \theta^3 - 12 \theta^4 + 8 \theta^5 \\
\end{pmatrix}\\
&= 
\begin{pmatrix}
 -(11+28\eta+12 \eta^2)- (6+18\eta+8\eta^2)\sqrt{\eta}&-8-22\eta-10\eta^2\\
-6-18\eta-8\eta^2& -(11+28\eta+12 \eta^2)+ (6+18\eta+8\eta^2)\sqrt{\eta} \\
\end{pmatrix}.
\end{align}
}
Under the identification (\ref{ev map}), the fundamental unit $\varepsilon_0=\rho_{\alpha}(\gamma_0)$ of $U_{\mathcal O,z,w/F}$ can be written as 
\begin{align}
\varepsilon_0 
= -11-28\eta-12 \eta^2 - (6+18\eta+8\eta^2)\sqrt{2\eta} \in U_{\mathcal O,z,w/F}.
\end{align}
\end{enumerate}
\end{ex}

\begin{ex}\label{ex3}
\begin{enumerate}[(I)]
\item Input: $\alpha=2 +\sqrt{4-\eta}$, $\beta=2 -\sqrt{4-\eta}$, $\varpi:= \varpi_{\beta \to \alpha}$. 
\item[(I)$'$] The corresponding data: $z=2\sqrt{\eta}$, $w=\sqrt{\eta}$, $D_{z,w}=4\eta(4-\eta)$. \\ 
The associated quadratic extension: $K_{z,w} \simeq F(\sqrt{4\eta-\eta^2})$. 
\item The geodesic continued fraction expansion:
\begin{align}
\varpi = \llbracket g_7g_2g_7^{-1}; \overline{3,3,-2,2,-3,3,-3,3,-3,2,-2,3} \rrbracket_{\Delta(2,3,7)}.
\end{align}
\item By computing the period $\gamma_0 :=B_{10}B_0^{-1} \in \mathcal O_{z,w}^1$ we obtain the following fundamental unit $\varepsilon_0=\rho_{\alpha}(\gamma_0)$ of $U_{\mathcal O,z,w/F}$:  
\begin{align}
\varepsilon_0 
=-(28 + 80 \eta + 36 \eta^2)-(16 + 43 \eta + 19 \eta^2)\sqrt{4\eta - \eta^2}  \in U_{\mathcal O,z,w/F}.
\end{align}
\end{enumerate}
\end{ex}

\begin{ex}\label{ex4}
\begin{enumerate}[(I)]
\item Input: $\alpha=e$, $\beta=1/e$, $\varpi:= \varpi_{\beta \to \alpha}$, where $e$ is Euler's numebr.  
\item The geodesic continued fraction expansion:
\begin{align}
\varpi = \llbracket g_7^2g_2g_7^{-2};
&3,3,-3,-3,3,-3,3,-3,-3,2,-2,2,-3,3,\nonumber\\
&-2,3,2,-2,3,-3,-3,2,-2,2,-2,3,-3,\nonumber\\
&\quad \quad 2,-2,2,-2,3,2,-1,2,3,-3,-2,1,-1,\dots \rrbracket_{\Delta(2,3,7)}.
\end{align}
\end{enumerate}
The regularized $40$-th convergent $\bm x^{reg}_{40}$ is approximately $\bm x^{reg}_{40} \fallingdotseq 2.7182818284590431$, where $e$ is approximately $e \fallingdotseq 2.7182818284590452$.
\end{ex}

\subsection*{Acknowledgements}
I would like to express my deepest gratitude to Takeshi Tsuji for the constant encouragement and valuable comments during the study. 
The study in this paper was initiated during my stay at the Max Planck Institute for Mathematics in 2018. 
I would like to express my appreciation to Don Zagier for giving me the wonderful opportunity to study at the institute.
I would like to thank every staff of the Max Planck Institute for Mathematics for their warm and kind hospitality during my stay. 
This work is supported by JSPS Overseas Challenge Program for Young Researchers Grant Number 201780267 and JSPS KAKENHI Grant Number JP18J12744.

{\small
\bibliographystyle{alpha}
\addcontentsline{toc}{part}{References}

\bibliography{gcf_shimura}
}

\end{document}